\numberwithin{equation}{section}
\theoremstyle{plain}
\newtheorem{theorem}{Theorem}[section]
\newtheorem{proposition}[theorem]{Proposition}
\newtheorem{lemma}[theorem]{Lemma}
\newtheorem{corollary}[theorem]{Corollary}
\theoremstyle{definition}
\newtheorem{example}{Example}[section]
\theoremstyle{remark}
\newtheorem{remark}[theorem]{Remark}
\title[Connections between nonlocal operators]{
Connections between nonlocal operators: \\ from vector calculus identities to a fractional Helmholtz decomposition}
\author{Marta D'Elia}
\address[Marta D'Elia]{Data Science and Computing group, Sandia National Laboratories, CA}
\email{mdelia@sandia.gov}
\author{Mamikon Gulian}
\address[Mamikon Gulian]{Data Science and Computing group, Sandia National Laboratories, CA}
\email{mgulian@sandia.gov}
\author{Tadele Mengesha}
\address[Tadele Mengesha]{Department of Mathematics,
University of Tennessee,
Knoxville, TN}
\email{mengesha@utk.edu}
\author{James M. Scott}
\address[James M. Scott]{Applied Physics and Applied Mathematics,
Columbia University, NY}
\email{jms2555@columbia.edu}
\newcommand{\xb}{\mathbf{x}}
\newcommand{\yb}{\mathbf{y}}
\newcommand{\zb}{\mathbf{z}}
\newcommand{\ub}{\mathbf{u}}
\newcommand{\vb}{\mathbf{v}}
\newcommand{\ab}{\mathbf{a}}
\newcommand{\cb}{\mathbf{c}}
\newcommand{\hb}{\mathbf{h}}
\begin{document}

\maketitle

\begin{abstract}
Nonlocal vector calculus, which is based on the nonlocal forms of gradient, divergence, and Laplace operators in multiple dimensions, has shown promising applications in fields such as hydrology, mechanics, and image processing. 
In this work, we study the analytical underpinnings of these operators. We rigorously treat compositions of nonlocal operators, prove nonlocal vector calculus identities, and connect weighted and unweighted variational frameworks. We combine these results to obtain a weighted fractional Helmholtz decomposition which is valid for sufficiently smooth vector fields. Our approach identifies the function spaces in which the stated identities and decompositions hold, providing a rigorous foundation to the nonlocal vector calculus identities that can serve as tools for nonlocal modeling in higher dimensions.\end{abstract}

\section{Introduction}

Nonlocal operators are operators whose functional values are determined by integration over a neighborhood, in contrast to differential operators which are locally determined. The integral nature of these operators allows them to describe multiscale behavior and anomalous behavior such as super- and sub-diffusion. This feature makes nonlocal models a viable alternative to models based on partial differential equations (PDEs) for a broad class of engineering and scientific applications. Such applications include in groundwater hydrology for subsurface transport \citep{Benson2000,d2021analysis,Schumer2003,Schumer2001}, image processing \citep{Buades2010,Gilboa2007,DElia2021Imaging}, multiscale and multiphysics systems \citep{Alali2012,Du-Lipton-Mengesha,Askari2008},
finance \citep{Scalas2000,Sabatelli2002}, and stochastic processes \citep{Burch2014,DElia2017,Meerschaert2012,Metzler2000,Metzler2004}.

The foundations of nonlocal vector calculus, based on the nonlocal gradient, divergence, and Laplace operators in multiple dimensions, were developed by \citet{Gunzburger2010}, \citet{Du2012}, \citet{du2013analysis}, and \cite{Du2013}. In these works, two frameworks were introduced: an \emph{unweighted} framework and a \emph{weighted} framework. The unweighted framework involves the two-point gradient $\cG \ub(\xb,\yb)$ and its adjoint, the nonlocal divergence operator $\cD \ub(\xb)$, the composition of which yields a nonlocal Laplace operator.
The weighted framework is based on the one-point weighted gradient $\cG_\varrho \ub(\xb)$ and its adjoint $\cD_\varrho \ub(\xb)$, the weighted divergence. The one-point structure characterizing these weighted operators makes them more amenable for certain applications; see \cite{Du2018Dirichlet,lee2020nonlocal} for applications to mechanics and \cite{Du2020SPH} for an application to fluid dynamics. Rigorous analysis of important aspects pertaining to these operators was performed by \cite{Mengesha-Spector} and \citet{MENGESHA201682}. 

Various forms of fractional-order (hereafter referred to as \emph{fractional}) vector calculus have been developed both independently and in parallel; see for instance \citet{Meerschaert2006}, \cite{silhavy2020fractional}, and \citet{Tarasov2008}. \citet{d2020unified} showed that a widely used form of fractional-vector calculus is in fact a special case of the weighted nonlocal vector calculus with singular weight function $\varrho$ and infinite interaction radius. In particular, it was noted that the fractional gradient and divergence are special cases of weighted nonlocal operators. Moreover, despite the fractional Laplacian having an immediate representation as $\mathcal{D} \circ \mathcal{G}$, a composition of unweighted operators, it was also shown to be represented as $\mathcal{D}_\varrho \circ \mathcal{G}_\varrho$, a composition of the weighted fractional divergence and gradient. 
This representation of weighted Laplace operators as unweighted diffusion operators was formally extended to the more general kernel-based  nonlocal calculus in \cite{d2020unified} by deriving an associated {\it equivalence kernel.}
This result reinforces ideas discovered in prior studies on equivalence kernels by \citet{Alali-preprint,mengesha2014peridynamic}, and \citet{vsilhavy2017higher}  in the context of peridynamics, a nonlocal model of mechanics in which the nonlocal Navier-Lam\'e operator is represented as a nonlocal Laplace-type operator. 

The aforementioned representations clearly show the major role that composition of operators play in deriving useful nonlocal vector calculus identities. 
A major contribution of this work is the rigorous justification of various representations of compositions of weighted nonlocal operators.
We provide conditions under which the composition of two nonlocal operators defined by principal value integrals, such as the fractional divergence and gradient, can be represented by a double principal value integral. These analytical results are utilized together with classical vector calculus identities to prove several identities for weighted nonlocal vector operators, such as 
\begin{equation*}
\cC_\omega \circ \cG_\omega = 0, \quad 
\cD_\omega \circ \cC_\omega = 0, \quad 
\cC_\omega \circ \cC_\omega = 
\cG_\omega \circ \cD_\omega 
- \cD_\omega \circ \cG_\omega
\end{equation*}
for translation invariant kernels, including fractional kernels.  We specify the space of functions over which such composition is possible. 

Another contribution is a \emph{rigorous} proof of the equivalence of weighted and unweighted nonlocal Laplace operators via the equivalence kernel. 
While this result was presented in \citet{d2020unified} formally, here we  provide a set of conditions under which the result is valid. We verify these conditions for several important classes of kernels, including fractional kernels. We further study the properties of the equivalence kernel, which are important for establishing well-posedness for weighted nonlocal models. 

Finally, we combine our results to obtain a weighted fractional Helmholtz decomposition in H\"older spaces. This result utilizes the vector calculus identities proved in the first half of the paper, as well as the characterization of the equivalence kernel for fractional kernels. A nonlocal Helmholtz decomposition for unweighted operators was derived by \cite{DElia2020Helmholtz}.
For weighted nonlocal operators, a Helmholtz decomposition for operators with kernels supported in the half ball was derived by \citet{lee2020nonlocal}. Their results bear resemblance to the Helmholtz decompositions derived in the present paper,  but in a different setting, namely in a periodic domain and for nonlocal kernels that scale in certain limits to local operators. In contrast, we study such decompositions in $\mathbb{R}^d$ with relaxed assumptions about the decay at infinity, which hold for standard fractional operators. 
In another related work, \citet{petronela_poster} studied Helmholtz decompositions for nonlocal convolution operators. 

\noindent{\it Outline of the paper.} 
In Section \ref{sec:OperatorDef}, {we introduce our notation and recall several relevant results for nonlocal operators in multiple dimensions with well-known kernels. We establish basic mapping properties of these operators as well. In Section \ref{sec:holder}, we focus on fractional operators and characterize their mapping properties completely for several function spaces.} In Section \ref{sec:identities}, we prove several nonlocal operator identities that reflect well-established local counterparts from classical vector calculus. In Section \ref{sec:eq-kernel}, we identify a specific class of functions for which there exists an equivalence kernel such that the composition of the divergence and gradient operators corresponds to the (unweighted) negative nonlocal Laplace operator. Finally, in Section \ref{sec:helmholtz} we combine the vector calculus identities and the characterization of the equivalence kernel to obtain a weighted fractional Helmholtz decomposition. We collect requisite properties of the hypergeometric function in Appendix \ref{apdx:HyperGeometricFxn}.

\section{Definitions of Operators}\label{sec:OperatorDef}

In this section, we recall the definitions of the nonlocal operators that will be used throughout the paper and identify function spaces for which these operators are defined. Furthermore, we introduce examples of nonlocal kernel functions that will be utilized in our main results. 

Suppose we have a radial kernel $\varrho$ satisfying 
\begin{equation}\label{assumption:Kernel}
    \begin{gathered}
        \varrho \in L^1_{\text{loc}}(\bbR^d)\,, \qquad \varrho \geq 0\,, \qquad \frac{\varrho(\bseta)}{|\bseta|} \text{ is nonincreasing in } |\bseta|\,, \\
        \text{ and } \intdm{\bbR^d}{ \min \{1,|\bseta|^{-1} \} \varrho(\bseta) }{\bseta} < \infty\,.
        \tag{K}
    \end{gathered}
\end{equation}
Given positive integers $N, d$ and a given vector field $\ub: \mathbb{R}^d \rightarrow \mathbb{R}^N$ we define the nonlocal gradient 
\begin{align}\label{eq:definitions_of_ops}
\cG_{\varrho}\ub(\xb) &= \int_{\bbR^d} \varrho (\yb-\xb)  \frac{\ub(\yb)-\ub(\xb)}{|\by-\bx|} \otimes \frac{\by-\bx}{|\by-\bx|}  \, \rmd\yb 
\end{align}
where for any vector ${\bf a}\in \bbR^N$ and ${\bf b}\in \bbR^d$, the tensor product ${\bf a}\otimes {\bfb}$ is the $N \times d$ matrix the $ij^{\text{th}}$ entry of which is $a_ib_j$. From this definition, it is clear that $\cG_{\varrho}$ is an $N \times d$ matrix-valued map. 
For a given second order tensor field $\ub : \mathbb{R}^d \rightarrow \mathbb{R}^{N \times d}$, we define the nonlocal divergence as 
\begin{equation}\label{eq:div-tensor}
\cD_{\varrho}\ub(\xb) = \int_{\bbR^d} \varrho (\yb-\xb) \frac{\ub(\yb)-\ub(\xb)}{|\by-\bx|} \frac{\by-\bx}{|\by-\bx|} \, \rmd \yb.
\end{equation}
We interpret $\mathbb{R}^{N \times d}$  as a set of matrices with $N$ rows and $d$ columns.
It is clear from the above definition \eqref{eq:div-tensor} that $\cD_{\varrho} \bu(\bx)$ is $\bbR^N$-valued.
Finally, in the event $d=N=3$ and $\ub:\bbR^3\to\bbR^3$ we define the nonlocal curl operator as 
\begin{align}\label{eq:curl-vector}
\cC_{\varrho}\ub(\xb) = \int_{\bbR^3} \varrho (\yb-\xb) \frac{\by-\bx}{|\by-\bx|} \times \frac{\ub(\yb)-\ub(\xb)}{|\by-\bx|} \, \rmd\yb.
\end{align}
The definitions above are consistent with known corresponding definitions for scalar fields or vector fields given in, e.g., \cite{Du2013}. Indeed, in the event that $u:\bbR^d\to \bbR$ is a scalar field, the nonlocal gradient operator acting on $u$ gives the vector field $\cG_{\varrho} u$ where 
\begin{equation*}
\cG_{\varrho} u(\xb) = \int_{\bbR^d} \varrho (\yb-\xb)  \frac{u(\yb)-u(\xb)}{|\by-\bx|} \frac{\by-\bx} {|\by-\bx|}  \, \rmd\yb.
\end{equation*}
If we identify vector fields $\ub: \bbR^d\to \bbR^d$ as $1\times d$ matrix-valued fields, then the nonlocal divergence operator acting on the vector field is the scalar function given by 
\begin{align*}
\cD_{\varrho}\ub(\xb) &= \int_{\bbR^d} \varrho (\yb-\xb) \frac{\by-\bx}{|\by-\bx|} \cdot \frac{\ub(\yb)-\ub(\xb)}{|\by-\bx|} \, \rmd\yb. 
\end{align*}
{Note that in the literature on nonlocal vector calculus (see, e.g, \cite{Du2013}) these operators are commonly referred to as {\it weighted}, as opposed to their unweighted counterparts.} 

{
In what follows, we denote the $i$-th partial derivative by $D_i$, and we use multi-index notation for derivatives of arbitrary order. The (classical) gradient operator $(D_1, D_2, \ldots, D_n)$ is denoted by $\grad$, its negative adjoint, the (classical) divergence operator, is denoted by $\div$, and their composition, the (classical) Laplacian $\div \grad$, is denoted by $\Delta$.

For functions $\bu : \bbR^d \to \bbR^N$, for $k \in \bbN_0$ and $\alpha \in (0,1)$ we denote the H\"older spaces as $C^{k,\alpha}(\bbR^d;\bbR^N) := \{ \bu \in C(\bbR^d;\bbR^N) \, : \, \Vnorm{\bu}_{C^{k,\alpha}(\bbR^d)} < \infty \}$, where the norm is given by
\begin{gather*}
\| \ub \|_{C^{k,\alpha}(\mathbb{R}^d)} 
= \Vnorm{\bu}_{L^{\infty}(\bbR^d)} +
\sum_{|\gamma|=1}^k \Vnorm{ D^{\gamma} \ub }_{L^\infty(\mathbb{R}^d)}
+
\sum_{|\gamma|=k} [ D^{\gamma} \ub ]_{C^{0,\alpha}(\mathbb{R}^d)}, \\
\left[ \vb \right]_{C^{0,\alpha}(\mathbb{R}^d)}
=
\underset{\xb,\yb \in \mathbb{R}^d, \xb \neq \yb}{\text{sup}}
\frac{|\vb(\xb) - \vb(\yb)|}{|\xb-\yb|^{\alpha}}.
\end{gather*}

Later on in the paper we will need to consider functions in a range of H\"older spaces that depend on a parameter $s$. For all $s \in (0,1)$ and $\sigma > 0$ small, we say that 
\begin{equation*}
    \bu \in \scC^{2s+\sigma}(\bbR^d;\bbR^N)
\end{equation*}
if
\begin{equation*}
    \bu \in 
    \begin{cases}
    C^{0,2s+\sigma}(\bbR^d;\bbR^N)\,, &\quad \text{ when } s < 1/2\,, \\
        C^{1,2s+\sigma-1}(\bbR^d;\bbR^N)\,, &\quad \text{ when } s \geq 1/2\,.
    \end{cases}
\end{equation*}
}

We next study the mapping properties of the nonlocal operators introduced above. To that end,  we recall the class of Schwartz vector fields by $\scS(\bbR^d;\bbR^N)$: this is the space $C^{\infty}(\bbR^d;\bbR^N)$ equipped with the countable family of seminorms
\begin{equation*}
    [\bu]_{\alpha,\beta} := \sup_{|\gamma|\leq \alpha} \sup_{\bx \in \bbR^d} |\bx|^{\beta} |D^{\gamma}\bu(\bx)|\,, \qquad \alpha, \beta \in \bbN_0\,, \quad \gamma \text{ a } d\text{-multi-index.}
\end{equation*}

Our next result says that although these nonlocal operators do not necessarily map $\scS(\bbR^d;\bbR^N)$ to itself, they map it to the class $C^{\infty}$, and the mapped vector fields satisfy a certain decay property. 

\begin{proposition}\label{decay-estimates-operator}
Let $d$ and $N$ be positive integers. Let $\cZ_{\varrho} \bu(\bx)$ denote any one of the following objects:
\begin{equation}\label{eq:OperatorAbbreviations}
    \begin{split}
        \cG_{\varrho} \bu(\bx)\,, &\quad \text{ for } \bu \in \scS(\bbR^d;\bbR^N)\,,  \\
        \cD_{\varrho} \bu(\bx)\,, &\quad \text{ for } \bu \in \scS(\bbR^d;\bbR^{N \times d})\,,  \\
        \cC_{\varrho} \bu(\bx)\,, &\quad \text{ for } \bu \in \scS(\bbR^d;\bbR^d) \text{ and } d=3\,.
    \end{split}
\end{equation}
Then $\cZ_{\varrho} \bu(\bx)$ is a well-defined measurable function for all $\bx$, $\cZ_{\varrho} \bu \in C^{\infty}$, and for any $p\in [1, \infty]$ and $\gamma \in \bbN^d$, there is a constant $C$ depending on $d$, $N$ and $p$ such that  
\begin{equation}\label{eq:Lp-NonlocalOperator}
\|D^{\gamma}\cZ_{\varrho}\bu\|_{L^{p}(\bbR^d)} \leq C\left(\|\grad D^{\gamma}\bu\|_{L^{p}(\bbR^d)}\|\varrho\|_{L^{1}(B_1(0))} + \|D^{\gamma}\bu\|_{L^{p}(\bbR^d)}\left\|{\varrho(\cdot)\over |\cdot|}\right\|_{L^1 (\bbR^d\setminus B_1(0))}\right).
\end{equation}
Moreover, we have the following decay estimates for the derivatives: for any $j$, $k \in \bbN$ , there exists a constant $C$ depending on $d$, $N$, $j$ and $k$ such that 
\begin{equation}\label{eq:DecayRate}
    \begin{split}
        |D^{\gamma}\cZ_{\varrho}\bu(\bx)| \leq C \left( \frac{[\bu]_{|\gamma|+1,j}}{|\bx|^j} \intdm{|\bh| \leq \frac{|\bx|}{2} }{ \varrho(|\bh|)}{\bh} + \frac{[\bu]_{|\gamma|,k}}{|\bx|^k} \intdm{|\bh| > \frac{|\bx|}{2} }{ \frac{\varrho(|\bh|)}{|\bh|}  }{\bh} + \|D^{\gamma}\bu\|_{L^{1}(\bbR^d)}\frac{ \varrho\left( \frac{|\bx|}{2} \right)}{|\frac{\bx}{2}|}   \right)
    \end{split}
\end{equation}
for all $|\bx| \geq 1$.
\end{proposition}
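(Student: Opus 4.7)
The plan is to control the nonlocal operators $\cZ_\varrho$ by the scalar integrand $\varrho(\bh)\,|\bu(\bx+\bh)-\bu(\bx)|/|\bh|$, since in each case the tensorial or cross-product factors involving the unit vector $(\by-\bx)/|\by-\bx|$ have operator norm at most $1$. After the change of variables $\bh = \by - \bx$, every estimate reduces to studying
\[
I(\bx) \;=\; \int_{\bbR^d} \varrho(\bh)\,\frac{|\bu(\bx+\bh) - \bu(\bx)|}{|\bh|}\,\rmd\bh.
\]

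First I would split $I(\bx)$ into integrals over $|\bh|\le 1$ and $|\bh|>1$. On the inner region the mean value theorem gives $|\bu(\bx+\bh)-\bu(\bx)|/|\bh|\le \|\grad\bu\|_{L^\infty}$, while on the outer region it is bounded by $2\|\bu\|_{L^\infty}/|\bh|$. Since both $\varrho\in L^1(B_1)$ and $\varrho(\cdot)/|\cdot|\in L^1(\bbR^d\setminus B_1)$ by assumption \eqref{assumption:Kernel}, this proves pointwise well-definedness and gives the $L^\infty$ case of \eqref{eq:Lp-NonlocalOperator}. For general $p$ I would apply Minkowski's integral inequality to move $\|\cdot\|_{L^p(\rmd\bx)}$ inside the $\bh$-integral, using $\|\bu(\cdot+\bh)-\bu(\cdot)\|_{L^p}\le |\bh|\,\|\grad\bu\|_{L^p}$ for $|\bh|\le 1$ and $\le 2\|\bu\|_{L^p}$ for $|\bh|>1$. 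Since $\cZ_\varrho$ commutes with translations, differentiating under the integral (justified by the same dominating bound applied to $D^\gamma\bu\in\scS$) yields $D^\gamma\cZ_\varrho\bu = \cZ_\varrho D^\gamma\bu$; this simultaneously shows $\cZ_\varrho\bu\in C^\infty$ and upgrades \eqref{eq:Lp-NonlocalOperator} to all orders $\gamma$.

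For the decay estimate \eqref{eq:DecayRate} with $|\bx|\ge 1$, I split the $\bh$-integral at the scale $|\bh|=|\bx|/2$. On $\{|\bh|\le |\bx|/2\}$ the mean value theorem applied to $D^\gamma\bu$ gives $|D^\gamma\bu(\bx+\bh)-D^\gamma\bu(\bx)|/|\bh|\le \sup_{t\in[0,1]}|\grad D^\gamma\bu(\bx+t\bh)|$; the lower bound $|\bx+t\bh|\ge |\bx|/2$ together with the Schwartz seminorm $[\bu]_{|\gamma|+1,j}$ yields $2^j [\bu]_{|\gamma|+1,j}/|\bx|^j$, so integrating against $\varrho(\bh)\,\rmd\bh$ produces the first term in \eqref{eq:DecayRate}. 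On $\{|\bh|>|\bx|/2\}$ I use the triangle inequality $|D^\gamma\bu(\bx+\bh)-D^\gamma\bu(\bx)|\le |D^\gamma\bu(\bx+\bh)|+|D^\gamma\bu(\bx)|$. The $|D^\gamma\bu(\bx)|$-piece is bounded by $[\bu]_{|\gamma|,k}/|\bx|^k$ and pulled out, leaving $\int_{|\bh|>|\bx|/2}\varrho(\bh)/|\bh|\,\rmd\bh$, which is the second term. For the $|D^\gamma\bu(\bx+\bh)|$-piece I change variables $\by=\bx+\bh$, giving
\[
\int_{|\by-\bx|>|\bx|/2} \frac{\varrho(\by-\bx)}{|\by-\bx|}\,|D^\gamma\bu(\by)|\,\rmd\by,
\]
and invoke the monotonicity of $\varrho(\bseta)/|\bseta|$ in \eqref{assumption:Kernel} to estimate $\varrho(\by-\bx)/|\by-\bx|\le \varrho(|\bx|/2)/(|\bx|/2)$ uniformly; pulling this constant out and integrating $|D^\gamma\bu|$ over $\bbR^d$ gives the third term.

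The main bookkeeping obstacle is the far-field analysis: the difference $D^\gamma\bu(\bx+\bh)-D^\gamma\bu(\bx)$ must be split so that each of the three resulting contributions matches precisely one term in \eqref{eq:DecayRate}. The monotonicity clause in \eqref{assumption:Kernel} is used in exactly one step---extracting the pointwise factor $\varrho(|\bx|/2)/(|\bx|/2)$---and it is this step that dictates the form of the third term; without monotonicity one would only obtain a weaker bound in terms of $\|\varrho/|\cdot|\|_{L^\infty}$ off the origin.
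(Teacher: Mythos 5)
Your proposal is correct and follows essentially the same path as the paper's proof: reduce to the scalar integrand $\varrho(\bh)|\bu(\bx+\bh)-\bu(\bx)|/|\bh|$, split at $|\bh|=1$ for well-definedness and the $L^p$ bound via Minkowski's integral inequality, commute $D^\gamma$ with $\cZ_\varrho$ via dominated convergence for $C^\infty$-regularity, and split at $|\bh|=|\bx|/2$ for the decay estimate with the mean value theorem on the near piece and the monotonicity of $\varrho(\cdot)/|\cdot|$ on the far piece to extract the factor $\varrho(|\bx|/2)/(|\bx|/2)$. Your closing remark correctly pinpoints that the monotonicity hypothesis is used only once, exactly where the paper also invokes it.
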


\begin{proof}
First, we show that for $\bu \in \scS(\bbR^d)$, $\cZ_{\varrho} \bu(\bx)$ is well defined for any fixed $\bx \in \bbR^d$. From the definition of these operators and after change of variables, we notice that 
\begin{equation*}
   |\cZ_{\varrho} \bu(\bx)|\leq \intdm{\bbR^d}{\varrho(|\bz|)\frac{|\bu(\bx+\zb)-\bu(\bx)|}{|\zb|}}{\zb}.
\end{equation*}
Thus to show that $\cZ_{\varrho} \bu(\bx)$ is well defined, it suffices to show that the integrand on the right-hand side is integrable. This follows from the fact that the integrand can be estimated by a sum of two integrable functions, i.e. 
\begin{equation}\label{eq:L1Est1}
\varrho(|\bz|)\frac{|\bu(\bx+\zb)-\bu(\bx)|}{|\zb|} \leq \|\grad \bu\|_{L^{\infty}(B_1(\bx))} \varrho(\zb)\chi_{\{|\zb|\leq 1\}}(\zb) + 
2 \|\ub\|_{L^\infty(\mathbb{R}^d)} {\varrho(\zb)\over |\zb|}\chi_{\{|\zb|\geq 1\}}(\zb).
\end{equation}

The fact that $\cZ_{\varrho} \bu \in C^{\infty}(\bbR^d;\bbR^d)$ follows from the observation that the operators $\cZ_{\varrho}$ commute with differentiation for vector fields in $\scS(\bbR^d)$, i.e. for any multi-index $\gamma\in \bbN_0^d$, $D^\gamma \cZ_{\varrho} \bu = \cZ_{\varrho} D^{\gamma}\bu$. This commutative property follows by induction from the relation $D_i \cZ_{\varrho} \bu = \cZ_{\varrho} D_i \bu$ for $i = 1, \ldots, d$. This, in turn, can be seen by applying the estimates \eqref{eq:L1Est1} and 
\begin{multline}\label{eq:L1Est2}
    \varrho(|\bz|)\frac{|D_i \bu(\bx+\zb)- D_i \bu(\bx)|}{|\zb|}
    \\
    \leq \|\grad D_i \bu\|_{L^{\infty}(B_1(\bx))} \varrho(\zb)\chi_{\{|\zb|\leq 1\}}(\zb) + 
2 \|D_i \ub\|_{L^\infty(\mathbb{R}^d)} {\varrho(\zb)\over |\zb|}\chi_{\{|\zb|\geq 1\}}(\zb)
\end{multline}
in the dominated convergence theorem in order to differentiate under the integral sign. Thus to demonstrate the estimates \eqref{eq:Lp-NonlocalOperator}
and  \eqref{eq:DecayRate}, it suffices to check it for $\gamma =0$. To prove \eqref{eq:Lp-NonlocalOperator} when $1\leq p<\infty$, we have
\begin{multline}
\int_{\bbR^d}|\cZ_{\varrho}\bu|^p \,  \rmd \bx \leq C \int_{\bbR^d}\left|\int_{B_1(0)}\varrho(|\bz|)\frac{|\bu(\bx+\zb)-\bu(\bx)|}{|\zb|} \, \rmd \zb\right|^p \, \rmd \bx\\
+ C \int_{\bbR^d}\left|\int_{\bbR^{d}\setminus B_1(0) }\varrho(|\bz|)\frac{|\bu(\bx+\zb)-\bu(\bx)|}{|\zb|} \, \rmd \zb\right|^p \, \rmd\bx\,.    
\end{multline}
Using the identity $\bu(\bx+\zb)-\bu(\bx) = \int_{0}^{1}\grad \bu(\bx +t\zb)\zb \, \rmd t$ and applying Minkowski's integral inequality, we have that 
\begin{align*}
\int_{\bbR^d}|\cZ_{\varrho}\bu|^p \, \rmd \bx &\leq C \int_{\bbR^d}\left|\int_{B_1(0)}\varrho(|\bz|)\int_{0}^{1}|\grad \bu(\bx +t\zb)| \, \rmd t \, \rmd \zb\right|^p \, \rmd \bx\\
&\qquad +\int_{\bbR^d}\left|\int_{\bbR^{d}\setminus B_1(0) }\varrho(|\bz|)\frac{|\bu(\bx+\zb)-\bu(\bx)|}{|\zb|} \, \rmd\zb\right|^p \, \rmd\bx\\
&\leq C\left[\left(\int_{B_1(0)}\varrho(|\zb|) \, \rmd \zb\right)^p \|\grad \bu\|_{L^{p}(\bbR^d)} +\left( \int_{\bbR^{d}\setminus B_1(0) }{\varrho(|\bz|) \over |\zb|} \, \rmd \zb \right)^p \|\bu\|_{L^{p}(\bbR^d)}\right]\,.
\end{align*}
The estimate \eqref{eq:Lp-NonlocalOperator} in the case $p=\infty$ is similar:
\begin{align*}
\Vnorm{\cZ_{\varrho}\bu}_{L^{\infty}(\bbR^d)}
&\leq C\left[\left(\int_{B_1(0)}\varrho(|\zb|) \, \rmd \zb\right) \|\grad \bu\|_{L^{\infty}(\bbR^d)} +\left( \int_{\bbR^{d}\setminus B_1(0) }{\varrho(|\bz|) \over |\zb|} \, \rmd \zb \right) \|\bu\|_{L^{\infty}(\bbR^d)}\right]\,.
\end{align*}

To estimate \eqref{eq:DecayRate}, we proceed by splitting the integral defining $\cZ_{\varrho} \bu$:
\begin{equation*}
    \begin{split}
    |\cZ_{\varrho} \bu(\bx)| &\leq \intdm{|\zb|\leq \frac{|\bx|}{2} }{ \varrho(|\zb|) \frac{|\bu(\zb + \bx)-\bu(\bx)|}{|\zb|} }{\zb} + \intdm{|\zb| > \frac{|\bx|}{2} }{ \varrho(|\zb|) \frac{|\bu(\zb+\bx)-\bu(\bx)|}{|\zb|} }{\zb} \\
    &\leq \intdm{|\zb|\leq \frac{|\bx|}{2} }{ \varrho(|\zb|) \grad \bu(\bx +t(\zb) ) }{\by} + \intdm{|\zb| > \frac{|\bx|}{2} }{ \varrho(|\zb|) \frac{|\bu(\zb+\bx)|}{|\zb|} }{\zb} \\
    &\qquad + \intdm{|\zb| > \frac{|\bx|}{2} }{ \varrho(|\zb|) \frac{|\bu(\bx)|}{|\zb|} }{\zb}
    \end{split}
\end{equation*}
for some $t \in [0,1]$. We use the definition of $[\bu]_{\alpha,\beta}$ to estimate:
\begin{equation*}
    \begin{split}
    |\cZ_{\varrho} \bu(\bx)|
    &\leq \intdm{|\zb|\leq \frac{|\bx|}{2} }{ \varrho(|\zb|) \frac{[\bu]_{1,j} }{|\bx+t(\zb)|^j} }{\zb} + \intdm{|\zb| > \frac{|\bx|}{2} }{ \varrho(|\zb|) \frac{|\bu(\zb+\bx)|}{ |\zb|} }{\zb} \\
    &\qquad + \intdm{|\zb| > \frac{|\bx|}{2} }{ \varrho(|\zb|) \frac{[\bu]_{0,k}}{|\bx|^k |\zb|} }{\zb}\,.
    \end{split}
\end{equation*}
Since $\varrho(\bseta) |\bseta|^{-1}$ is nonincreasing and $|\bx + t\zb| \geq \frac{|\bx|}{2}$ for all $\zb \in B(0,{|\bx|\over 2})$,
\begin{equation*}
    \begin{split}
    |\cZ_{\varrho} \bu(\bx)|
    &\leq C(j) \frac{[\bu]_{1,j} }{|\bx|^j} \intdm{|\zb|\leq \frac{|\bx|}{2} }{ \varrho(|\zb|) }{\zb} + \frac{\varrho(\frac{|\bx|}{2})}{ \left( \frac{|\bx|}{2} \right) } \intdm{|\zb| > \frac{|\bx|}{2} }{ |\bu(\zb+\bx)| }{\zb} \\
    &\qquad +  \frac{[\bu]_{0,k}}{|\bx|^k} \intdm{|\zb| > \frac{|\bx|}{2} }{ \frac{\varrho(|\zb|)}{|\zb|}  }{\zb}\,.
    \end{split}
\end{equation*}
Changing coordinates gives \eqref{eq:DecayRate}.
\end{proof}
\begin{remark}
For $m \in \bbN$, denoting the class of $m$-times continuously differentiable and bounded vector fields on $\bbR^d$ by $C^m_b(\bbR^d;\bbR^d)$, the first part of the above proof implies that if $\bu \in C^{m}_b(\bbR^d;\bbR^d)$  then $\cZ_{\varrho}\bu(\bx)$ is well-defined for all $\bx \in \bbR^d$ and $\cZ_{\varrho} \bu \in C^{m-1}_b(\bbR^d;\bbR^d)$ with the estimate 
    \[
    \|D^{\gamma}\cZ_{\varrho} \bu\|_{L^{\infty}(\bbR^d)} \leq C \left[ \|\grad D^{\gamma}{\bu}\|_{L^{\infty}(\bbR^d)}\|\varrho\|_{L^{1}(B_1(0))} + \|D^{\gamma}\bu\|_{L^{\infty}(\bbR^d)}\left\|{\varrho(\cdot)\over |\cdot|}\right\|_{L^1 (\bbR^d\setminus B_1(0))} \right]
    \]
    for any $1\leq |\gamma|\leq m-1.$
\end{remark}

We conclude the section by giving three examples of kernels that satisfy \eqref{assumption:Kernel} and demonstrate the decay estimates in Proposition \ref{decay-estimates-operator}. We will refer to these examples in the sequel. 

\begin{example}\label{ex:frac_kernel}
\textbf{The Fractional Kernel}
For $s \in (0,1)$ define the \textit{fractional kernel}
\begin{equation}\label{eq:Def:FracKernel}
    \varrho_s(|\bseta|) := \frac{c_{d,s}}{|\bseta|^{d+s-1}}\,, \qquad  c_{d,s} := \frac{2^s \Gamma(\frac{d+s+1}{2})}{\pi^{d/2} \Gamma(\frac{1-s}{2})}\,,  \qquad \bseta \in \bbR^d \setminus \{ {\bf 0} \}\,.
\end{equation}
Choosing $j = d+1$ and $k = d$ in \eqref{eq:DecayRate}, and computing the integrals gives the decay rate
\begin{equation*}
    |\cZ_{\varrho_s} \bu(\bx)| \leq \frac{C}{|\bx|^{d+s}}\,.
\end{equation*}
where $C$ depends on $d$, $s$ and $\bu$.
\end{example}

\begin{example}
\textbf{The Truncated Fractional Kernel.} Let $\delta > 0$. Define the \textit{truncated fractional kernel} $\varrho_{s,\delta}$ by
\begin{equation*}
    \varrho_{s,\delta}(|\bseta|) := c_{d,s} \frac{\chi_{B({\bf 0},\delta)} (|\bseta|) }{|\bseta|^{d+s-1}}\,, \qquad \bseta \in \bbR^d \setminus \{ {\bf 0} \}\,.
\end{equation*}
Using the notation $\cZ_{s, \delta}$ for $\cZ_{\varrho_{s,\delta}}$, the estimate \eqref{eq:DecayRate} corresponding to this kernel becomes
\begin{equation}\label{decay-Truncated-frac-ker}
    \begin{split}
    |\cZ_{s,\delta} \bu(\bx)| &\leq \frac{C \delta^{1-s}}{|\bx|^j} \,, \qquad 2 \delta \leq |\bx|
    \end{split}
\end{equation}
for any $j \in \bbN$, where $C$ depends on $d$, $s$ and $\bu$.
\end{example}

\begin{example}
\textbf{The Tempered Fractional Kernel.}
Let $\alpha > 0$, and let $s \in (0,1)$. We define the \textit{tempered fractional kernel}
\begin{equation*}
    \varrho_{s,\text{temp}}(|\bseta|) := \frac{\rme^{-\alpha|\bseta|}}{|\bseta|^{d+s-1}}\,, \qquad \bseta \in \bbR^d\,.
\end{equation*}
We abbreviate the operators $\cZ_{\varrho_{s,\text{temp}}}$ as $\cZ_{s,\text{temp}}$. The exponential decay of $\varrho_{s,\text{temp}}$ gives the resulting nonlocal derivatives rapid decay. To see this, we consider the three terms in \eqref{eq:DecayRate} separately. First, integrating directly we have
\begin{equation}\label{eq:DecayRate:TemperedFrac:Pf1}
     \frac{1}{|\bx|^j} \intdm{|\bh| \leq \frac{|\bx|}{2} }{ \frac{\rme^{-\alpha|\bseta|}}{|\bseta|^{d+s-1}} }{\bh} = \frac{\omega_{d-1}}{|\bx|^j} \int_0^{|\bx|/2} \frac{\rme^{-\alpha r}}{r^s} \, \rmd r \leq \frac{C(d,s,\alpha) \Gamma(1-s) }{|\bx|^j} \text{ for all } |\bx| \geq 1\,.
\end{equation}
Next, by change of coordinates
\begin{equation}\label{eq:DecayRate:TemperedFrac:Pf2}
    \begin{split}
     \frac{1}{|\bx|^k} \intdm{|\bh| > \frac{|\bx|}{2} }{ \frac{\rme^{-\alpha |\bh|}}{|\bh|^{d+s}}  }{\bh} = \frac{\omega_{d-1}}{|\bx|^k} \int_{\frac{|\bx|}{2} }^{\infty} \frac{\rme^{-\alpha r}}{r^{1+s}} \, \rmd r &= \frac{2^s \omega_{d-1}}{|\bx|^{k+s}} \int_{1}^{\infty} \frac{\rme^{-\frac{\alpha |\bx|}{2} r}}{r^{1+s}} \, \rmd r 
     \leq C \frac{\rme^{-\frac{\alpha |\bx|}{2} }}{|\bx|^{k+1+s}}\,,
    \end{split}
\end{equation}
where we have used the upper bound $\int_1^{\infty} t^{-n}\rme^{-zt} \, \rmd t \leq z^{-1} \rme^{-z}$ in the last inequality (see \cite[Equation 15.1.19]{abramowitz1988handbook}); here $C$ depends on $d$, $s$, $\alpha$ and $\bu$. 
Plugging estimates \eqref{eq:DecayRate:TemperedFrac:Pf1} and \eqref{eq:DecayRate:TemperedFrac:Pf2} in to  \eqref{eq:DecayRate} we arrive at
\begin{equation}\label{eq:DecayRate:TemperedFrac}
    |\cZ_{s,\text{temp}} \bu(\bx)| \leq  C \left( \frac{1}{|\bx|^j} +  \frac{\rme^{-\frac{\alpha |\bx|}{2} }}{|\bx|^{k+1+s}}  + \frac{\rme^{-\frac{\alpha |\bx|}{2}} }{|\bx|^{d+s}} \right) \,, \qquad |\bx| \geq 1\,, \qquad j\,, k \in \bbN\,.
\end{equation}
\end{example}

\begin{remark}
From the decay estimates \eqref{decay-Truncated-frac-ker} and \eqref{eq:DecayRate:TemperedFrac} corresponding to the truncated and the tempered fractional kernels, we see that $\cZ_{s, \delta}$ and $\cZ_{s,\text{temp}}$ map the Schwartz class of vector fields $\scS(\bbR^d)$ into itself. 
\end{remark}



\begin{example}
\textbf{The Characteristic Function Kernel.}
Let $\delta > 0$. We define the \textit{characteristic function kernel}
\begin{equation*}
    \varrho_{\chi,\delta}(|\bseta|) := \frac{d}{ \omega_{d-1} \delta^d} \chi_{B({\bf 0},\delta)}(|\bseta|)\,, \quad \bseta \in \bbR^d\,,
\end{equation*}
where $\omega_{d-1}$ denotes the surface measure of the sphere in $\bbR^d$. Using the notation $\cZ_{\chi,\delta}$ for $\cZ_{\varrho_{\chi,\delta}}$, the estimate \eqref{eq:DecayRate} corresponding to this kernel becomes
\begin{equation*}
    \begin{split}
    |\cZ_{\chi,\delta} \bu(\bx)| &\leq \frac{C}{|\bx|^j} \,, \qquad 2 \delta \leq |\bx|
    \end{split}
\end{equation*}
for any $j \in \bbN$, where $C$ depends on $d$ and $\bu$.
\end{example}

\section{H\"older spaces and fractional vector calculus}\label{sec:holder}
The mapping properties of the nonlocal operators $\cG_\varrho$, $\cD_\varrho$, and $\cC_\varrho$ depend on the kernel $\varrho$. In the case of the fractional kernel \eqref{eq:Def:FracKernel}, it is possible to characterize the mapping properties of these operators completely for several function spaces. We refer to the nonlocal gradient, divergence, and curl operators associated with the fractional kernel \eqref{eq:Def:FracKernel} as the fractional gradient, divergence, and curl, respectively, and identify them using the noation 
\begin{equation*}
    \cG_{s} := \cG_{\varrho_s}\,, \qquad \cD_{s} := \cD_{\varrho_s}\,, \qquad  \cC_{s} := \cC_{\varrho_s}\,. 
\end{equation*}
The mapping properties of $\cG_s$ and $\cD_s$ in fractional Sobolev spaces were established by \cite{d2020unified}, and are analogous to the well-known mapping property of the fractional Laplacian $(-\Delta)^s$ in Sobolev spaces \citep{lischke2018fractional, stein2016singular}. In this section, we study the mapping properties in H\"older spaces of these operators. The properties will be used in in Section \ref{sec:identities} to prove identities for fractional vector calculus operators in larger spaces than for general nonlocal operators, and in Section \ref{sec:helmholtz} in proving a Helmholtz decomposition involving fractional operators in H\"older spaces.  

We will define the fractional gradient operators for functions that satisfy appropriate smoothness and integrability conditions. For $\alpha \in (0,2)$, we define the weighted Lebesgue space $L^1_{\alpha}$ as
\begin{equation*}
    L^1_{\alpha}(\bbR^d;\bbR^d) := \left\{ \bu \in L^1_{loc}(\bbR^d;\bbR^d) \, : \, \Vnorm{\bu}_{L^1_{\alpha}(\bbR^d)} := \intdm{\bbR^d}{ \frac{|\bu(\bx)|}{1+|\bx|^{d+\alpha}} }{\bx} < \infty \right\}\,.
\end{equation*}
Note that for any $\alpha \in (0,2)$, $L^p(\bbR^d;\bbR^d) \subset L^1_{\alpha}(\bbR^d;\bbR^d)$ for $p \in [1,\infty]$.

\begin{theorem}\label{thm:MappingPropertiesOfOperators}
    Let $s \in (0,1)$, $N \geq 1$, and $d \geq 2$. Let $\cZ_s \bu(\bx)$ denote any of the following objects:
    \begin{equation*}
        \begin{split}
            \cG_s \bu(\bx)\,, &\quad \text{ for } \bu \in L^1_s(\bbR^d;\bbR^N)\,, \\
            \cD_s \bu(\bx)\,, &\quad \text{ for } \bu \in L^1_s(\bbR^d;\bbR^{N \times d})\,, \\
            \cC_s \bu(\bx)\,, &\quad \text{ for } \bu \in L^1_s(\bbR^d;\bbR^d) \text{ and } d =3\,.
        \end{split}
    \end{equation*}
    Then we have the following:
    \begin{itemize}
        \item[1)] If $\bu \in C^{0,\beta}(\bbR^d)$ for $\beta \in (s,1)$, then $\cZ_s \bu \in C^{0,\beta-s}(\bbR^d)$ with
        \begin{equation}\label{eq:NaturalEstimate1}
            \Vnorm{\cZ_s \bu}_{C^{0,\beta-s}(\bbR^d)}
            \leq
            C \Vnorm{\bu}_{C^{0,\beta}(\bbR^d)}\,.
        \end{equation}
        \item[2)] If $\bu \in C^{1,\beta}(\bbR^d)$ for $\beta \in (0,1)$ and $s < \beta$, then $\cZ_s \bu \in C^{1,\beta-s}(\bbR^d)$ with
        \begin{equation}\label{eq:NaturalEstimate2}
            \Vnorm{\cZ_s \bu}_{C^{1,\beta-s}(\bbR^d)}
            \leq
            C \Vnorm{\bu}_{C^{1,\beta}(\bbR^d)}\,.
        \end{equation}
        \item[3)] If $\bu \in C^{1,\beta}(\bbR^d)$ for $\beta \in (0,1)$ and $s > \beta$, then $\cZ_s \bu \in C^{0,\beta-s+1}(\bbR^d)$ with
        \begin{equation}\label{eq:NaturalEstimate3}
            \Vnorm{\cZ_s \bu}_{C^{0,\beta-s+1}(\bbR^d)}
            \leq
            C \Vnorm{\bu}_{C^{1,\beta}(\bbR^d)}\,.
        \end{equation}
    \end{itemize}
    In all estimates the constant $C$ depends only on $d$, $N$, $s$ and $\beta$.
\end{theorem}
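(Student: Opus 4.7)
The plan is to treat the three operators $\cG_s$, $\cD_s$, $\cC_s$ uniformly by observing that, with $\bz := \by - \bx$, each satisfies the pointwise bound
\[
|\cZ_s \bu(\bx)| \leq c_{d,s} \int_{\bbR^d} \frac{|\bu(\bx+\bz)-\bu(\bx)|}{|\bz|^{d+s}}\,\rmd\bz,
\]
since the tensor factors have operator norm at most $1$. The analogous bound holds for the difference $|\cZ_s \bu(\bx_1) - \cZ_s \bu(\bx_2)|$ with the second-order difference $[\bu(\bx_1+\bz)-\bu(\bx_1)] - [\bu(\bx_2+\bz)-\bu(\bx_2)]$ in the numerator. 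Thus every estimate in the theorem reduces to one of these two scalar integrals, and only the bounds on the integrand differ among the three parts.

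For part (1), the $L^{\infty}$ bound follows by splitting at $|\bz|=1$: the H\"older estimate $|\bu(\bx+\bz)-\bu(\bx)|\leq [\bu]_{C^{0,\beta}}|\bz|^\beta$ handles $|\bz|\leq 1$ (integrable since $\beta>s$) and $|\bu(\bx+\bz)-\bu(\bx)|\leq 2\Vnorm{\bu}_{L^\infty}$ handles $|\bz|>1$ (integrable since $s>0$). For the $C^{0,\beta-s}$ seminorm, I would split the difference integral at $|\bz|=r:=|\bx_1-\bx_2|$: for $|\bz|\leq r$ bound each individual difference $|\bu(\bx_i+\bz)-\bu(\bx_i)|$ by $[\bu]_{C^{0,\beta}}|\bz|^\beta$; for $|\bz|>r$ use $|\bu(\bx_1+\bz)-\bu(\bx_2+\bz)|+|\bu(\bx_1)-\bu(\bx_2)|\leq 2[\bu]_{C^{0,\beta}}r^\beta$. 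A direct calculation shows each piece contributes a factor of $r^{\beta-s}$. For part (2), the plan is to commute differentiation through the integral, obtaining $D_i\cZ_s\bu = \cZ_s D_i\bu$; this is justified by dominated convergence using essentially the integrable envelope in the proof of Proposition \ref{decay-estimates-operator}, which applies to any $C^{1,\beta}$ vector field. Since $D_i\bu \in C^{0,\beta}$ with $\beta>s$, part (1) applied to $D_i\bu$ yields $D_i\cZ_s\bu \in C^{0,\beta-s}$, and combined with the $L^\infty$ bound this gives $\cZ_s\bu \in C^{1,\beta-s}$.

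Part (3) is the delicate case, since $\beta<s$ prevents direct application of part (1) to $\grad\bu$. Splitting the difference integral at $|\bz|=r$, in the near-field I would use the improved estimate
\[
|[\bu(\bx_1+\bz)-\bu(\bx_1)] - [\bu(\bx_2+\bz)-\bu(\bx_2)]| \leq [\grad\bu]_{C^{0,\beta}} r^\beta |\bz|,
\]
which follows from $\bu(\bx_i+\bz)-\bu(\bx_i) = \int_0^1 \grad\bu(\bx_i + t\bz)\bz\,\rmd t$ and $\beta$-H\"older continuity of $\grad\bu$ in the base point; this contributes $r^\beta \int_{|\bz|\leq r} |\bz|^{-(d+s-1)}\,\rmd\bz \lesssim r^{\beta+1-s}$. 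In the far-field I would use the complementary estimate
\[
|[\bu(\bx_1+\bz)-\bu(\bx_2+\bz)] - [\bu(\bx_1)-\bu(\bx_2)]| \leq [\grad\bu]_{C^{0,\beta}} |\bz|^\beta r,
\]
which follows from the mean value theorem applied to $\bx \mapsto \bu(\bx+\bz)-\bu(\bx)$ together with $|\grad\bu(\bx+\bz) - \grad\bu(\bx)|\leq [\grad\bu]_{C^{0,\beta}}|\bz|^\beta$; this contributes $r\int_{|\bz|>r} |\bz|^{\beta-d-s}\,\rmd\bz \lesssim r\cdot r^{\beta-s}$. The required $L^\infty$ bound follows as in part (1).

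The main obstacle is precisely part (3): the naive bound $|\bu(\bx+\bz)-\bu(\bx)|\leq \Vnorm{\grad\bu}_{L^\infty}|\bz|$ yields only $r^{1-s}$ and loses the critical factor $r^\beta$. Recovering it requires the two complementary first-order Taylor arguments above, and the convergence of the far-field integral $\int_{|\bz|>r} |\bz|^{\beta-d-s}\,\rmd\bz$ relies precisely on the hypothesis $\beta<s$; thus the threshold $\beta=s$ separating parts (2) and (3) appears organically from the integrability condition governing the far-field estimate.
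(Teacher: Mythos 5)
Your proposal is correct and takes essentially the same approach as the paper: all three parts split the integral at $R = |\bx_1-\bx_2|$ and use H\"older/Taylor bounds on the near and far fields. The only difference is a minor streamlining in the near-field estimate of part (3), where you use the integral form of Taylor's theorem with the same parameter $t$ for both base points (giving the tighter bound $[\grad\bu]_{C^{0,\beta}}r^\beta|\bz|$), whereas the paper applies the mean value theorem separately at each base point and therefore carries an extra $|\bh|^{1+\beta}$ term that, after setting $R=r$, yields the same rate $r^{1+\beta-s}$.
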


\begin{proof}

To prove 1), we write
\begin{align*}
|\cZ_s \ub(\xb)|
&\le
\int_{\mathbb{R}^d}
\frac{|\ub(\yb) - \ub(\xb)|}{|\yb-\xb|^{d+s}} \, \rmd \yb 
\\
&\leq
\int_{|\yb-\xb| \leq R}
\frac{|\ub(\yb) - \ub(\xb)|}{|\yb-\xb|^{d+s}} \, \rmd \yb
+
\int_{|\yb-\xb| > R}
\frac{|\ub(\yb) - \ub(\xb)|}{|\yb-\xb|^{d+s}} \, \rmd \yb 
\\
&\leq
\left[ \ub \right]_{C^{0,\beta}(\mathbb{R}^d)}
\int_{|\yb-\xb| \leq R}
\frac{1}{|\yb-\xb|^{d+s-\beta}} d\yb
+
2 \| \ub \|_{L^\infty(\mathbb{R}^d)}
\int_{|\yb-\xb| > R}
\frac{1}{|\yb-\xb|^{d+s}} \, \rmd \yb
\\
&\leq
C
R^{\beta - s}
\left[ \ub \right]_{C^{0,\beta}(\mathbb{R}^d)}
+
2 R^{-s} 
\| \ub \|_{L^\infty(\mathbb{R}^d)}
\end{align*}
for any $R>0$. This holds for all $\xb \in \mathbb{R}^d$, so
$\cZ_s \ub \in L^\infty(\mathbb{R}^d)$. 

Next, we use the following notation for the placeholder $\cZ_s$:
\begin{equation*}
\cZ_s \ub(\xb) - \cZ_s \ub(\yb)
= 
\int_{\mathbb{R}^d}
\frac{\left( \ub(\xb+\hb) - \ub(\xb) \right) - \left( \ub(\yb+\hb) - \ub(\yb) \right)}{|\hb|^{d+s}}
\left( \otimes, \times, \cdot \right)
\frac{\hb}{|\hb|} \, \rmd \hb.
\end{equation*}
It is clear that for any $R > 0$
\begin{align*}
|\cZ_s \ub(\xb) - \cZ_s \ub(\yb)|
&\leq 
\int_{\mathbb{R}^d}
\frac{| \left( \ub(\xb+\hb) - \ub(\xb) \right) - \left( \ub(\yb+\hb) - \ub(\yb) \right) |}{|\hb|^{d+s}}
\, \rmd \hb
\\
&=
\int_{|\bh| \leq R} ...
+
\int_{|\bh| > R} ...
\\
&=
I + II. 
\end{align*}
To estimate $I$, we use
\begin{equation*}
|\ub(\zb+\hb) - \ub(\zb)| \leq [\ub]_{C^{0,\beta}(\mathbb{R}^d)} |\hb|^\beta \text{ for } \bz = \bx \text{ or } \by\,.
\end{equation*}
Therefore, 
\begin{align*}
I 
&\le
\left[ \bu \right]_{C^{0,\beta}(\mathbb{R}^d)}
\int_{|\hb|\leq R}\frac{1}{|\hb|^{d+s-\beta}} \, \rmd\hb
=
C
\left[ \ub \right]_{C^{0,\beta}(\mathbb{R}^d)}
R^{\beta - s}. 
\end{align*}
For II, we use the estimate that, for $\bh\in \mathbb{R}^d$,
\begin{equation*}
    |\bu(\bx+\bh)-\bu(\by+\bh)| 
        + |\bu(\bx)-\bu(\by)| 
    \leq 2 [\ub]_{C^{0,\beta} (\mathbb{R}^d)} 
    |\bx-\by|^{\beta}
\end{equation*}
to obtain
\begin{align*}
II &\leq 2 [\ub]_{C^{0,\beta} (\mathbb{R}^d)}
\int_{|\hb| \leq R }
\frac{|\xb-\yb|^{\beta}}{|\hb|^{d+s}}  \, \rmd\hb
\\
&=
C
\left[ \ub \right]_{C^{0,\beta}(\mathbb{R}^d)}
\frac{|\xb-\yb|^\beta}{R^s}\,.
\end{align*}
Choosing $R = |\xb-\yb|$ gives
\begin{align*}
|\cZ_s \ub(\xb) - \cZ_s \ub(\yb)| &\leq I + II
\leq C 
\left[ \ub \right]_{C^{0,\beta}(\mathbb{R}^d)}
|\xb-\yb|^{\beta - s}. 
\end{align*}
Therefore, $\cZ_s \ub \in C^{0,\beta-s} (\mathbb{R}^d)$. 


The proof of 2) proceeds in the same as for the proof of 1), but with $\nabla \ub$ in place of $\ub$. Here, one only needs to verify that the operator $\cZ_s$ commutes with derivatives. The process to verify this follows identically to the process in the proof of \Cref{decay-estimates-operator}, with the estimate 
\eqref{eq:L1Est2} replaced with
\begin{equation*}
    \frac{|D_i \bu(\bx+\bh)-D_i \bu(\bx)|}{|\bz|^{d+s}} 
    \leq \chi_{ \{|\bh|\leq 1\} } 
        \frac{ [ \grad \bu ]_{C^{0,\beta}(\bbR^d)} }{ |\bh|^{d+s-\beta} }
        + 2 \chi_{ \{|\bh|\geq 1\} } 
        \frac{ \Vnorm{\grad \bu}_{L^{\infty}(\bbR^d)} }{ |\bh|^{d+s} }\,.
\end{equation*}

To prove 3), we assume $\ub \in C^{1,\beta}(\mathbb{R}^d)$ for $s > \beta$. To show that
$\cZ_s \ub \in C^{0,\beta - s + 1}(\mathbb{R}^d)$, we write
\begin{align*}
|\cZ_s \ub(\xb)|
&\le 
\int_{\mathbb{R}^d}
\frac{|\ub(\yb) - \ub(\xb)|}{|\yb - \xb|^{d+s}}
\, \rmd \yb
\\
&\le
\|\nabla \ub\|_{L^{\infty}(\bbR^d)}
\int_{|\yb - \xb| < R}
\frac{1}{|\yb - \xb|^{d+s-1}}
\, \rmd \yb
+
2 \| \ub \|_{L^\infty(\mathbb{R}^d)}
\int_{|\yb - \xb| \ge R} \frac{1}{|\yb - \xb|^{d+s}}
\, \rmd \yb
\\
&\le
\|\ub\|_{C^1(\mathbb{R}^d)}
\left(
R^{1-s} + R^{-s}
\right). 
\end{align*}
Thus, $\cZ \ub \in L^{\infty}(\mathbb{R}^d)$. 
Next, we have
\begin{align*}
|\cZ_s \ub(\xb) - \cZ_s \ub(\yb)|
&\le
\int_{\mathbb{R}^d}
\frac{|(\ub(\xb + \hb) - \ub(\xb)) - (\ub(\yb + \hb) - \ub(\yb))|}
{|\hb|^{d+s}} \, \rmd \hb
\\
&=
\int_{|\hb| \le R} ... + \int_{|\hb| > R} ...
\\
&=
I + II. 
\end{align*}
For $I$, we know that for $t, t' \in (0,1)$, and any $\bh\in \mathbb{R}^d$ we write first, using mean value theorem, 
\[
[\ub(\xb+\hb) - \ub(\xb)]
-
[\ub(\yb+\hb) - \ub(\yb)]
=
\nabla \ub(\xb + t \hb) \cdot \hb
-
\nabla \ub(\yb + t' \hb) \cdot \hb
\]
We add and subtract terms $\nabla \ub(\xb) \cdot \hb$ and $\nabla \ub(\yb) \cdot \hb$ to obtain the estimate 
\begin{align*}
&|(\ub(\xb+\hb) - \ub(\xb))
-
(\ub(\yb+\hb) - \ub(\yb))|\\
&\le |\nabla \ub(\xb) - \nabla \ub(\yb)| \, |\hb|
+ |\nabla \ub(\xb + t \hb) - \nabla \ub(\xb)| \, |\hb|
+ |\nabla \ub(\xb + t' \hb) - \nabla \ub(\yb)| \, |\hb|
\\
&\le
|\nabla \ub(\xb) - \nabla \ub(\yb)| \, |\hb|
+ 2 \left[ \nabla \ub \right]_{C^{0,\beta}(\mathbb{R}^d)} |\hb|^{1+\beta}
\\
&\le
2 [\nabla \ub]_{C^{0,\beta}(\mathbb{R}^d)}
\left(
|\xb - \yb|^{\beta}|\hb| + |\hb|^{1+\beta}
\right).
\end{align*}
Thus, 
 \begin{align*}
I &\le 2[\nabla \ub]_{C^{0,\beta}(\mathbb{R^d})}
\left[
\int_{|\hb| \le R}
|\xb - \yb|^\beta \frac{1}{|\hb|^{d+s-1}}
+
\frac{1}{|\hb|^{d+s-\beta-1}}
 \, \rmd \hb
\right]
\\
&=
C [\nabla \ub]_{C^{0,\beta}(\mathbb{R^d})}
\left(
|\xb - \yb|^{\beta} R^{1-s}
+
R^{1+\beta-s}
\right).
\end{align*}
To estimate $II$, we have, for some $t, t' \in (0,1)$, for any $\bh$
\begin{align*}
[\ub(\xb&+\hb) - \ub(\yb+\hb)] - [\ub(\xb) - \ub(\yb)]
\\
&=\nabla \ub\big(t \xb + t \hb + (1-t) \yb + (1-t) \hb\big) (\xb - \yb)
- \nabla \ub\big(t' \xb + (1-t') \yb \big) (\xb - \yb)
\\
&=
\left[ \nabla \ub \big( \yb + t (\xb - \yb) + \hb \big)
-
\nabla \ub \big( \yb + t' (\xb - \yb) \big) \right](\xb - \yb)
\end{align*}
We now add and subtract appropriate terms to be able to write
\begin{align*}
[\ub(\xb&+\hb) - \ub(\yb+\hb)] - [\ub(\xb) - \ub(\yb)]
\\
&=
\left[ \nabla \ub \big( \yb + t (\xb - \yb) + \hb \big)
-
\nabla \ub \big( \yb + t' (\xb - \yb) \big) \right](\xb - \yb)\\
&=\left[ \nabla \ub \big( \yb + t (\xb - \yb) + \hb \big)
-\nabla \ub \big( \xb + t (\xb - \yb) +\hb \big) \right] (\xb - \yb)\\
&+\left[\nabla \ub \big( \xb + t (\xb - \yb) +\hb \big)-\nabla \ub(\xb + \bh)\right](\xb-\yb)\\
&+\left[\nabla \ub(\xb+\hb) - \nabla \ub(\bx)\right](\xb-\yb)\\
&+\left[\nabla \ub(\xb) -\nabla \ub (\yb) \right](\xb-\yb)\\
&+\left[\nabla \ub(\yb) - \nabla \ub(\yb + t'(\xb-\yb))\right](\xb-\yb).
\end{align*}
Therefore, estimating each term as before using the H\"older continuity of $\ub$ we have 
\[
|[\ub(\xb+\hb - \ub(\yb+\hb)] - [\ub(\xb) - \ub(\yb)]| \leq C \left[ \nabla \ub \right]_{C^{0,\beta}(\mathbb{R}^d)} \left( |\yb - \xb|^{\beta+1} + |\xb - \yb|
|\hb|^{\beta} \right).
\]
Thus, 
\begin{equation*}
II 
\le
C [\nabla \ub]_{C^{0,\beta}(\mathbb{R}^d)}
\left(
\int_{|\xb - \yb| > R} 
\frac{|\xb-\yb|^{1+\beta}}{|\hb|^{d+s}}
+
\frac{|\xb - \yb|}{|\hb|^{d+s-\beta}} 
\, \rmd \hb
\right).
\end{equation*}
Since $s > \beta$, the above integral exists and so
\begin{equation*}
II \le 
C [\nabla \ub]_{C^{0,\beta}(\mathbb{R}^d)}
\left(
\frac{|\xb - \yb|^{1+\beta}}{R^s}
+
\frac{|\xb - \yb|}{R^{s-\beta}}
\right).
\end{equation*}
Putting I and II together, we obtain that for any $R>0$, and $\xb,\yb\in \mathbb{R}^d$ 
\begin{align*}
|\cZ_s \ub (\xb) - \cZ_s \ub(\yb)|
&\le 
I + II 
\\
&\le
C [\nabla \ub]_{C^{0,\beta}(\mathbb{R^d})} 
\left(
|\xb - \yb|^{\beta} R^{1-s}
+
R^{1+\beta-s}
+
\frac{|\xb - \yb|^{1+\beta}}{R^s}
+
\frac{|\xb - \yb|}{R^{s-\beta}}
\right).
\end{align*}
Choosing $R = |\xb - \yb|$ gives us
\begin{equation*}
| \cZ_s \ub(\xb) - \cZ_s \ub(\yb) |
\le
C [\nabla \ub]_{C^{0,\beta}(\mathbb{R}^d)}
|\xb - \yb|^{\beta - s + 1},
\end{equation*}
completing the proof. 

\end{proof}

\section{Vector calculus identities for nonlocal operators}\label{sec:identities}

{This section is devoted to the proof of several operator identities whose local, classical counterpart is well-established, but that have not been fully investigated for the nonlocal operators considered in this work. To handle the potential singularity along the diagonal $\bx=\by$, we start by proving similar identities for ``truncated'' operators first, and then recover the desired identities in the vanishing truncation limit. The latter is justified by an important result proved at the beginning of this section in Theorem \ref{thm:OperatorsWellDefdForSmoothFxns}. This allows us to establish the validity of the operator identities for bounded $C^2$ functions.}

{We define the truncated operators below; note that in the nonlocal literature (see, e.g. \cite{Delia2013}) ``truncated'' operators usually correspond to ``compactly supported'' kernels; however, in our usage below, the truncation is performed in a neighborhood of $\xb$, i.e. we remove from the domain of integration an infinitesimal ball centered at $\xb$. Let $\veps > 0$. The truncated gradient, divergence and curl operators are defined as}
\begin{align*}
\cG_{\varrho,\veps}\ub(\xb) &= \int_{\bbR^d \setminus B(\bx,\veps)} \varrho (\yb-\xb)  \frac{(\ub(\yb)-\ub(\xb))}{|\by-\bx|} \otimes \frac{\by-\bx}{|\by-\bx|}  \, \rmd\yb\,, \qquad \bu : \bbR^d \to \bbR^N\,,\\
\cD_{\varrho,\veps}\ub(\xb) &= \int_{\bbR^d  \setminus B(\bx,\veps)} \varrho (\yb-\xb)  \frac{(\ub(\yb)-\ub(\xb))}{|\by-\bx|} \frac{\by-\bx}{|\by-\bx|} \, \rmd\yb\,, \qquad \bu : \bbR^d \to \bbR^{N \times d}\,, \\
\cC_{\varrho,\veps}\ub(\xb) &= \int_{\bbR^d  \setminus B(\bx,\veps)} \varrho (\yb-\xb) \frac{\by-\bx}{|\by-\bx|} \times \frac{(\ub(\yb)-\ub(\xb))}{|\by-\bx|} \, \rmd\yb\,, \quad \bu : \bbR^d \to \bbR^d \text{ and } d = 3\,.
\end{align*}

We use the notation $\cZ_{\varrho,\veps} \bu(\bx)$ in exactly the same way as in Proposition \ref{decay-estimates-operator}.
Note that for $\bu \in L^{\infty}(\bbR^d)$, we have that
\begin{equation*}
    | \cZ_{\varrho,\veps} \bu(\bx)| \leq 2 \Vnorm{\bu}_{L^{\infty}(\bbR^d)} \intdm{|\bh| \geq \veps }{\frac{\varrho(|\bh|)}{|\bh|} }{\bh}.
\end{equation*}
Thus, for any fixed $\veps > 0$ all three operators are well-defined. {The next theorem shows that the composition of the limits of two operators, when compatible, equals the limit of the truncated composition.}

\begin{theorem}\label{thm:OperatorsWellDefdForSmoothFxns}
    Let $d \geq 2$ and $N \geq 1$.
    Let $\cY_{\varrho} \circ \cZ_{\varrho} \bu(\bx)$ denote any of the following compositions of operators:
    \begin{equation*}
        \begin{split}
            \cG_{\varrho} \circ \cG_{\varrho} u(\bx)\,, &\qquad
            u : \bbR^d \to \bbR\,,\\
            \cD_{\varrho} \circ \cG_{\varrho} \bu(\bx)\,, &\qquad
            \bu : \bbR^d \to \bbR^N\,,\\
            \cC_{\varrho} \circ \cG_{\varrho} u(\bx)\,, &\qquad
            u : \bbR^d \to \bbR \text{ and } d = 3\,,\\
            \cG_{\varrho} \circ \cD_{\varrho} \bu(\bx)\,, &\qquad
            \bu : \bbR^d \to \bbR^{N\times d}\,,\\
            \cD_{\varrho} \circ \cD_{\varrho} \bu(\bx)\,, &\qquad
            \bu : \bbR^d \to \bbR^{d \times d}\,,\\
            \cC_{\varrho} \circ \cD_{\varrho} \bu(\bx)\,, &\qquad
            \bu : \bbR^d \to \bbR^{d \times d} \text{ and } d = 3\,,\\
            \cG_{\varrho} \circ \cC_{\varrho} \bu(\bx)\,, &\qquad
            \bu : \bbR^d \to \bbR^d \text{ and } d = 3\,,\\
            \cD_{\varrho} \circ \cC_{\varrho} \bu(\bx)\,, &\qquad
            \bu : \bbR^d \to \bbR^d \text{ and } d = 3\,,\\
            \cC_{\varrho} \circ \cC_{\varrho} \bu(\bx)\,, &\qquad
            \bu : \bbR^d \to \bbR^d \text{ and } d = 3\,.\\
        \end{split}
    \end{equation*}
    If either
    \begin{enumerate}
    \item[1)]
    $\bu \in C^2_b(\bbR^d)$, or
    \item[2)]
    $\varrho = \varrho_s$ and $\bu \in L^1_{2s}(\bbR^d) \cap \scC^{2s+\sigma}(\bbR^d)$ for $\sigma > 0$ sufficiently small,
    \end{enumerate}
    then $\cY_{\varrho} \circ \cZ_{\varrho} \bu(\bx)$ is a bounded function. Furthermore, we have
    \begin{equation*}
        \cY_{\varrho} \circ \cZ_{\varrho} \bu(\bx) = \lim_{\veps, \veps' \to 0}  \cY_{\varrho,\veps} \circ \cZ_{\varrho,\veps'} \bu(\bx)
    \end{equation*}
    where $ \cY_{\varrho,\veps}$, $ \cZ_{\varrho,\veps'}$ denote the relevant truncated form of the operator.
\end{theorem}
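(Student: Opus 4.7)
My plan is to address the two claims separately: (I) that $\cY_\varrho \circ \cZ_\varrho \bu(\bx)$ is a bounded function under either hypothesis, and (II) that it coincides with the joint limit of the truncated compositions.

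For (I), in case~(1) I would invoke the Remark following Proposition~\ref{decay-estimates-operator} twice: it states that $\cZ_\varrho$ sends $C^m_b(\bbR^d)$ into $C^{m-1}_b(\bbR^d)$ with an explicit norm bound, so $\bu \in C^2_b$ yields $\cZ_\varrho \bu \in C^1_b$, and a second application gives $\cY_\varrho \circ \cZ_\varrho \bu \in C^0_b$. In case~(2) I would iterate Theorem~\ref{thm:MappingPropertiesOfOperators}, which shows that $\cZ_s$ loses exactly $s$ derivatives on the H\"older scale. Provided $\sigma < 1-s$, one application sends $\bu \in \scC^{2s+\sigma}$ to a function in $C^{0,s+\sigma}(\bbR^d)$ (by part~1 if $s < 1/2$, by part~3 if $s \geq 1/2$); a second application (part~1, since $s+\sigma > s$) then produces a function in $C^{0,\sigma}(\bbR^d)$. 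The hypothesis $\cZ_s \bu \in L^1_s$ needed to apply Theorem~\ref{thm:MappingPropertiesOfOperators} the second time is automatic because $\cZ_s\bu \in L^\infty \subset L^1_s$.

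For (II), I would express the truncated composition as
\begin{equation*}
\cY_{\varrho,\veps} \cZ_{\varrho,\veps'} \bu(\bx) = \intdm{\bbR^d}{ \chi_{\{|\bh|>\veps\}} \varrho(|\bh|) \frac{\cZ_{\varrho,\veps'}\bu(\bx+\bh)-\cZ_{\varrho,\veps'}\bu(\bx)}{|\bh|} \star \frac{\bh}{|\bh|} }{\bh},
\end{equation*}
where $\star$ stands for the tensor operation appropriate to the given composition, and pass to the joint limit $(\veps,\veps')\to(0,0)$ by the dominated convergence theorem. The cutoff $\chi_{\{|\bh|>\veps\}}$ tends to $1$ pointwise, and $\cZ_{\varrho,\veps'}\bu(\bz) \to \cZ_\varrho\bu(\bz)$ pointwise by a second application of dominated convergence to the defining integral of $\cZ_\varrho\bu$, whose absolute convergence follows from the regularity of $\bu$ together with~\eqref{assumption:Kernel} in case~(1), and from the explicit size of $\varrho_s$ combined with the H\"older regularity of $\bu$ in case~(2). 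For a uniform integrable majorant, I would observe that the bounds in the proofs of Proposition~\ref{decay-estimates-operator} and Theorem~\ref{thm:MappingPropertiesOfOperators} involve only integrals of $\varrho$ over subsets of $\bbR^d$ and therefore carry over to $\cZ_{\varrho,\veps'}$ with constants independent of $\veps'$. This yields a uniform Lipschitz (case 1) or H\"older (case 2) estimate on $\cZ_{\varrho,\veps'}\bu$, so the integrand is controlled by a multiple of $\varrho(|\bh|)\chi_{\{|\bh|\leq 1\}} + |\bh|^{-1}\varrho(|\bh|)\chi_{\{|\bh|>1\}}$, which is integrable by~\eqref{assumption:Kernel}.

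The main obstacle I foresee is precisely this uniformity in $\veps'$: one must check that each step of the regularity bootstrap in Proposition~\ref{decay-estimates-operator} and Theorem~\ref{thm:MappingPropertiesOfOperators} passes through when $\cZ_\varrho$ is replaced by $\cZ_{\varrho,\veps'}$, with constants depending on $\bu$ and $\varrho$ but not on $\veps'$. A secondary bookkeeping point in case~(2) is that $\sigma < 1-s$ is what is meant by ``sufficiently small,'' and is exactly what is needed to keep $\cZ_s \bu$ inside a H\"older class admissible for a second application of Theorem~\ref{thm:MappingPropertiesOfOperators}.
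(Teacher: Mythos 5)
Your approach matches the paper's: both bound $|\cY_{\varrho,\veps}\circ\cZ_{\varrho,\veps'}\bu(\bx)|$ by an integral whose integrand is controlled, uniformly in $\veps'$, by an $L^1$ function, using precisely your key observation that the $L^\infty$ and Lipschitz/H\"older estimates from Proposition~\ref{decay-estimates-operator} and Theorem~\ref{thm:MappingPropertiesOfOperators} carry over to the truncated operators with $\veps'$-independent constants, and then invoke dominated convergence. (The one small slip is the explicit form of your case-(2) majorant near the origin, which should be a multiple of $|\bh|^{s+\sigma-1}\varrho_s(|\bh|)\chi_{\{|\bh|\leq 1\}}$ rather than $\varrho_s(|\bh|)\chi_{\{|\bh|\leq 1\}}$, since $\cZ_{s,\veps'}\bu$ is only uniformly $C^{0,s+\sigma}$, not Lipschitz --- this is still integrable, so the argument is unaffected.)
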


\begin{proof}
For any $\veps$, $\veps' > 0$ we have
\begin{equation*}
    \begin{split}
    |\cY_{\varrho,\veps} \circ \cZ_{\varrho,\veps'} \bu(\bx)| &\leq \intdm{\bbR^d \setminus B({\bf 0},\veps)}
        {
        \varrho( \bf h )  \frac{|\cZ_{\varrho,\veps'} \bu(\bx+\bh) - \cZ_{\varrho,\veps'} \bu(\bx)| 
        }{|\bh|} }{\bh}\,.
    \end{split}
\end{equation*}
We will use the Lebesgue Dominated Convergence Theorem. We will derive the relevant estimates for the function
\begin{equation*}
    \Upsilon_{\veps,\veps'}(\bx,\bh) :=
    \chi_{\bbR^d \setminus B({\bf 0},\veps)} \varrho(\bh) \frac{|\cZ_{\varrho,\veps'} \bu(\bx+\bh) - \cZ_{\varrho,\veps'} \bu(\bx)| }{|\bh|}\,.
\end{equation*}
Specifically, we will show that there exists a function $\Upsilon(\bh)$ such that $\Upsilon \in L^1(\bbR^d)$ and
\begin{equation*}
    |\Upsilon_{\veps,\veps'}(\bx,\bh)| \leq |\Upsilon(\bh)| \quad \text{ for all } \bx\,, \bh \in \bbR^d\,, \qquad \text{ for all } \veps,\veps' > 0\,.
\end{equation*}
First we prove the theorem for case 1). We have
\begin{equation*}
    \begin{split}
        \Upsilon_{\veps,\veps'}(\bx,\bh) \leq \varrho(|\bh|) \min \left\{ \Vnorm{\grad \cZ_{\varrho,\veps'} \bu}_{L^{\infty}(\bbR^d)} \,, \frac{ \Vnorm{\cZ_{\varrho,\veps'} \bu}_{L^{\infty}(\bbR^d)} }{|\bh|} \right\}\,.
    \end{split}
\end{equation*}
Therefore, it suffices to show that there exist constants $b_1$ and $b_2$ independent of $\veps'$ such that
\begin{equation}\label{eq:CurlOfCurl:SmoothFxns:Pf1}
    \Vnorm{\cZ_{\varrho,\veps'} \bu}_{L^{\infty}(\bbR^d)} \leq b_1\,, \qquad \Vnorm{\grad \cZ_{\varrho,\veps'} \bu}_{L^{\infty}(\bbR^d)} \leq b_2\,, \qquad \text{ for all } \veps' >0
\end{equation}
and the proof will be complete by setting $\Upsilon(\bh) = \varrho(|\bh|) \min \left\{ b_2 \,, \frac{b_1 }{|\bh|} \right\} $.
To prove \eqref{eq:CurlOfCurl:SmoothFxns:Pf1} we proceed analogously to  \eqref{eq:L1Est1}:
\begin{equation*}
    \begin{split}
    |\cZ_{\varrho,\veps'} \bu(\bx)| &\leq \intdm{\veps' \leq |\bz|\leq 1 }{ \varrho(|\bz|) \frac{|\bu(\bx+\bz)-\bu(\bx)|}{|\bz|} }{\bz} + \intdm{|\bz| > 1 }{ \varrho(|\bz|) \frac{|\bu(\bx+\bz)-\bu(\bx)|}{|\bz|} }{\bz} \\
    &\leq \intdm{|\bz|\leq 1 }{ \varrho(|\bz|) \frac{|\grad \bu(\bx)(\bz)| + o(|\bz|)}{|\bz|} }{\bz} + \intdm{|\bz| > 1 }{ \varrho(|\bz|) \frac{|\bu(\bx+\bz)|+|\bu(\bx)|}{|\bz|} }{\bz} \\
    &\leq |\grad \bu(\bx)| \intdm{|\by-\bx|\leq 1 }{ \varrho(|\bz|) }{\bz} + \Vnorm{\bu}_{L^{\infty}(\bbR^d)} \intdm{|\bz| > 1 }{  \frac{\varrho(|\bz|)}{|\bz|} }{\bz} \\
    &:= b_1 < \infty\,.
    \end{split}
\end{equation*}
The estimate for $\grad \cZ_{\varrho,\veps'} \bu$ follows the same lines, since the operators $\cZ_{\varrho,\veps'}$ commute with derivatives. Therefore, the theorem is proved for case 1).

{
For case 2) and for $s < 1/2$, we need to show that
\begin{equation*}
    \Upsilon_{\veps,\veps'}(\bx,\bh) = \chi_{\bbR^d \setminus B({\bf 0},\veps)} \frac{|\cZ_{s,\veps'} \bu(\bx+\bh) - \cZ_{s,\veps'} \bu(\bx)| }{|\bh|^{d+s}}\,.
\end{equation*}
is bounded by an $L^1$ function $\Upsilon(\bh)$. If we can show the existence of constants $b_1$ and $b_2$ independent of $\veps'$ such that
\begin{equation}\label{eq:Composition:FractionalOp:Proof1}
    \Vnorm{\cZ_{s,\veps'} \bu}_{L^{\infty}(\bbR^d)} \leq b_1\,,
    \qquad [\cZ_{s,\veps'} \bu]_{C^{0,s+\sigma}(\bbR^d)} \leq b_2\,,
\end{equation}
then we have the upper bound by an $L^1$ function
\begin{equation*}
    \Upsilon_{\veps,\veps'}(\bx,\bh) 
    \leq \chi_{ \{ |\bh|\leq 1 \} } 
        \frac{[\cZ_{s,\veps'} \bu]_{C^{0,s+\sigma}(\bbR^d)}}{|\bh|^{d-\sigma}}
    + 2 \chi_{ \{ |\bh| > 1 \} } 
        \frac{\Vnorm{\cZ_{s,\veps'} \bu}_{L^{\infty}(\bbR^d)}}{|\bh|^{d+s}}\,,
\end{equation*}
and the proof in the case 2) with $s < 1/2$ will be complete.
The existence of $b_1$ and $b_2$ in \eqref{eq:Composition:FractionalOp:Proof1} can be shown by following the proof of the estimate \eqref{eq:NaturalEstimate1} line by line, with $\bu$ replaced by $\cZ_{s,\veps'} \bu$ and $\beta = 2s+\sigma$.

The case 2) and $s \geq 1/2$ is proved the same way, instead following the proof of the estimate \eqref{eq:NaturalEstimate3} line by line.


}
\end{proof}

\subsection{The Curl of the Gradient is Zero}

The following proposition is a nonlocal analogue of the vector calculus identity $\curl \grad u = {\bf 0}$.
\begin{proposition}\label{prop:CurlOfGrad:SmoothFxns}
The identity
\begin{equation}\label{eq:CurlOfGrad:SmoothFxns}
\cC_{\varrho} \circ \cG_{\varrho} u(\bx) = {\bf 0}
\end{equation}
holds for all $\bx \in \bbR^d$ if either
\begin{enumerate}
    \item[1)] $u \in C^2_b(\bbR^d)$ with $d=3$, or
    \item[2)] $\varrho = \varrho_s$ and $u \in L^1_{2s}(\bbR^d) \cap \scC^{2s+\sigma}(\bbR^d)$ for $\sigma > 0$ sufficiently small.
\end{enumerate}
\end{proposition}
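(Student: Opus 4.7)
The plan is to invoke Theorem \ref{thm:OperatorsWellDefdForSmoothFxns}, which in both cases (1) and (2) produces the identity $\cC_{\varrho} \circ \cG_{\varrho} u(\bx) = \lim_{\veps,\veps'\to 0} \cC_{\varrho,\veps} \circ \cG_{\varrho,\veps'} u(\bx)$ with the joint limit known to exist. Because the joint limit exists, it suffices to evaluate it along any convenient subsequence, and I would take the diagonal $\veps = \veps'$ in order to unlock a symmetrization argument.

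Fix $\veps > 0$. The first step is to substitute the defining integral for $\cG_{\varrho,\veps} u$ into $\cC_{\varrho,\veps}$ and apply Fubini's theorem. Absolute convergence for the ensuing double integral follows from truncation, the $L^{\infty}$ bound on $u$ built into the $C^2_b$ and $\scC^{2s+\sigma}$ hypotheses, and the fact that $\int_{|\bh|>\veps} \varrho(|\bh|)/|\bh| \, d\bh < \infty$ by assumption \eqref{assumption:Kernel}. After changes of variables $\bh = \by - \bx$ together with $\bk = \bz - \by$ (respectively $\bk = \bw - \bx$) in the two terms of $\cG_{\varrho,\veps} u(\by) - \cG_{\varrho,\veps} u(\bx)$, the truncated composition decomposes as $A(\bx) - B(\bx)$, where
\begin{align*}
A(\bx) &= \iint_{|\bh|,\,|\bk| > \veps} \varrho(|\bh|)\,\varrho(|\bk|)\, \frac{u(\bx+\bh+\bk) - u(\bx+\bh)}{|\bh|^2\,|\bk|^2}\, (\bh \times \bk) \, d\bh \, d\bk, \\
B(\bx) &= \iint_{|\bh|,\,|\bk| > \veps} \varrho(|\bh|)\,\varrho(|\bk|)\, \frac{u(\bx+\bk) - u(\bx)}{|\bh|^2\,|\bk|^2}\, (\bh \times \bk) \, d\bh \, d\bk.
\end{align*}

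The next step is to exhibit both pieces as zero using the oddness of $\bh \mapsto \bh/|\bh|^2$ against the radial kernel. For $B(\bx)$ the integrand factors, so integrating $\bh$ first produces $\int_{|\bh|>\veps} \varrho(|\bh|)\,\bh/|\bh|^2 \, d\bh = \mathbf{0}$ by symmetry, forcing $B(\bx) = \mathbf{0}$. For $A(\bx)$ I would swap $\bh \leftrightarrow \bk$, which preserves the symmetric truncated domain, and combine it with $\bk \times \bh = -\bh \times \bk$ to obtain, after averaging,
\begin{equation*}
2A(\bx) = \iint_{|\bh|,\,|\bk| > \veps} \varrho(|\bh|)\,\varrho(|\bk|)\, \frac{u(\bx+\bk) - u(\bx+\bh)}{|\bh|^2\,|\bk|^2}\, (\bh \times \bk) \, d\bh \, d\bk.
\end{equation*}
Splitting $u(\bx+\bk) - u(\bx+\bh) = [u(\bx+\bk) - u(\bx)] - [u(\bx+\bh) - u(\bx)]$ then reduces each resulting integral to the same oddness argument as for $B$ (applied in $\bh$ for the first piece and in $\bk$ for the second), yielding $A(\bx) = \mathbf{0}$. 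Hence $\cC_{\varrho,\veps} \circ \cG_{\varrho,\veps} u(\bx) = \mathbf{0}$ for every $\veps > 0$, and passing to the limit $\veps \to 0$ finishes the proof.

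The main obstacle I anticipate is verifying absolute convergence of the truncated double integral so that Fubini applies and the change of variables is rigorous. In case (1) this is immediate from $u \in C^2_b$ together with truncation; in case (2) the singular fractional kernel requires more care near the origin, but the groundwork has already been laid by Theorem \ref{thm:OperatorsWellDefdForSmoothFxns}, which supplies the joint $L^1$ dominant needed to interchange integrals uniformly in $\veps, \veps'$. Once Fubini is secured, the entire argument collapses to the two-line symmetry computation above.
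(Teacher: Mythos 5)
Your proof is correct, and the core mechanism is the same one the paper exploits: the anti-symmetry $\bh\times\bk = -\bk\times\bh$ under the swap $\bh\leftrightarrow\bk$. The organizational details differ. The paper first proves a cross-truncation identity (Theorem~\ref{thm:CurlOfGrad:Truncated}) valid for \emph{arbitrary} $\veps,\veps'>0$ and for any $u\in L^\infty$: it keeps the full second difference $u(\bx+\bh+\bw)-u(\bx+\bh)-u(\bx+\bw)+u(\bx)$ intact, applies Fubini, swaps the integration variables, and ``re-packs'' to obtain $\cC_{\varrho,\veps}\circ\cG_{\varrho,\veps'}u = -\cC_{\varrho,\veps'}\circ\cG_{\varrho,\veps}u$; only then does it pass to the limit via Theorem~\ref{thm:OperatorsWellDefdForSmoothFxns}. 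You instead restrict to the diagonal $\veps=\veps'$ (legitimate since the joint limit from Theorem~\ref{thm:OperatorsWellDefdForSmoothFxns} exists), split the second difference into the two pieces $A$ and $B$, kill $B$ outright by oddness of $\bh\mapsto\bh/|\bh|^2$, and kill $A$ by first symmetrizing in $\bh\leftrightarrow\bk$ and then applying the same oddness argument twice. Both routes are rigorous; the paper's buys the stronger off-diagonal intermediate statement at no extra cost and avoids the auxiliary $A$--$B$ decomposition, while yours makes the vanishing of the truncated composition visible at each fixed $\veps$ rather than only in the limit. One observation worth internalizing: if you keep the full second difference together instead of splitting into $A-B$, the swap $\bh\leftrightarrow\bk$ alone gives $\cC_{\varrho,\veps}\circ\cG_{\varrho,\veps}u = -\cC_{\varrho,\veps}\circ\cG_{\varrho,\veps}u$ immediately (the second difference is symmetric in $\bh,\bk$ while the cross product is anti-symmetric), so the two oddness arguments and the extra averaging step in your treatment of $A$ can be bypassed entirely.
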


This can be shown immediately by applying \Cref{thm:OperatorsWellDefdForSmoothFxns} to the following theorem for the corresponding truncated operators.

\begin{theorem}\label{thm:CurlOfGrad:Truncated}
For any $u \in L^{\infty}(\bbR^d)$ and for any $\veps$, $\veps' > 0$
\begin{equation}\label{eq:CurlOfGrad:Truncated}
    \cC_{\varrho,\veps} \circ \cG_{\varrho,\veps'} u (\xb) = - \cC_{\varrho,\veps'} \circ \cG_{\varrho,\veps} u (\xb)
\end{equation}
for all $\bx \in \bbR^d$.
\end{theorem}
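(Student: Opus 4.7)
The plan is to reduce both sides to absolutely convergent double integrals over $\bbR^d \times \bbR^d$ and then exploit an explicit symmetry under swapping the two integration variables.

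First I would fix $\xb$ and rewrite $\cC_{\varrho,\veps} \circ \cG_{\varrho,\veps'} u(\xb)$ using the substitutions $\ab := \yb - \xb$ in the outer (curl) integral and $\hb$ for the integration variable of the inner gradient, recentered on the relevant base point. Combining the two pieces of $\cG_{\varrho,\veps'} u(\xb+\ab) - \cG_{\varrho,\veps'} u(\xb)$ into a single integral over $\hb$ yields the iterated-integral representation
\begin{equation*}
\cC_{\varrho,\veps} \circ \cG_{\varrho,\veps'} u(\xb) = \int_{|\ab|>\veps} \int_{|\hb|>\veps'} \varrho(\ab)\,\varrho(\hb)\, \frac{\Delta_u(\xb;\ab,\hb)}{|\ab|^2\,|\hb|^2}\, \ab \times \hb \,\rmd\hb\,\rmd\ab,
\end{equation*}
where $\Delta_u(\xb;\ab,\hb) := u(\xb+\ab+\hb) - u(\xb+\ab) - u(\xb+\hb) + u(\xb)$ is manifestly symmetric in $(\ab,\hb)$. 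The analogous formula for $\cC_{\varrho,\veps'} \circ \cG_{\varrho,\veps} u(\xb)$ has the same integrand, but on the region $\{|\ab|>\veps'\} \times \{|\hb|>\veps\}$.

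Next I would justify Fubini. Since $|\Delta_u| \leq 4\Vnorm{u}_{L^{\infty}(\bbR^d)}$ and, by the tail condition in \eqref{assumption:Kernel}, $\hb \mapsto \varrho(\hb)/|\hb|$ is integrable on any region bounded away from the origin, the integrand is dominated on the truncated region by the $L^1$ function $4\Vnorm{u}_{L^{\infty}(\bbR^d)}\,\varrho(\ab)\varrho(\hb)/(|\ab|\,|\hb|)$, so the iterated and double integrals coincide.

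Finally I would relabel $\ab \leftrightarrow \hb$ in the double integral representing $\cC_{\varrho,\veps'} \circ \cG_{\varrho,\veps} u(\xb)$: the swap sends the truncation region $\{|\ab|>\veps'\}\times\{|\hb|>\veps\}$ to $\{|\ab|>\veps\}\times\{|\hb|>\veps'\}$, leaves the scalar factor $\varrho(\ab)\varrho(\hb)\Delta_u(\xb;\ab,\hb)/(|\ab|^2|\hb|^2)$ invariant (since $\varrho$ is radial and $\Delta_u$ is symmetric in its last two arguments), and sends $\ab \times \hb \mapsto -\ab \times \hb$. The outcome is exactly $-\cC_{\varrho,\veps} \circ \cG_{\varrho,\veps'} u(\xb)$, giving \eqref{eq:CurlOfGrad:Truncated}. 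The only obstacle worth noting is bookkeeping: correctly tracking which truncation cutoff is attached to which integration variable through the relabeling step. As a sanity check, the symmetric case $\veps = \veps'$ collapses to $\cC_{\varrho,\veps} \circ \cG_{\varrho,\veps} u = 0$, which is the expected nonlocal analogue of $\nabla \times \nabla u = 0$.
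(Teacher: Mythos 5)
Your proposal is correct and is essentially the same argument as the paper's: unpack the composition into an iterated integral over the symmetric second difference $\Delta_u$, justify the Fubini/Tonelli interchange using the $L^\infty$ bound on $u$ and the tail integrability of $\varrho(\cdot)/|\cdot|$ from \eqref{assumption:Kernel}, and then exploit the antisymmetry of the cross product under the coordinate swap $\ab \leftrightarrow \hb$ to recognize the result as $-\cC_{\varrho,\veps'}\circ\cG_{\varrho,\veps}u(\xb)$. The paper phrases the last step as ``interchange the order of integration, use $\ab\times\bfb=-\bfb\times\ab$, and re-pack,'' which is the same relabeling-plus-antisymmetry move you describe.
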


\begin{proof}
Unpacking the operator $\cG_{\varrho,\veps} \circ \cG_{\varrho,\veps'} u$ and changing coordinates,
\begin{equation*}
    \begin{split}
        \cC_{\varrho,\veps} &\circ \cG_{\varrho,\veps'} u \\
            &= \intdm{\bbR^d \setminus B({\bf 0},\veps) }{\varrho(|\bh|) \frac{\bh}{|\bh|} \times \frac{\cG_{\varrho,\veps'} u(\bx+\bh)-\cG_{\varrho,\veps'} u(\xb)}{|\bh|}  }{\bh} \\
            &= \int_{\bbR^d \setminus B({\bf 0},\veps) }  \frac{\varrho(|\bh|)}{|\bh|} \frac{\bh}{|\bh|} \times \Bigg(
         \int_{\bbR^d \setminus B({\bf 0},\veps')} \frac{\varrho(|\bw|)}{|\bw|} \big(u(\bx+\bh+\bw) - u(\bx+\bh) \big) \frac{\bw}{|\bw|} \, \rmd \bw \\
                &\qquad -\int_{\bbR^d \setminus B({\bf 0},\veps')} \frac{\varrho(|\bw|)}{|\bw|} \big(u(\bx+\bw) - u(\bx) \big) \frac{\bw}{|\bw|} \, \rmd \bw \Bigg) \, \rmd \bh \\
            &= \int_{\bbR^d \setminus B({\bf 0},\veps) }  \frac{\varrho(|\bh|)}{|\bh|} \frac{\bh}{|\bh|} \times \\ 
                & \qquad \Bigg( \int_{\bbR^d \setminus B({\bf 0},\veps')} \frac{\varrho(|\bw|)}{|\bw|} \big(u(\bx+\bh+\bw) - u(\bx+\bh) -u(\bx+\bw) + u(\bx) \big) \frac{\bw}{|\bw|} \, \rmd \bw \Bigg) \, \rmd \bh\,.
    \end{split}
\end{equation*}
We are justified in using linearity of the integral in the last equality, since $\bw \mapsto \varrho(\bw) \frac{|\ub(\bx+\bw)-\ub(\bx)|}{|\bw|}$ is in $ L^1(\bbR^d \setminus B({\bf 0},\veps))$ for any $\bu \in L^{\infty}(\bbR^d)$, for any $\veps > 0$ and for any $\bx \in \bbR^d$. Thus we obtain, 
{\small \begin{equation*}
    \cC_{\varrho,\veps} \circ \cG_{\varrho,\veps'} u = \int_{\bbR^d \setminus B({\bf 0},\veps) } \int_{\bbR^d \setminus B({\bf 0},\veps')} \frac{\varrho(|\bh|)}{|\bh|}  \frac{\varrho(|\bw|)}{|\bw|} \big(u(\bx+\bh+\bw) - u(\bx+\bh) -u(\bx+\bw) + u(\bx) \big) \frac{\bh}{|\bh|} \times \frac{\bw}{|\bw|} \, \rmd \bw \, \rmd \bh\,.
\end{equation*} }
The last expression in the double integral is majorized by
\begin{equation}\label{eq:CurlOfGrad:Majorizer}
    C \chi_{ \{|\bh| \geq \veps\} } \chi_{ \{|\bw| \geq \veps'\} } \Vnorm{\bu}_{L^{\infty}(\bbR^d)} \frac{\varrho (\bh)}{|\bh|} \frac{\varrho (\bw)}{|\bw|} \in L^1( \bbR^d \times \bbR^d)\,.
\end{equation}
Therefore, we can use Fubini's theorem and interchange the order of integration:
{\small \begin{equation*}
    \cC_{\varrho,\veps} \circ \cG_{\varrho,\veps'} u = \int_{\bbR^d \setminus B({\bf 0},\veps') } \int_{\bbR^d \setminus B({\bf 0},\veps)} \frac{\varrho(|\bh|)}{|\bh|}  \frac{\varrho(|\bw|)}{|\bw|} \big(u(\bx+\bh+\bw) - u(\bx+\bh) -u(\bx+\bw) + u(\bx) \big) \frac{\bh}{|\bh|} \times \frac{\bw}{|\bw|} \, \rmd \bh \, \rmd \bw\,.
\end{equation*} }
Now, we use the identity $\ba \times \bfb = - (\ba \times \bfb)$, and ``re-pack" the integrals to obtain the result:
{\small \begin{equation*}
    \begin{split}
   & \cC_{\varrho,\veps} \circ \cG_{\varrho,\veps'} u \\
        &= - \int_{\bbR^d \setminus B({\bf 0},\veps') } \int_{\bbR^d \setminus B({\bf 0},\veps)} \frac{\varrho(|\bh|)}{|\bh|}  \frac{\varrho(|\bw|)}{|\bw|} \big(u(\bx+\bh+\bw) - u(\bx+\bh) -u(\bx+\bw) + u(\bx) \big) \frac{\bw}{|\bw|} \times \frac{\bh}{|\bh|} \, \rmd \bh \, \rmd \bw \\
        &= - \int_{\bbR^d \setminus B({\bf 0},\veps') } \frac{\varrho(|\bw|)}{|\bw|} \frac{\bw}{|\bw|} \times \Bigg( 
          \int_{\bbR^d \setminus B({\bf 0},\veps)}  \frac{\varrho(|\bh|)}{|\bh|} \big(u(\bx+\bh+\bw)-u(\bx+\bw) \big) \frac{\bh}{|\bh|} \, \rmd \bh \\
            &\qquad \qquad - \int_{\bbR^d \setminus B({\bf 0},\veps)}  \frac{\varrho(|\bh|)}{|\bh|} \big(u(\bx+\bh)-u(\bx) \big) \frac{\bh}{|\bh|} \, \rmd \bh \Bigg) \, \rmd \bw\\
            &= - \cC_{\varrho,\veps'} \circ \cG_{\varrho,\veps} u(\bx)\,.
    \end{split}
\end{equation*}}
\end{proof}

\begin{proof}[Proof of Proposition \ref{prop:CurlOfGrad:SmoothFxns}]
Use Theorem \ref{thm:OperatorsWellDefdForSmoothFxns} to take the limit as $\veps$, $\veps' \to 0$ on both sides of \eqref{eq:CurlOfGrad:Truncated}:
\begin{equation*}
    \cC_{\varrho} \circ \cG_{\varrho} u = - \cC_{\varrho} \circ \cG_{\varrho} u\,.
\end{equation*}
\end{proof}

\subsection{The Divergence of the Curl is Zero}
We proceed as in the previous section to prove a nonlocal vector calculus analogue of the identity $\div \curl \bu = 0$.

\begin{proposition}\label{prop:DivOfCurl:SmoothFxns}
The identity
\begin{equation}\label{eq:DivOfCurl:SmoothFxns}
\cD_{\varrho} \circ \cC_{\varrho} \bu(\bx) = 0
\end{equation}
holds for all $\bx \in \bbR^d$ if either
\begin{enumerate}
    \item[1)] $\bu \in C^2_b(\bbR^d;\bbR^d)$ with $d=3$, or
    \item[2)] $\varrho=\varrho_s$ and $\bu \in L^1_{2s}(\bbR^d;\bbR^d) \cap \scC^{2s+\sigma}(\bbR^d;\bbR^d)$ with $d=3$ and for $\sigma > 0$ sufficiently small.
\end{enumerate}
\end{proposition}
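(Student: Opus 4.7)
The strategy is to mirror exactly the two-step approach used for $\cC_\varrho \circ \cG_\varrho u = \mathbf{0}$: first establish a symmetry identity at the level of truncated operators via Fubini, then pass to the truncation limit using Theorem \ref{thm:OperatorsWellDefdForSmoothFxns}.

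First, I would prove a truncated counterpart: for any $\bu \in L^{\infty}(\bbR^d;\bbR^d)$ with $d=3$ and any $\veps, \veps' > 0$,
\begin{equation*}
    \cD_{\varrho,\veps} \circ \cC_{\varrho,\veps'} \bu(\bx) = - \cD_{\varrho,\veps'} \circ \cC_{\varrho,\veps} \bu(\bx)\,.
\end{equation*}
To show this, I would unpack the composition by changing variables ($\bh$ for the outer divergence increment, $\bw$ for the inner curl increment) and, as in the proof of Theorem \ref{thm:CurlOfGrad:Truncated}, use linearity on the inner integral (justified because for fixed $\bx$ the map $\bw \mapsto \varrho(|\bw|)|\bu(\bx+\bw)-\bu(\bx)|/|\bw|$ lies in $L^1(\bbR^d \setminus B(\mathbf{0},\veps'))$ for bounded $\bu$) to obtain a symmetric double integral of the form
\begin{equation*}
    \int_{|\bh|\geq \veps} \int_{|\bw|\geq \veps'} \frac{\varrho(|\bh|)}{|\bh|} \frac{\varrho(|\bw|)}{|\bw|} \, \frac{\bh}{|\bh|} \cdot \left( \frac{\bw}{|\bw|} \times \Delta \bu(\bx;\bh,\bw) \right) \rmd \bw \, \rmd \bh\,,
\end{equation*}
where $\Delta \bu(\bx;\bh,\bw) := \bu(\bx+\bh+\bw) - \bu(\bx+\bh) - \bu(\bx+\bw) + \bu(\bx)$.

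Next, I would swap the order of integration by Fubini, which is justified by exactly the same majorizer as in \eqref{eq:CurlOfGrad:Majorizer}, namely $C\Vnorm{\bu}_{L^\infty}\chi_{\{|\bh|\geq\veps\}}\chi_{\{|\bw|\geq\veps'\}}\varrho(|\bh|)\varrho(|\bw|)/(|\bh||\bw|) \in L^1(\bbR^d \times \bbR^d)$. The key algebraic identity replacing $\ba \times \bfb = -\bfb \times \ba$ used in the curl-of-gradient proof is the scalar triple product antisymmetry $\ba \cdot (\bfb \times \bc) = -\bfb \cdot (\ba \times \bc)$, applied to $\ba = \bh/|\bh|$, $\bfb = \bw/|\bw|$, $\bc = \Delta \bu(\bx;\bh,\bw)$. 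This yields exactly $-\cD_{\varrho,\veps'} \circ \cC_{\varrho,\veps} \bu(\bx)$ after re-packing the integrals in divergence-of-curl form.

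Finally, to deduce the proposition, I would apply Theorem \ref{thm:OperatorsWellDefdForSmoothFxns} (which covers both hypotheses (1) and (2) of the proposition) to take $\veps, \veps' \to 0$ on both sides of the truncated identity, obtaining $\cD_{\varrho} \circ \cC_{\varrho} \bu = -\cD_{\varrho} \circ \cC_{\varrho} \bu$ and hence $\cD_{\varrho} \circ \cC_{\varrho} \bu(\bx) = 0$ for every $\bx \in \bbR^3$. The only genuine subtlety is the same one encountered earlier: the Fubini step requires the uniform-in-$\veps,\veps'$ $L^1$ majorizer, and the passage to the limit requires the dominated convergence argument already packaged inside Theorem \ref{thm:OperatorsWellDefdForSmoothFxns}; no new analytic difficulty arises because the cross-product structure only changes the algebra, not the integrability.
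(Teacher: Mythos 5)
Your proposal is correct and matches the paper's proof essentially step for step: the paper also proves the truncated antisymmetry $\cD_{\varrho,\veps} \circ \cC_{\varrho,\veps'} \bu = -\cD_{\varrho,\veps'} \circ \cC_{\varrho,\veps} \bu$ via the same $L^\infty$-justified linearity, the same $L^1$ majorizer for Fubini, and the same scalar triple product antisymmetry (written there as $(\ba\times\bfb)\cdot\bc = -(\bc\times\bfb)\cdot\ba$, equivalent to your $\ba\cdot(\bfb\times\bc) = -\bfb\cdot(\ba\times\bc)$), then passes $\veps,\veps'\to 0$ via Theorem \ref{thm:OperatorsWellDefdForSmoothFxns}.
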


This can be shown immediately by applying \Cref{thm:OperatorsWellDefdForSmoothFxns} to the following theorem for the corresponding truncated operators.

\begin{theorem}\label{thm:DivOfCurl:Truncated}
For any $\bu \in L^{\infty}(\bbR^d;\bbR^d)$ with $d=3$ and for any $\veps$, $\veps' > 0$
\begin{equation}\label{eq:DivOfCurl:Truncated}
    \cD_{\varrho,\veps} \circ \cC_{\varrho,\veps'} \bu (\xb) = - \cD_{\varrho,\veps'} \circ \cC_{\varrho,\veps} \bu (\xb)
\end{equation}
for all $\bx \in \bbR^d$.
\end{theorem}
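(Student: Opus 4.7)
The plan is to mirror the proof of Theorem \ref{thm:CurlOfGrad:Truncated} essentially step for step, replacing the antisymmetry identity for the cross product with an appropriate scalar triple product identity. First I would unpack the outer divergence and substitute the definition of $\cC_{\varrho,\veps'}\bu(\bx+\bh)$ and $\cC_{\varrho,\veps'}\bu(\bx)$, obtaining
\begin{equation*}
\cD_{\varrho,\veps} \circ \cC_{\varrho,\veps'}\bu(\bx) = \int_{\bbR^d \setminus B({\bf 0},\veps)} \frac{\varrho(|\bh|)}{|\bh|^2} \bh \cdot \Bigl( \cC_{\varrho,\veps'}\bu(\bx+\bh) - \cC_{\varrho,\veps'}\bu(\bx) \Bigr) \, \rmd \bh.
\end{equation*}
Expanding the inner curls and combining by linearity of the integral produces a single double integral whose integrand involves the symmetric second difference $\mathbf{S}(\bx,\bh,\bw) := \bu(\bx+\bh+\bw) - \bu(\bx+\bh) - \bu(\bx+\bw) + \bu(\bx)$:
\begin{equation*}
\cD_{\varrho,\veps} \circ \cC_{\varrho,\veps'}\bu(\bx) = \int_{|\bh| \geq \veps} \int_{|\bw| \geq \veps'} \frac{\varrho(|\bh|)\varrho(|\bw|)}{|\bh|^2 |\bw|^2} \Bigl[ \bw \times \mathbf{S}(\bx,\bh,\bw) \Bigr] \cdot \bh \, \rmd \bw \, \rmd \bh.
\end{equation*}
The rearrangement above uses linearity of the integral over $\bw$, which is justified exactly as in the proof of Theorem \ref{thm:CurlOfGrad:Truncated} since $\bw \mapsto \varrho(|\bw|) |\bu(\bx+\bw)-\bu(\bx)|/|\bw|$ is in $L^1(\bbR^d \setminus B({\bf 0},\veps'))$ whenever $\bu \in L^\infty(\bbR^d)$.

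Next I would apply Fubini's theorem to interchange the order of integration, justified by the $L^1(\bbR^d \times \bbR^d)$ majorant
\begin{equation*}
C \chi_{\{|\bh|\geq \veps\}} \chi_{\{|\bw|\geq \veps'\}} \|\bu\|_{L^\infty(\bbR^d)} \frac{\varrho(|\bh|)}{|\bh|} \frac{\varrho(|\bw|)}{|\bw|},
\end{equation*}
which is the exact analogue of \eqref{eq:CurlOfGrad:Majorizer}. The crucial algebraic step is now the scalar triple product identity $(\bw \times \mathbf{S}) \cdot \bh = \mathbf{S} \cdot (\bh \times \bw) = -(\bh \times \mathbf{S}) \cdot \bw$, which replaces the simple antisymmetry $\ba \times \bfb = -(\bfb \times \ba)$ used for the curl-of-gradient case. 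Combining this identity with Fubini's theorem and relabeling produces
\begin{equation*}
\cD_{\varrho,\veps} \circ \cC_{\varrho,\veps'}\bu(\bx) = - \int_{|\bw| \geq \veps'} \int_{|\bh| \geq \veps} \frac{\varrho(|\bw|)\varrho(|\bh|)}{|\bw|^2 |\bh|^2} \Bigl[ \bh \times \mathbf{S}(\bx,\bw,\bh) \Bigr] \cdot \bw \, \rmd \bh \, \rmd \bw,
\end{equation*}
where I use the symmetry $\mathbf{S}(\bx,\bh,\bw) = \mathbf{S}(\bx,\bw,\bh)$. Re-packing the integrals by reversing the linearity step yields precisely $-\cD_{\varrho,\veps'} \circ \cC_{\varrho,\veps}\bu(\bx)$, which completes the proof.

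There is no substantial obstacle here: the argument is a direct adaptation of Theorem \ref{thm:CurlOfGrad:Truncated}, and the only step that requires real attention is selecting the correct form of the scalar triple product identity so that the symmetric second difference $\mathbf{S}$ ends up multiplied by the antisymmetric factor $[(\bw \times \cdot\,)\cdot\bh + (\bh\times \cdot\,)\cdot\bw] = 0$. The integrability of the majorant is uniform in $\veps,\veps' > 0$ (for each fixed pair) because once the diagonal is removed the kernel $\varrho(|\cdot|)/|\cdot|$ is integrable at infinity by \eqref{assumption:Kernel}, so no additional regularity on $\bu$ beyond boundedness is needed.
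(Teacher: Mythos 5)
Your proof is correct and follows essentially the same route as the paper: unpack both operators, combine by linearity into a single double integral over the second difference $\mathbf{S}$, justify Fubini with the same $L^1(\bbR^d\times\bbR^d)$ majorant, apply the scalar triple product identity $(\bw\times\mathbf{S})\cdot\bh = -(\bh\times\mathbf{S})\cdot\bw$, and re-pack to identify $-\cD_{\varrho,\veps'}\circ\cC_{\varrho,\veps}\bu$. The paper does not explicitly invoke the symmetry $\mathbf{S}(\bx,\bh,\bw)=\mathbf{S}(\bx,\bw,\bh)$ (it instead re-groups the four terms of $\mathbf{S}$ directly when re-packing), but that is a cosmetic difference and your version is equally valid.
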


\begin{proof}
Unpacking the operator $\cD_{\varrho,\veps} \circ \cC_{\varrho,\veps'} \bu$ and changing coordinates,
{\small \begin{equation*}
    \begin{split}
        &\cD_{\varrho,\veps} \circ \cC_{\varrho,\veps'} \bu \\
            &= \intdm{\bbR^d \setminus B({\bf 0},\veps) }{\varrho(|\bh|) \frac{\cC_{\varrho,\veps'} \bu(\bx+\bh)-\cC_{\varrho,\veps'} \bu(\xb)}{|\bh|} \cdot \frac{\bh}{|\bh|}  }{\bh} \\
            &= \int_{\bbR^d \setminus B({\bf 0},\veps) }  \frac{\varrho(|\bh|)}{|\bh|} \Bigg(
              \int_{\bbR^d \setminus B({\bf 0},\veps')} \frac{\varrho(|\bw|)}{|\bw|} \frac{\bw}{|\bw|} \times \Big( \bu(\bx+\bh+\bw) - \bu(\bx+\bh) \Big) \, \rmd \bw \\
                &\qquad -\int_{\bbR^d \setminus B({\bf 0},\veps')} \frac{\varrho(|\bw|)}{|\bw|} \frac{\bw}{|\bw|} \times \Big(\bu(\bx+\bw) - \bu(\bx) \Big)  \, \rmd \bw \Bigg) \cdot \frac{\bh}{|\bh|} \, \rmd \bh \\
            &= \int_{\bbR^d \setminus B({\bf 0},\veps) }  \frac{\varrho(|\bh|)}{|\bh|} \Bigg(\int_{\bbR^d \setminus B({\bf 0},\veps')} \frac{\varrho(|\bw|)}{|\bw|} \frac{\bw}{|\bw|} \times \big(\bu(\bx+\bh+\bw) - \bu(\bx+\bh) -\bu(\bx+\bw) + \bu(\bx) \big)  \, \rmd \bw \Bigg) \cdot \frac{\bh}{|\bh|} \, \rmd \bh\,.
    \end{split}
\end{equation*}}
We are justified in using linearity of the integral in the last equality, since $\bw \mapsto \varrho(\bw) \frac{|\ub(\bx+\bw)-\ub(\bx)|}{|\bw|}$ is in $L^1(\bbR^d \setminus B({\bf 0},\veps))$ for any $\bu \in L^{\infty}(\bbR^d)$, for any $\veps > 0$ and for any $\bx \in \bbR^d$. Thus we obtain 
{\small \begin{equation*}
    \cD_{\varrho,\veps} \circ \cC_{\varrho,\veps'} \bu = \int_{\bbR^d \setminus B({\bf 0},\veps) } \int_{\bbR^d \setminus B({\bf 0},\veps')} \frac{\varrho(|\bh|)}{|\bh|}  \frac{\varrho(|\bw|)}{|\bw|} \Bigg( \frac{\bw}{|\bw|} \times \big(\bu(\bx+\bh+\bw) - \bu(\bx+\bh) - \bu(\bx+\bw) + \bu(\bx) \big) \Bigg) \cdot \frac{\bh}{|\bh|}  \, \rmd \bw \, \rmd \bh\,.
\end{equation*} }
The last expression in the double integral is majorized by
\begin{equation}\label{eq:DivOfCurl:Majorizer}
    C \chi_{ \{|\bh| \geq \veps\} } \chi_{ \{|\bw| \geq \veps'\} } \Vnorm{\bu}_{L^{\infty}(\bbR^d)} \frac{\varrho (\bh)}{|\bh|} \frac{\varrho (\bw)}{|\bw|} \in L^1( \bbR^d \times \bbR^d)\,. 
\end{equation}
Therefore, we can use Fubini's theorem and interchange the order of integration:
{\small \begin{equation*}
    \cD_{\varrho,\veps} \circ \cC_{\varrho,\veps'} \bu = \int_{\bbR^d \setminus B({\bf 0},\veps') } \int_{\bbR^d \setminus B({\bf 0},\veps)} \frac{\varrho(|\bh|)}{|\bh|}  \frac{\varrho(|\bw|)}{|\bw|} \Bigg( \frac{\bw}{|\bw|} \times \big(\bu(\bx+\bh+\bw) - \bu(\bx+\bh) - \bu(\bx+\bw) + \bu(\bx) \big) \Bigg) \cdot \frac{\bh}{|\bh|}  \, \rmd \bh \, \rmd \bw\,.
\end{equation*}}
Now, we use the identity $(\ba \times \bfb) \cdot \bc = (\bfb \times \bc) \cdot \ba = - (\bc \times \bfb) \cdot \ba$, and ``re-pack" the integrals to obtain the result:
{\small \begin{equation*}
    \begin{split}
    &\cD_{\varrho,\veps} \circ \cC_{\varrho,\veps'} \bu  \\
        &= - \int_{\bbR^d \setminus B({\bf 0},\veps') } \int_{\bbR^d \setminus B({\bf 0},\veps)} \frac{\varrho(|\bh|)}{|\bh|}  \frac{\varrho(|\bw|)}{|\bw|} \Bigg( \frac{\bh}{|\bh|} \times \big(\bu(\bx+\bh+\bw) - \bu(\bx+\bh) - \bu(\bx+\bw) + \bu(\bx) \big) \Bigg) \cdot \frac{\bw}{|\bw|}  \, \rmd \bh \, \rmd \bw \\
        &= - \int_{\bbR^d \setminus B({\bf 0},\veps') } \frac{\varrho(|\bw|)}{|\bw|}
        \Bigg( \int_{\bbR^d \setminus B({\bf 0},\veps)}  \frac{\varrho(|\bh|)}{|\bh|} \frac{\bh}{|\bh|} \times  \big( \bu(\bx+\bh+\bw)-\bu(\bx+\bw) \big)  \, \rmd \bh \\
            &\qquad \qquad - \int_{\bbR^d \setminus B({\bf 0},\veps)}  \frac{\varrho(|\bh|)}{|\bh|} \frac{\bh}{|\bh|} \times  \big( \bu(\bx+\bh)-\bu(\bx) \big)  \, \rmd \bh \Bigg) \cdot  \frac{\bw}{|\bw|} \, \rmd \bw = - \cD_{\varrho,\veps'} \circ \cC_{\varrho,\veps} \bu(\bx)\,.
    \end{split}
\end{equation*}}
\end{proof}
\begin{proof}[Proof of Proposition \ref{prop:DivOfCurl:SmoothFxns}]
Use Theorem \ref{thm:OperatorsWellDefdForSmoothFxns} to take the limit as $\veps$, $\veps' \to 0$ on both sides of \eqref{eq:DivOfCurl:Truncated}:
\begin{equation*}
    \cD_{\varrho} \circ \cC_{\varrho} \bu = - \cD_{\varrho} \circ \cC_{\varrho} \bu\,.
\end{equation*}
\end{proof}

\subsection{Curl of Curl Identity}

{We again proceed by computing the composition of the curl operator with itself in the truncated case and then using Theorem \ref{thm:OperatorsWellDefdForSmoothFxns} to prove that the same identity holds in the limit.}

\begin{proposition}\label{prop:CurlOfCurl:SmoothFxns}
The identity
\begin{equation}\label{eq:goal}
\cC_{\varrho} \circ \cC_{\varrho}\ub(\xb) = \cG_{\varrho} \circ \cD_{\varrho} \ub(\xb) - \cD_{\varrho} \circ \cG_{\varrho} \ub(\xb)
\end{equation}
holds for all $\bx \in \bbR^d$ if either
\begin{enumerate}
    \item[1)] $\bu \in C^2_b(\bbR^d;\bbR^d)$ with $d=3$, or
    \item[2)] $\varrho=\varrho_s$ and $\bu \in L^1_{2s}(\bbR^d;\bbR^d) \cap \scC^{2s+\sigma}(\bbR^d;\bbR^d)$ with $d=3$ and for $\sigma > 0$ sufficiently small.
\end{enumerate}
\end{proposition}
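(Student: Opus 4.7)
Following the pattern of Propositions \ref{prop:CurlOfGrad:SmoothFxns} and \ref{prop:DivOfCurl:SmoothFxns}, the plan is to establish a truncated version of \eqref{eq:goal} first and then pass to the limit $\veps, \veps' \to 0$ using Theorem \ref{thm:OperatorsWellDefdForSmoothFxns}, which covers each of the three compositions appearing in \eqref{eq:goal} under hypotheses (1) and (2).

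Concretely, writing $\mathbf{\Xi}(\bh,\bw) := \bu(\bx+\bh+\bw) - \bu(\bx+\bh) - \bu(\bx+\bw) + \bu(\bx)$ for the second-order difference (which is evidently symmetric under $\bh \leftrightarrow \bw$), I would first prove the truncated identity
\begin{equation*}
    \cC_{\varrho,\veps} \circ \cC_{\varrho,\veps'} \bu(\bx) = \cG_{\varrho,\veps'} \circ \cD_{\varrho,\veps} \bu(\bx) - \cD_{\varrho,\veps} \circ \cG_{\varrho,\veps'} \bu(\bx)
\end{equation*}
for all $\bu \in L^{\infty}(\bbR^3;\bbR^3)$ and any $\veps, \veps' > 0$. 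The argument unpacks $\cC_{\varrho,\veps} \circ \cC_{\varrho,\veps'} \bu(\bx)$ exactly as in the proof of Theorem \ref{thm:DivOfCurl:Truncated}, using the same $L^1$ majorizer \eqref{eq:DivOfCurl:Majorizer} to justify Fubini's theorem, yielding the double integral
\begin{equation*}
    \int_{|\bh|>\veps} \int_{|\bw|>\veps'} \frac{\varrho(|\bh|) \varrho(|\bw|)}{|\bh| |\bw|} \, \frac{\bh}{|\bh|} \times \left( \frac{\bw}{|\bw|} \times \mathbf{\Xi}(\bh,\bw) \right) \, \rmd \bw \, \rmd \bh\,.
\end{equation*}

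The essential new step is to apply the BAC-CAB vector identity $\ba \times (\bfb \times \bc) = \bfb(\ba \cdot \bc) - \bc(\ba \cdot \bfb)$ to the integrand, which splits the double integral into two pieces. The piece involving $-\bigl(\frac{\bh}{|\bh|} \cdot \frac{\bw}{|\bw|}\bigr) \mathbf{\Xi}(\bh,\bw)$ reassembles directly into $-\cD_{\varrho,\veps} \circ \cG_{\varrho,\veps'} \bu(\bx)$. For the piece involving $\bigl(\frac{\bh}{|\bh|} \cdot \mathbf{\Xi}(\bh,\bw)\bigr) \frac{\bw}{|\bw|}$, a swap of the order of integration via Fubini (together, if needed, with a relabeling of dummy variables exploiting the symmetry $\mathbf{\Xi}(\bh,\bw) = \mathbf{\Xi}(\bw,\bh)$) recognizes it as $\cG_{\varrho,\veps'} \circ \cD_{\varrho,\veps} \bu(\bx)$. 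Passing to the limit $\veps, \veps' \to 0$ by applying Theorem \ref{thm:OperatorsWellDefdForSmoothFxns} separately to each of the three compositions then yields \eqref{eq:goal}.

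The main difficulty, as in the preceding subsections, is bookkeeping rather than deep analysis: carefully tracking which truncation parameter is associated with which integration variable through the reordering, and verifying the $L^1$ domination needed for Fubini (which follows the template used in Theorems \ref{thm:CurlOfGrad:Truncated} and \ref{thm:DivOfCurl:Truncated}). The only genuinely new ingredients beyond those proofs are the BAC-CAB identity and the exploitation of the $\bh \leftrightarrow \bw$ symmetry of $\mathbf{\Xi}$.
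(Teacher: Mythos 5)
Your proposal is correct and follows essentially the same route as the paper's proof: establish the truncated identity via the triple-product (BAC-CAB) expansion and Fubini, then pass to the limit using Theorem \ref{thm:OperatorsWellDefdForSmoothFxns}. The only cosmetic difference is that you bundle the computation into the second-order difference $\mathbf{\Xi}(\bh,\bw)$ before applying BAC-CAB, whereas the paper expands the four resulting terms explicitly and regroups them into pieces $I$ and $II$ using the tensor identity $(\ba\cdot\bfb)\bc = (\bc\otimes\ba)\bfb$; this is just bookkeeping and the content is identical.
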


This can be shown immediately by applying \Cref{thm:OperatorsWellDefdForSmoothFxns} to the following version for the corresponding truncated operators.

\begin{theorem}\label{thm:CurlOfCurl:Truncated}
For any $\bu \in L^{\infty}(\bbR^d;\bbR^d)$ with $d=3$ and for any $\veps$, $\veps' > 0$
\begin{equation}\label{eq:CurlCurlId:Truncated}
    \cC_{\varrho,\veps} \circ \cC_{\varrho,\veps'}\ub(\xb) = \cG_{\varrho,\veps'} \circ \cD_{\varrho,\veps} \ub(\xb) - \cD_{\varrho,\veps} \circ \cG_{\varrho,\veps'} \ub(\xb)
\end{equation}
\end{theorem}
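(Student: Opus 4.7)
My plan mirrors the approach of Theorems~\ref{thm:CurlOfGrad:Truncated} and~\ref{thm:DivOfCurl:Truncated}: unpack the LHS of \eqref{eq:CurlCurlId:Truncated} as an iterated integral, convert to a double integral via Fubini, apply a single pointwise vector identity, and re-pack the two resulting pieces as the two terms on the RHS. Writing
\begin{equation*}
\Delta^2\bu(\bx) := \bu(\bx+\bh+\bw) - \bu(\bx+\bh) - \bu(\bx+\bw) + \bu(\bx),
\end{equation*}
I would substitute the definition of $\cC_{\varrho,\veps'}\bu$ into $\cC_{\varrho,\veps}$, use linearity under the inner integral (justified by the $L^1$ bound on $\bw \mapsto \varrho(|\bw|)|\bw|^{-1}(\bu(\bx+\bh+\bw)-\bu(\bx+\bh))$ for each fixed $\bh$, just as in the preceding proofs), and apply Fubini against the majorizer \eqref{eq:DivOfCurl:Majorizer} to obtain
\begin{equation*}
\cC_{\varrho,\veps}\circ\cC_{\varrho,\veps'}\bu(\bx) = \iint \frac{\varrho(|\bh|)\varrho(|\bw|)}{|\bh||\bw|} \frac{\bh}{|\bh|} \times \left(\frac{\bw}{|\bw|}\times \Delta^2\bu(\bx)\right) \rmd\bw\,\rmd\bh,
\end{equation*}
where the integration is over $\{|\bh|\geq\veps\}\times\{|\bw|\geq\veps'\}$.

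The key step is the pointwise triple-product identity $\ba\times(\bfb\times\bc) = \bfb(\ba\cdot\bc) - \bc(\ba\cdot\bfb)$, applied with $\ba = \bh/|\bh|$, $\bfb = \bw/|\bw|$, and $\bc = \Delta^2\bu$, which gives
\begin{equation*}
\frac{\bh}{|\bh|}\times\!\left(\frac{\bw}{|\bw|}\times \Delta^2\bu\right) = \frac{\bw}{|\bw|}\!\left(\frac{\bh}{|\bh|}\cdot\Delta^2\bu\right) - \Delta^2\bu\!\left(\frac{\bh}{|\bh|}\cdot\frac{\bw}{|\bw|}\right).
\end{equation*}
In parallel I would unpack the two terms on the RHS by substitution and Fubini under the same majorizer. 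Using the elementary identity $(\ba\otimes\bc)\bfb = \ba(\bc\cdot\bfb)$ for the $\cD\circ\cG$ piece and the fact that $\cG$ acting on a scalar is just pointwise multiplication by $\bw/|\bw|$ for the $\cG\circ\cD$ piece, these become
\begin{equation*}
\cG_{\varrho,\veps'}\circ\cD_{\varrho,\veps}\bu(\bx) = \iint \frac{\varrho(|\bh|)\varrho(|\bw|)}{|\bh||\bw|} \frac{\bw}{|\bw|}\!\left(\frac{\bh}{|\bh|}\cdot\Delta^2\bu(\bx)\right) \rmd\bh\,\rmd\bw,
\end{equation*}
\begin{equation*}
\cD_{\varrho,\veps}\circ\cG_{\varrho,\veps'}\bu(\bx) = \iint \frac{\varrho(|\bh|)\varrho(|\bw|)}{|\bh||\bw|}\, \Delta^2\bu(\bx)\!\left(\frac{\bh}{|\bh|}\cdot\frac{\bw}{|\bw|}\right) \rmd\bw\,\rmd\bh.
\end{equation*}
Matching the two halves of the BAC-CAB decomposition to these two double integrals yields \eqref{eq:CurlCurlId:Truncated} directly.

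The main obstacle is not analytic but \emph{notational bookkeeping}: three distinct operators with three different tensorial outputs ($\bbR^3$, $\bbR^{3\times 3}$, $\bbR^3$) appear simultaneously, so one must correctly carry the tensor contractions through each composition in order for the two BAC-CAB pieces to line up exactly with their counterparts on the RHS. In particular, verifying that the nonlocal divergence of the rank-one tensor $\cG_{\varrho,\veps'}\bu$ produces the scalar factor $(\bh/|\bh|)\cdot(\bw/|\bw|)$ (and not some other contraction) is where one is most likely to make a sign or indexing error. At the truncated level all integrands are absolutely integrable, bounded by the $L^1(\bbR^3\times\bbR^3)$ majorizer from \eqref{eq:DivOfCurl:Majorizer}, so no principal-value or delicate cancellation issues arise; these are handled separately when passing to the limit via Theorem~\ref{thm:OperatorsWellDefdForSmoothFxns} in Proposition~\ref{prop:CurlOfCurl:SmoothFxns}.
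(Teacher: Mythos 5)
Your proposal is correct and matches the paper's proof in essence: both unpack the composed truncated operators into absolutely convergent double integrals, apply the BAC--CAB triple-product identity pointwise to the integrand, and then re-pack the two resulting pieces (after a Fubini swap on the $\cG\circ\cD$ piece) as $\cG_{\varrho,\veps'}\circ\cD_{\varrho,\veps}\bu$ and $\cD_{\varrho,\veps}\circ\cG_{\varrho,\veps'}\bu$. The only cosmetic difference is that you compress $\bu(\bx+\bh+\bw)-\bu(\bx+\bh)-\bu(\bx+\bw)+\bu(\bx)$ into a single second-difference symbol from the outset, whereas the paper carries the four terms separately before grouping.
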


\begin{proof}
We require the following ``triple product'' identity,
\begin{equation}\label{eq:triple_product}
\ab \times (\bfb \times \cb) = (\ab \cdot \cb) \bfb - (\ab \cdot \bfb) \cb.  
\end{equation}

Unpacking the operator $\cC_{\varrho,\veps} \circ \cC_{\varrho,\veps'} \ub$ using the definition of $\cC_{\varrho,\veps}$ and changing coordinates, 
\begin{equation*}
\begin{split}
\cC_{\varrho,\veps} \circ \cC_{\varrho,\veps'}\ub(\xb) 
&=
\cC_{\varrho,\veps} (\cC_{\varrho,\veps'}\ub)(\xb) 
\\
&=
\int_{\bbR^d \setminus B({\bf 0},\veps)} \varrho (\bh) \frac{\bh}{|\bh|} \times \frac{(\cC_{\varrho,\veps'}\ub(\bx+\bh)-\cC_{\varrho,\veps'}\ub(\xb))}{|\bh|} \, \rmd \bh
\\
&=
\int_{\bbR^d \setminus B({\bf 0},\veps)} \frac{\varrho(\bh)}{|\bh|}  \frac{\bh}{|\bh|} \\
&\qquad 
\Bigg(
\int_{\bbR^d \setminus B({\bf 0},\veps')} \frac{\varrho(\bw)}{|\bw|}  \frac{\bw}{|\bw|} \times (\ub(\bx+\bh+\bw)-\ub(\bx+\bh)) \, \rmd \bw
\\
&\qquad \qquad \qquad \qquad - 
\int_{\bbR^d \setminus B({\bf 0},\veps')} \frac{\varrho(\bw)}{|\bw|}  \frac{\bw}{|\bw|} \times (\ub(\bx+\bw)-\ub(\xb)) \, \rmd\bw
\Bigg) \, \rmd\bh
\\
&=
 \int_{\bbR^d \setminus B({\bf 0},\veps)} \frac{\varrho(\bh)}{|\bh|}  \frac{\bh}{|\bh|} \times
 \int_{\bbR^d \setminus B({\bf 0},\veps')}
\frac{\varrho(\bw)}{|\bw|} \\
&\qquad \Bigg( \frac{\bw}{|\bw|} \times (\ub(\bx+\bh+\bw)-\ub(\bx+\bh))
-  \frac{\bw}{|\bw|} \times (\ub(\bx+\bw)-\ub(\xb))
\Bigg) \, \rmd \bw  \, \rmd \bh.
\end{split}
\end{equation*}

We are justified in using linearity of the integral in the last equality, since $\bw \mapsto \varrho(\bw) \frac{|\ub(\bx+\bw)-\ub(\bx)|}{|\bw|}$ is in $L^1(\bbR^d \setminus B({\bf 0},\veps))$ for any $\bu \in L^{\infty}(\bbR^d)$, for any $\veps > 0$ and for any $\bx \in \bbR^d$.

Some of these terms do not depend on $\bw$, so we can write
\begin{align*}
\cC_{\varrho,\veps} \circ \cC_{\varrho,\veps'}\ub(\xb)
&=
 \int_{\bbR^d \setminus B({\bf 0},\veps)} \frac{\varrho(\bh)}{|\bh|}
 \int_{\bbR^d \setminus B({\bf 0},\veps')}
\frac{\varrho(\bw)}{|\bw|}  \left[\frac{\bh}{|\bh|} \times \left( \frac{\bw}{|\bw|} \times (\ub(\bx+\bh+\bw)-\ub(\bx+\bh)) \right)\right. \\
        &\qquad \qquad  - \left.\frac{\bh}{|\bh|} \times
\left(  \frac{\bw}{|\bw|} \times (\ub(\bx+\bw)-\ub(\xb))
\right)\right] \, \rmd \bw  \, \rmd \bh.
\end{align*}

Now we use the identity \eqref{eq:triple_product} to write
\begin{align*}
\cC_{\varrho,\veps} \circ \cC_{\varrho,\veps'}\ub(\xb) 
=
\int_{\bbR^d \setminus B({\bf 0},\veps)} &  \frac{\varrho (\bh)}{|\bh|} \int_{\bbR^d \setminus B({\bf 0},\veps')} \frac{\varrho (\bw)}{|\bw|}  \Bigg[ 
\\
&\left( \frac{\bh}{|\bh|} \cdot (\ub(\bx+\bh+\bw)-\ub(\bx+\bh)) \right)
\frac{\bw}{|\bw|}  
\\
-
&\left( \frac{\bh}{|\bh|} \cdot \frac{\bw}{|\bw|}  \right)
(\ub(\bx+\bh+\bw)-\ub(\bx+\bh))
\\
-
&\left( \frac{\bh}{|\bh|} \cdot (\ub(\bx+\bw)-\ub(\bx)) \right)
\frac{\bw}{|\bw|}
\\
+
&\left( \frac{\bh}{|\bh|} \cdot \frac{\bw}{|\bw|}  \right)
(\ub(\bx+\bw)-\ub(\bx))
\Bigg] \, \rmd \bw  \, \rmd \bh
\\
=
\int_{\bbR^d \setminus B({\bf 0},\veps)} &  \frac{\varrho (\bh)}{|\bh|} \int_{\bbR^d \setminus B({\bf 0},\veps')}  \frac{\varrho (\bw)}{|\bw|}
\Bigg[
\\
&\left( \frac{\bh}{|\bh|} \cdot (\ub(\bx+\bh+\bw)-\ub(\bx+\bh)) \right)
\frac{\bw}{|\bw|}  
\\
-
&\left( \frac{\bh}{|\bh|} \cdot (\ub(\bx+\bw)-\ub(\bx)) \right)
\frac{\bw}{|\bw|}  
\\
-
&\left( \frac{\bh}{|\bh|} \cdot \frac{\bw}{|\bw|}  \right)
(\ub(\bx+\bh+\bw)-\ub(\bx+\bh))
\\
+
&\left( \frac{\bh}{|\bh|} \cdot \frac{\bw}{|\bw|}  \right)
(\ub(\bx+\bw)-\ub(\bx))
\Bigg] \, \rmd \bw  \, \rmd \bh\,.
\end{align*}
Since the last expression in the double integral is majorized by
\begin{equation}\label{eq:CurlOfCurl:Majorizer}
    C \chi_{ \{|\bh| \geq \veps\} } \chi_{ \{|\bw| \geq \veps'\} } \Vnorm{\bu}_{L^{\infty}(\bbR^d)} \frac{\varrho (\bh)}{|\bh|} \frac{\varrho (\bw)}{|\bw|} \in L^1( \bbR^d \times \bbR^d)
\end{equation}
we can use linearity of the double integral to separate the two former terms from the latter two. This gives
{\small \begin{align*}
\cC_{\varrho,\veps} \circ \cC_{\varrho,\veps'}\ub(\xb)
=
& \int_{\bbR^d \setminus B({\bf 0},\veps)} \int_{\bbR^d \setminus B({\bf 0},\veps')} \frac{\varrho (\bh)}{|\bh|} \frac{\varrho (\bw)}{|\bw|} \Bigg[
\\
& \qquad \left( \frac{\bh}{|\bh|} \cdot (\ub(\bx+\bh+\bw)-\ub(\bx+\bh)) \right)
\frac{\bw}{|\bw|}
-
\left( \frac{\bh}{|\bh|} \cdot (\ub(\bx+\bw)-\ub(\bx)) \right)
\frac{\bw}{|\bw|}  \Bigg] \, \rmd \bw \, \rmd \bh
\\
&\quad - \int_{\bbR^d \setminus B({\bf 0},\veps)} \int_{\bbR^d \setminus B({\bf 0},\veps')} \frac{\varrho (\bh)}{|\bh|} \frac{\varrho (\bw)}{|\bw|} \Bigg[ \\
&\qquad
\left( \frac{\bh}{|\bh|} \cdot \frac{\bw}{|\bw|}  \right)
(\ub(\bx+\bh+\bw)-\ub(\bx+\bh))
- \left( \frac{\bh}{|\bh|} \cdot \frac{\bw}{|\bw|}  \right)
(\ub(\bx+\bw)-\ub(\bx))
\Bigg] \, \rmd \bw  \, \rmd \bh \\
&:=
I \, (\text{first two lines above}) - II \, (\text{last two lines above}).
\end{align*} }
Now, we use the vector identity
\begin{equation*}
    (\ba \cdot \bfb) \bc = (\bc \otimes \ba) \bfb
\end{equation*}
and write
 {\small \begin{equation}\label{eq:CurlOfCurl:Pf1}
    \begin{split}
        II &= \int_{\bbR^d \setminus B({\bf 0},\veps)} \int_{\bbR^d \setminus B({\bf 0},\veps')} \frac{\varrho (\bh)}{|\bh|} \frac{\varrho (\bw)}{|\bw|} \Bigg[ \\
    & \qquad \left( (\ub(\bx+\bh+\bw)-\ub(\bx+\bh)) \otimes \frac{\bw}{|\bw|} \right) \frac{\bh}{|\bh|}
        +
        \left( (\ub(\bx+\bw)-\ub(\bx)) \otimes \frac{\bw}{|\bw|} \right) \frac{\bh}{|\bh|}  \Bigg] \, \rmd \bw \, \rmd \bh \\
    &= \int_{\bbR^d \setminus B({\bf 0},\veps)}  \frac{\varrho (\bh)}{|\bh|} \Bigg[
        \left( \int_{\bbR^d \setminus B({\bf 0},\veps')} \frac{\varrho (\bw)}{|\bw|} (\ub(\bx+\bh+\bw)-\ub(\bx+\bh)) \otimes \frac{\bw}{|\bw|} \, \rmd \bw \right) \frac{\bh}{|\bh|} \\
            &\qquad -
        \left( \int_{\bbR^d \setminus B({\bf 0},\veps')} \frac{\varrho (\bw)}{|\bw|}  (\ub(\bx+\bw)-\ub(\bx)) \otimes \frac{\bw}{|\bw|} \, \rmd \bw \right) \frac{\bh}{|\bh|}  \Bigg] \, \rmd \bh \\
    &= \int_{\bbR^d \setminus B({\bf 0},\veps)}  \frac{\varrho (\bh)}{|\bh|} \Bigg[
        \cG_{\varrho,\veps'} \bu(\bx+\bh) \frac{\bh}{|\bh|} -
        \cG_{\varrho,\veps'} \bu(\bx) \frac{\bh}{|\bh|}  \Bigg] \, \rmd \bh \\
        &= \cD_{\varrho,\veps} \circ \cG_{\varrho,\veps'} \bu(\bx)\,.
    \end{split}
\end{equation}}
Using linearity of the inner integral is again justified since the double integrand of $II$ is majorized by the function in \eqref{eq:CurlOfCurl:Majorizer}.

Last, the double integrand of $I$ is also majorized by the function in \eqref{eq:CurlOfCurl:Majorizer}. Therefore using Fubini's theorem and linearity of the integral 
{\small \begin{equation}\label{eq:CurlOfCurl:Pf2}
    \begin{split}
        I &=  \int_{\bbR^d \setminus B({\bf 0},\veps)} \int_{\bbR^d \setminus B({\bf 0},\veps')} \frac{\varrho (\bh)}{|\bh|} \frac{\varrho (\bw)}{|\bw|} 
        \Bigg[
            \frac{\bh}{|\bh|} \cdot (\ub(\bx+\bh+\bw)-\ub(\bx+\bh)-\bu(\bx+\bw) + \bu(\bx) )
        \Bigg] 
\frac{\bw}{|\bw|} \, \rmd \bw \, \rmd \bh \\
    &= \int_{\bbR^d \setminus B({\bf 0},\veps)} \int_{\bbR^d \setminus B({\bf 0},\veps')} \frac{\varrho (\bh)}{|\bh|} \frac{\varrho (\bw)}{|\bw|} 
        \Bigg[
            \left( \frac{\bh}{|\bh|} \cdot (\ub(\bx+\bh+\bw)-\ub(\bx+\bw)) \right) \\
            &\qquad \qquad \qquad - 
            \left( \frac{\bh}{|\bh|} \cdot (\bu(\bx+\bh) - \bu(\bx) ) \right)
        \Bigg] 
    \frac{\bw}{|\bw|} \, \rmd \bw \, \rmd \bh \\
    &= \int_{\bbR^d \setminus B({\bf 0},\veps')} \int_{\bbR^d \setminus B({\bf 0},\veps)} \frac{\varrho (\bw)}{|\bw|} 
        \Bigg[
            \frac{\varrho (\bh)}{|\bh|} \left( \frac{\bh}{|\bh|} \cdot (\ub(\bx+\bh+\bw)-\ub(\bx+\bw)) \right) \\
            &\qquad \qquad \qquad - 
            \frac{\varrho (\bh)}{|\bh|} \left( \frac{\bh}{|\bh|} \cdot (\bu(\bx+\bh) - \bu(\bx) ) \right)
        \Bigg] 
    \frac{\bw}{|\bw|} \, \rmd \bh \, \rmd \bw \\
    &= \int_{\bbR^d \setminus B({\bf 0},\veps')} \frac{\varrho (\bw)}{|\bw|} 
        \Bigg[
            \int_{\bbR^d \setminus B({\bf 0},\veps)}  \frac{\varrho (\bh)}{|\bh|} \left( \frac{\bh}{|\bh|} \cdot (\ub(\bx+\bh+\bw)-\ub(\bx+\bw)) \right) \, \rmd \bh \\
            &\qquad \qquad \qquad - \int_{\bbR^d \setminus B({\bf 0},\veps)} 
            \frac{\varrho (\bh)}{|\bh|} \left( \frac{\bh}{|\bh|} \cdot (\bu(\bx+\bh) - \bu(\bx) ) \right) \, \rmd \bh
        \Bigg] 
    \frac{\bw}{|\bw|} \, \rmd \bw \\
    &= \int_{\bbR^d \setminus B({\bf 0},\veps')} \frac{\varrho (\bw)}{|\bw|} 
        \Big[
            \cD_{\varrho,\veps} \bu(\bx+\bw) - \cD_{\varrho,\veps} \bu(\bx)
        \Big] 
    \frac{\bw}{|\bw|} \, \rmd \bw \\
    &= \cG_{\varrho,\veps'} \circ \cD_{\varrho,\veps} \bu(\bx)\,.
    \end{split}
\end{equation}}
Putting together \eqref{eq:CurlOfCurl:Pf1} and \eqref{eq:CurlOfCurl:Pf2} gives us the theorem.
\end{proof}
\begin{proof}[Proof of Proposition \ref{prop:CurlOfCurl:SmoothFxns}]
Follows from Theorem \ref{thm:OperatorsWellDefdForSmoothFxns} by passing to the limit as $\veps$, $\veps' \to 0$ in \eqref{eq:CurlCurlId:Truncated}.
\end{proof}


%

\section{Equivalence Kernel}\label{sec:eq-kernel}
In this section we rigorously show that for bounded $C^2$ functions, there exists an equivalence kernel for which the composition of the divergence and gradient operators corresponds to the (unweighted) nonlocal Laplace operator, i.e.  $-\cD_{\varrho} \circ \cG_{\varrho}=(-\Delta)_\varrho$. Furthermore, we use the kernel examples described in Section \ref{sec:OperatorDef} to illustrate our equivalence result. 
\begin{theorem}\label{thm:EquivalenceKernel}
Let $d$ and $N$ be positive integers. Suppose $\varrho$ is a radial kernel that satisfies \eqref{assumption:Kernel}, and suppose that
\begin{equation}\label{eq:KernelFullIntegrability}
    \frac{\varrho(|\bseta|)}{|\bseta|} \in L^1(\bbR^d)\,. \tag{K-INT}
\end{equation}
Then for functions $\bu \in C^2_b(\bbR^d;\bbR^N)$ the formula
\begin{equation}\label{eq:Diffusion}
    -\cD_{\varrho} \circ \cG_{\varrho} \bu(\bx) = \frac{1}{2}\intdm{\bbR^d}{ \varrho_{\text{eq}}(|\by|) \frac{(2\bu(\bx)-\bu(\bx+\by)-\bu(\bx-\by))}{|\by|^2} }{\by}
\end{equation}
holds, where the measurable function $\varrho_{\text{eq}}$ is defined as
\begin{equation}\label{eq:EquivalenceKernel}
    \varrho_{\text{eq}}(|\bseta|) := |\bseta|^{d} \intdm{\bbR^d}{ \frac{\varrho(|\bseta| |\bz|)}{|\bz|} \frac{\varrho(|\bseta| |\be_1 -\bz|)}{|\be_1-\bz|} { \frac{\be_1-\bz}{|\be_1 - \bz|} \cdot \frac{\bz}{|\bz|} } }{\bz}\,, \quad |\bseta| > 0\,.
\end{equation}
\end{theorem}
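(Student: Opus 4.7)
The plan is to use Theorem \ref{thm:OperatorsWellDefdForSmoothFxns} to pass to the truncated operators $\cD_{\varrho,\veps} \circ \cG_{\varrho,\veps'}$, expand the truncated composition into a double integral, and then apply Fubini's theorem plus repeated changes of variables to expose the equivalence kernel \eqref{eq:EquivalenceKernel}. The crucial hypothesis \eqref{eq:KernelFullIntegrability} will enter precisely where it provides enough integrability to split and recombine iterated integrals.

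Since $\bu \in C^2_b(\bbR^d;\bbR^N)$, Theorem \ref{thm:OperatorsWellDefdForSmoothFxns} gives
$\cD_\varrho \circ \cG_\varrho \bu(\bx) = \lim_{\veps,\veps' \to 0^+} \cD_{\varrho,\veps} \circ \cG_{\varrho,\veps'}\bu(\bx)$.
Unpacking the truncated composition exactly as in the proof of Theorem \ref{thm:CurlOfCurl:Truncated} --- using the tensor identity $(\ba \otimes \bfb)\bc = (\bfb \cdot \bc)\ba$ --- produces the double integral of $K(\bh,\bw)\, \Delta^2 \bu(\bx;\bh,\bw)$ over $\{|\bh|>\veps\}\times\{|\bw|>\veps'\}$, where $K(\bh,\bw) := \frac{\varrho(|\bh|)\varrho(|\bw|)}{|\bh||\bw|}\frac{\bh \cdot \bw}{|\bh||\bw|}$ and $\Delta^2 \bu(\bx;\bh,\bw) := \bu(\bx+\bh+\bw) - \bu(\bx+\bh) - \bu(\bx+\bw) + \bu(\bx)$. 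Under \eqref{eq:KernelFullIntegrability}, each of the four pieces obtained by expanding $\Delta^2 \bu$ is majorized by $\Vnorm{\bu}_{L^\infty}\,\frac{\varrho(|\bh|)}{|\bh|}\frac{\varrho(|\bw|)}{|\bw|}$, which lies in $L^1(\bbR^d \times \bbR^d)$. Thus Fubini applies to each piece separately, and because $\varrho$ is radial, the three pieces involving $\bu(\bx+\bh)$, $\bu(\bx+\bw)$, and $\bu(\bx)$ vanish: in each case the iterated integral factors contain at least one integral of the form $\int_{|\ba|>\eta} \frac{\varrho(|\ba|)}{|\ba|^2}\ba\, d\ba = 0$. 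Dominated convergence then yields $\cD_\varrho \circ \cG_\varrho \bu(\bx) = \iint_{\bbR^d\times\bbR^d} K(\bh,\bw)\,\bu(\bx+\bh+\bw)\,d\bh\,d\bw$.

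Changing variables $\bw \to \by := \bh+\bw$ (with $\bh$ held fixed) and applying Fubini gives $\int_{\bbR^d} \bu(\bx+\by)\, A(\by)\,d\by$, where $A(\by) := \int_{\bbR^d} K(\bh, \by-\bh)\,d\bh$. For $\by \neq \mathbf{0}$ we rescale $\bh = |\by|\bz$ and apply a rotation sending $\by/|\by|$ to $\be_1$, which is legitimate since $\varrho$ is radial; this produces directly $A(\by) = \varrho_{\text{eq}}(|\by|)/|\by|^2$ from the definition \eqref{eq:EquivalenceKernel}. Now the same Fubini-with-odd-symmetry argument shows $\int A(\by)\,d\by = \iint K(\bh,\bw)\,d\bh\,d\bw = 0$, and $A$ is even in $\by$. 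Hence $\int \bu(\bx+\by) A(\by)\,d\by = \int [\bu(\bx+\by) - \bu(\bx)] A(\by)\,d\by = -\tfrac{1}{2}\int [2\bu(\bx) - \bu(\bx+\by) - \bu(\bx-\by)] A(\by)\,d\by$, and negating both sides produces \eqref{eq:Diffusion}.

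The main obstacle will be rigorously justifying each Fubini step under only \eqref{eq:KernelFullIntegrability}, and verifying the rotation reduction used to identify $A(\by)$ with $\varrho_{\text{eq}}(|\by|)/|\by|^2$: the integrand defining $\varrho_{\text{eq}}$ in \eqref{eq:EquivalenceKernel} can be singular at $\bz = \mathbf{0}$ and $\bz = \be_1$, so one should retain the truncations $\veps, \veps' > 0$ throughout the rescaling step and pass to the limit only at the end, using dominated convergence with the $L^1$ majorant afforded by \eqref{eq:KernelFullIntegrability}.
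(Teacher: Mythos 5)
Your proposal is correct and follows essentially the same route as the paper: unpack the composition into the double integral of $\frac{\varrho(|\bh|)}{|\bh|}\frac{\varrho(|\bw|)}{|\bw|}\big(\frac{\bh}{|\bh|}\cdot\frac{\bw}{|\bw|}\big)$ against the second difference $\bu(\bx+\bh+\bw)-\bu(\bx+\bh)-\bu(\bx+\bw)+\bu(\bx)$, use the $L^1(\bbR^d\times\bbR^d)$ majorant furnished by \eqref{eq:KernelFullIntegrability} to justify Fubini, discard the three odd terms, pass to $\by=\bh+\bw$, and rescale-and-rotate to exhibit $\varrho_{\text{eq}}$. The only differences are cosmetic: you take an unnecessary detour through the truncated operators and Theorem~\ref{thm:OperatorsWellDefdForSmoothFxns} (under \eqref{eq:KernelFullIntegrability} the composition is already absolutely convergent, which is exactly how the paper proceeds), and you perform the final symmetrization after introducing $A(\by)$ (using $\int A = 0$ and $A$ even) rather than symmetrizing the $(\bh,\bw)$ double integral directly, but these are equivalent rearrangements of the same algebra.
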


Henceforth we define the operator appearing in \eqref{eq:Diffusion} as
\begin{equation}\label{eq:DiffusionDefinition}
    (-\Delta)_{\varrho}\bu(\bx) := \frac{1}{2}\intdm{\bbR^d}{ \varrho_{\text{eq}}(|\by|) \frac{(2\bu(\bx)-\bu(\bx+\by)-\bu(\bx-\by))}{|\by|^2} }{\by}\,.
\end{equation}

\begin{proof}[Proof of Theorem \ref{thm:EquivalenceKernel}]
Unpacking the operator $\cD_{\varrho} \circ \cG_{\varrho} \bu$ and changing coordinates,
\begin{equation*}
    \begin{split}
        &\cD_{\varrho} \circ \cG_{\varrho} \bu(\bx) \\
            &= \intdm{\bbR^d }{\varrho(|\bh|) \frac{\cG_{\varrho} \bu(\bx+\bh)-\cG_{\varrho} \bu(\xb)}{|\bh|} \frac{\bh}{|\bh|}  }{\bh} \\
            &= \int_{\bbR^d}  \frac{\varrho(|\bh|)}{|\bh|} \Bigg(\int_{\bbR^d} \frac{\varrho(|\bw|)}{|\bw|}  \Big( \bu(\bx+\bh+\bw) - \bu(\bx+\bh) \Big) \otimes \frac{\bw}{|\bw|} \, \rmd \bw \\
                &\qquad -\int_{\bbR^d} \frac{\varrho(|\bw|)}{|\bw|} \Big(\bu(\bx+\bw) - \bu(\bx) \Big) \otimes \frac{\bw}{|\bw|} \, \rmd \bw \Bigg) \frac{\bh}{|\bh|} \, \rmd \bh \\
            &= \int_{\bbR^d}  \frac{\varrho(|\bh|)}{|\bh|} \Bigg( 
                 \int_{\bbR^d} \frac{\varrho(|\bw|)}{|\bw|}  \big(\bu(\bx+\bh+\bw) - \bu(\bx+\bh) -\bu(\bx+\bw) + \bu(\bx) \big) \otimes \frac{\bw}{|\bw|} \, \rmd \bw \Bigg) \frac{\bh}{|\bh|} \, \rmd \bh\,.
    \end{split}
\end{equation*}
We are justified in using linearity of the integral in the last equality, since by \eqref{eq:KernelFullIntegrability} $\bw \mapsto \varrho(\bw) \frac{|\ub(\bx+\bw)-\ub(\bx)|}{|\bw|} \in L^1(\bbR^d)$ for any $\bu \in L^{\infty}(\bbR^d)$, for any $\veps > 0$ and for any $\bx \in \bbR^d$. Using the vector identity $(\ba \otimes \bfb) \bc = (\bfb \cdot \bc) \ba$ brings us to
\begin{equation*}
    \cD_{\varrho} \circ \cG_{\varrho} \bu(\bx) = \int_{\bbR^d} \int_{\bbR^d} \frac{\varrho(|\bh|)}{|\bh|}  \frac{\varrho(|\bw|)}{|\bw|} \Bigg( \frac{\bw}{|\bw|} \cdot \frac{\bh}{|\bh|} \Bigg) \big(\bu(\bx+\bh+\bw) - \bu(\bx+\bh) - \bu(\bx+\bw) + \bu(\bx) \big)  \, \rmd \bw \, \rmd \bh\,.
\end{equation*}
The expression in the double integral is majorized by
\begin{equation}\label{eq:DivOfGrad:Majorizer}
    C \Vnorm{\bu}_{L^{\infty}(\bbR^d)} \frac{\varrho (|\bh|)}{|\bh|} \frac{\varrho (|\bw|)}{|\bw|}\,,
\end{equation}
which belongs to $L^1( \bbR^d \times \bbR^d)$ by Tonelli's theorem.
Therefore, Fubini's theorem is justified in the following splitting of the integrand:
\begin{equation*}
\begin{split}
    \cD_{\varrho} \circ \cG_{\varrho} \bu(\bx) &= \int_{\bbR^d} \int_{\bbR^d} \frac{\varrho(|\bh|)}{|\bh|}  \frac{\varrho(|\bw|)}{|\bw|} \Bigg( \frac{\bw}{|\bw|} \cdot \frac{\bh}{|\bh|} \Bigg) \bu(\bx+\bh+\bw) \, \rmd \bw \, \rmd \bh \\
        &\qquad - \int_{\bbR^d} \frac{\varrho(|\bh|)}{|\bh|} \left( \left[ \int_{\bbR^d}   \frac{\varrho(|\bw|)}{|\bw|} \frac{\bw}{|\bw|} \, \rmd \bw \right] \cdot \frac{\bh}{|\bh|} \right) \bu(\bx+\bh) \, \rmd \bh \\
        &\qquad - \int_{\bbR^d} \frac{\varrho(|\bw|)}{|\bw|} \left( \left[ \int_{\bbR^d}   \frac{\varrho(|\bh|)}{|\bh|} \frac{\bh}{|\bh|} \, \rmd \bh \right] \cdot \frac{\bw}{|\bw|} \right) \bu(\bx+\bw) \, \rmd \bw \\
        &\qquad + \int_{\bbR^d} \int_{\bbR^d} \frac{\varrho(|\bh|)}{|\bh|}  \frac{\varrho(|\bw|)}{|\bw|} \Bigg( \frac{\bw}{|\bw|} \cdot \frac{\bh}{|\bh|} \Bigg) \, \rmd \bw \, \rmd \bh \, \bu(\bx) \,.
\end{split}
\end{equation*}
The inner integrals on the second and third lines are both zero, since the respective integrands are odd. The last line is zero for the same reason. Therefore, we can subtract any multiple of the last line from $\cD_{\varrho} \circ \cG_{\varrho} \bu(\bx)$.
Combining this fact, along with splitting the integral and changing coordinates, gives
\begin{equation*}
\begin{split}
    \cD_{\varrho} \circ \cG_{\varrho} \bu(\bx) &= \frac{1}{2} \int_{\bbR^d} \int_{\bbR^d} \frac{\varrho(|\bh|)}{|\bh|}  \frac{\varrho(|\bw|)}{|\bw|} \Bigg( \frac{\bw}{|\bw|} \cdot \frac{\bh}{|\bh|} \Bigg) \bu(\bx+\bh+\bw) \, \rmd \bw \, \rmd \bh \\
        &\qquad +\frac{1}{2} \int_{\bbR^d} \int_{\bbR^d} \frac{\varrho(|\bh|)}{|\bh|}  \frac{\varrho(|\bw|)}{|\bw|} \Bigg( \frac{\bw}{|\bw|} \cdot \frac{\bh}{|\bh|} \Bigg) \bu(\bx+\bh+\bw) \, \rmd \bw \, \rmd \bh \\
    &= \frac{1}{2} \int_{\bbR^d} \int_{\bbR^d} \frac{\varrho(|\bh|)}{|\bh|}  \frac{\varrho(|\bw|)}{|\bw|} \Bigg( \frac{\bw}{|\bw|} \cdot \frac{\bh}{|\bh|} \Bigg) \bu(\bx+\bh+\bw) \, \rmd \bw \, \rmd \bh \\
        &\qquad +\frac{1}{2} \int_{\bbR^d} \int_{\bbR^d} \frac{\varrho(|\bh|)}{|\bh|}  \frac{\varrho(|\bw|)}{|\bw|} \Bigg( \frac{\bw}{|\bw|} \cdot \frac{\bh}{|\bh|} \Bigg) \bu(\bx-\bh-\bw) \, \rmd \bw \, \rmd \bh \\
        &= \frac{1}{2} \int_{\bbR^d} \int_{\bbR^d} \frac{\varrho(|\bh|)}{|\bh|}  \frac{\varrho(|\bw|)}{|\bw|} \Bigg( \frac{\bw}{|\bw|} \cdot \frac{\bh}{|\bh|} \Bigg) \\
        &\qquad \qquad \cdot \big( \bu(\bx+\bh+\bw) + \bu(\bx-\bh-\bw) - 2 \bu(\bx) \big)\, \rmd \bw \, \rmd \bh\,.
    \end{split}
\end{equation*}
Now, we iterate the integrals and introduce the coordinate change $\by = \bw+\bh$:
\begin{equation*}
\begin{split}
    \cD_{\varrho} \circ \cG_{\varrho} \bu(\bx) &= \frac{1}{2} \int_{\bbR^d} \int_{\bbR^d} \frac{\varrho(|\bh|)}{|\bh|}  \frac{\varrho(|\by-\bh|)}{|\by-\bh|} { \frac{\by-\bh}{|\by-\bh|} \cdot \frac{\bh}{|\bh|} } \big( \bu(\bx+\by) + \bu(\bx-\by) - 2 \bu(\bx) \big) \, \rmd \by \, \rmd \bh\,.
    \end{split}
\end{equation*}
We can interchange the order of integration, since the integrand remains majorized by \eqref{eq:DivOfGrad:Majorizer}. So we have
\begin{equation*}
    -\cD_{\varrho} \circ \cG_{\varrho} \bu(\bx) = \frac{1}{2} \int_{\bbR^d} \varrho_{eq}(\by) \frac{ \big( 2 \bu(\bx) - \bu(\bx+\by) - \bu(\bx-\by)  \big) }{|\by|^2} \, \rmd \by \,,
\end{equation*}
where
\begin{equation*}
    \varrho_{\text{eq}}(\by) = |\by|^2 \intdm{\bbR^d}{ \frac{\varrho(|\bh|)}{|\bh|} \frac{\varrho(|\by-\bh|)}{|\by-\bh|} { \frac{\by-\bh}{|\by-\bh|} \cdot \frac{\bh}{|\bh|} } }{\bh}\,.
\end{equation*}
In order to conclude with the formula \eqref{eq:EquivalenceKernel} we will show that $\varrho_{\text{eq}}$ actually only depends on $|\by|$. For any $\by \neq {\bf 0}$, let $\bw = \frac{\bh}{|\by|}$ and change coordinates:
\begin{equation*}
    \varrho_{\text{eq}}(\by) := |\by|^{d} \intdm{\bbR^d}{ \frac{\varrho(|\by| |\bw|)}{|\bw|} \frac{\varrho(|\by| |\frac{\by}{|\by|} -\bw|)}{|\frac{\by}{|\by|}-\bw|} { \frac{\frac{\by}{|\by|}-\bw}{|\frac{\by}{|\by|}-\bw|} \cdot \frac{\bw}{|\bw|} } }{\bw}\,.
\end{equation*}
Let $\bR(\by)$ be the rotation such that $\bR \frac{\by}{|\by|} =\be_1$, where $\be_1 = (1,0,\ldots, 0)$. Then letting $\bz = \bR \bw$ and changing coordinates gives
\begin{equation*}
    \varrho_{\text{eq}}(\by) = |\by|^{d} \intdm{\bbR^d}{ \frac{\varrho(|\by| |\bz|)}{|\bz|} \frac{\varrho(|\by| |\be_1 -\bz|)}{|\be_1-\bz|} { \frac{\be_1-\bz}{|\be_1 - \bz|} \cdot \frac{\bz}{|\bz|} } }{\bz}\,.
\end{equation*}
\end{proof}

The previous theorem relies heavily on the assumption \eqref{eq:KernelFullIntegrability}, which does not hold for singular kernels such as $\varrho_s$. Nevertheless, a pointwise equivalence kernel can be defined, as we show in the next lemma.

\begin{lemma}\label{lma:EquivalenceKernel:Singular}
    Suppose that a radial kernel $\varrho$ satisfies \eqref{assumption:Kernel}. Assume that $\varrho$ satisfies the following conditions.
    Define the function
\begin{equation*}
    \Psi(r) := \int_r^{\infty} \frac{\varrho(\theta)}{\theta} \, \rmd \theta < \infty\,, \qquad r > 0\,.
\end{equation*}
Note that $\Psi: (0,\infty) \to (0,\infty)$ is well-defined by assumption.
Suppose that $\Psi$ satisfies
\begin{equation}\label{assumption:EquivalenceKernel}
    r^{d-1} \Psi(r) \in L^1_{\text{loc}}([0,\infty))\,, \qquad \Psi \in C^2((0,\infty))\,. \tag{K-EQ}
\end{equation}
    Then a pointwise equivalence kernel can be defined in the following way:
    \begin{equation*}
    \varrho_{\text{eq}}(|\bseta|) := \lim\limits_{\veps,\veps' \to 0} \varrho_{\text{eq},\veps,\veps'}(|\bseta|)\,,
\end{equation*}
for any $|\bseta| > 0\,,$ 
where the measurable function $\varrho_{\text{eq},\veps,\veps'}$ is defined for $\veps > 0$ and $\veps' > 0$ as
\begin{equation}\label{eq:pvEquivalenceKernel}
    \varrho_{\text{eq},\veps,\veps'}(|\bseta|) := |\bseta|^{d} \intdm{\bbR^d}{ \chi_{\{|\be_1-\bz| > \veps' \}} \chi_{\{|\bz| > \veps \}} \frac{\varrho(|\bseta| |\bz|)}{|\bz|} \frac{\varrho(|\bseta| |\be_1 -\bz|)}{|\be_1-\bz|} { \frac{\be_1-\bz}{|\be_1 - \bz|} \cdot \frac{\bz}{|\bz|} } }{\bz}.
\end{equation}
\end{lemma}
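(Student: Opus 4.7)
The plan is to rewrite the integrand of \eqref{eq:pvEquivalenceKernel} in divergence form using the primitive $\Psi$, and then to isolate the principal-value cancellation at each of the two singularities $\bz = {\bf 0}$ and $\bz = \be_1$ by exploiting radial symmetry. The starting identity $\Psi'(r) = -\varrho(r)/r$ yields, via the chain rule,
\begin{equation*}
\nabla_\bz \Psi(|\bseta||\bz|) = -\frac{\varrho(|\bseta||\bz|)}{|\bz|}\frac{\bz}{|\bz|}, \qquad \nabla_\bz \Psi(|\bseta||\be_1 - \bz|) = \frac{\varrho(|\bseta||\be_1 - \bz|)}{|\be_1 - \bz|}\frac{\be_1 - \bz}{|\be_1 - \bz|}.
\end{equation*}
Setting $F(\bz) := \Psi(|\bseta||\bz|)$ and $G(\bz) := \Psi(|\bseta||\be_1 - \bz|)$, the integrand of $\varrho_{\text{eq},\veps,\veps'}(|\bseta|)$ becomes $-\nabla F(\bz) \cdot \nabla G(\bz)$. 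The hypothesis $\Psi \in C^2((0,\infty))$ in \eqref{assumption:EquivalenceKernel} ensures that $F \in C^2(\bbR^d \setminus \{{\bf 0}\})$ and $G \in C^2(\bbR^d \setminus \{\be_1\})$, so the only singularities of the integrand lie at those two points.

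Next I would fix a radius $r_0 \in (0,1/2)$ and decompose the domain of integration into three disjoint pieces: $B({\bf 0},r_0)$, $B(\be_1,r_0)$, and the exterior $\Omega_0 := \bbR^d \setminus (B({\bf 0},r_0) \cup B(\be_1,r_0))$. On $\Omega_0$ the integrand is continuous and (assuming absolute integrability at infinity, which I address below) contributes a finite amount independent of $\veps, \veps' < r_0$. The pieces that depend on $\veps$ and $\veps'$ arise only from the two small balls; by symmetry it suffices to analyze $B({\bf 0}, r_0)$. Since $G$ is $C^2$ there (as $|\be_1 - \bz|$ stays bounded away from $0$), Taylor's theorem gives $\nabla G(\bz) = \nabla G({\bf 0}) + R(\bz)$ with $|R(\bz)| \le C|\bz|$. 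The leading contribution
\begin{equation*}
-\nabla G({\bf 0}) \cdot \int_{\{\veps \leq |\bz| \leq r_0\}} \nabla F(\bz)\, \rmd \bz
\end{equation*}
vanishes identically for every $\veps > 0$, because $\nabla F(\bz) = -\varrho(|\bseta||\bz|)|\bz|^{-2}\bz$ is an odd function of $\bz$ and the annulus is symmetric about the origin. The remainder $-\nabla F(\bz) \cdot R(\bz)$ is bounded pointwise by $C\varrho(|\bseta||\bz|)$, which lies in $L^1(B({\bf 0},r_0))$ by \eqref{assumption:Kernel}, so dominated convergence delivers a finite limit as $\veps \to 0$. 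An identical argument at $\bz = \be_1$ handles the $\veps' \to 0$ limit, and since the two truncations act in disjoint balls, the simultaneous limit $(\veps, \veps') \to (0, 0)$ exists and defines $\varrho_{\text{eq}}(|\bseta|)$.

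The main obstacle I anticipate is verifying absolute integrability of $-\nabla F \cdot \nabla G$ on the exterior $\Omega_0$, especially at infinity, purely from \eqref{assumption:Kernel} and \eqref{assumption:EquivalenceKernel}. Both $|\nabla F(\bz)|$ and $|\nabla G(\bz)|$ decay at infinity at the rate $\varrho(|\bseta||\bz|)/|\bz|$, so their product against the Jacobian $|\bz|^{d-1}$ must be integrable. I expect this follows from the bound $|\be_1 - \bz| \ge |\bz|/2$ for $|\bz| \ge 2$ combined with the monotonicity of $\varrho(r)/r$ in \eqref{assumption:Kernel}, which yields $|\nabla F(\bz) \cdot \nabla G(\bz)| \le C(\varrho(|\bseta||\bz|)/|\bz|)^2$; the remaining radial integral $\int_R^\infty (\varrho(|\bseta|r)/r)^2 r^{d-1}\, \rmd r$ can then be controlled by an integration by parts against the primitive $\Psi$ together with its finiteness at every positive $r$. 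The remaining tasks---computing $\nabla G({\bf 0})$, bounding $\sup_{B({\bf 0},r_0)} |D^2 G|$ using \eqref{assumption:EquivalenceKernel}, and justifying the disjoint decomposition of the truncations---are routine.
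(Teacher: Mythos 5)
Your proof is correct and follows a genuinely different route from the paper's. Both arguments rest on the same underlying cancellation: near the origin, $\nabla F(\bz) = -\varrho(|\bseta||\bz|)|\bz|^{-2}\bz$ is odd while $\nabla G$ is smooth, so the singular leading term vanishes upon symmetric integration. But the mechanism you use to extract this is different. The paper writes $\wt{\varrho}_{\text{eq},\veps,1} = \int_{A_{\veps,1}} \nabla_\bz\Psi(\delta|\be_1-\bz|)\cdot\nabla_\bz\Psi(\delta|\bz|)\,\rmd\bz$ and then applies Green's identity, converting to $\int \Delta_\bz\Psi(\delta|\be_1-\bz|)\,\Psi(\delta|\bz|)\,\rmd\bz$ plus boundary terms at $|\bz|=\veps$ and $|\bz|=1/2$; the $\veps$-boundary term is then shown to be $O(\veps^d\Psi(\delta\veps))$ by a mean-value argument on the sphere, and a last step shows $\veps^d\Psi(\delta\veps)\to0$. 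You instead Taylor expand $\nabla G(\bz) = \nabla G(\bf 0) + R(\bz)$, kill the $\nabla G({\bf 0})$ part by odd symmetry, and dominate $|\nabla F\cdot R|\leq C\varrho(|\bseta||\bz|)\in L^1$ directly by (K). Your route is more elementary, sidesteps the integration-by-parts and the $\veps^d\Psi(\delta\veps)\to0$ lemma entirely, and in fact does not need the hypothesis $r^{d-1}\Psi(r)\in L^1_{\mathrm{loc}}$ (the paper uses that hypothesis to dominate the bulk term $\Delta G\cdot F$); in exchange, the paper's formulation gives an explicit representation of the limit useful downstream. Two small points you should firm up. First, your symmetry claim reducing $B(\be_1,r_0)$ to $B({\bf 0},r_0)$ is correct and is precisely the change of variables $\by = \be_1-\bz$ the paper performs. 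Second, on the exterior $\Omega_0$ (which the paper dismisses as "clearly absolutely convergent"), your sketch is on the right track but the ``integration by parts against $\Psi$'' phrasing is vaguer than needed: monotonicity of $g(r):=\varrho(r)/r$ plus $\int_1^\infty g(r)r^{d-1}\,\rmd r<\infty$ from (K) force $g(r)\leq Cr^{-d}$ for $r\geq1$, whence $g(r)^2 r^{d-1}\leq Cg(r)r^{-1}\leq Cg(r)r^{d-1}$ is integrable at infinity, and $|\be_1-\bz|\geq|\bz|/2$ for $|\bz|\geq2$ converts $|\nabla F\cdot\nabla G|$ into this form.
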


\begin{proof}
To begin, we split the integral. For any $\veps > 0$, define the sets
\begin{equation*}
    A_{\veps,1} := \{ \bz \, : \, \veps \leq |\bz| \leq \frac{1}{2} \}\,, \quad A_{\veps,2} := \{ \bz \, : \, \veps \leq |\be_1-\bz| \leq \frac{1}{2} \}\,, \quad A_{1} := \{ \bz \, : \, \frac{1}{2} \leq |\bz| \text{ and } \frac{1}{2} \leq |\be_1-\bz| \}\,.
\end{equation*}
Then
\begin{equation*}
    \varrho_{\text{eq},\veps,\veps'}(|\bseta|) = \int_{A_{\veps,1}} \cdots +  \int_{A_{\veps',2}} \cdots +  \int_{A_{1}} \cdots
\end{equation*}
Clearly the third integral is an absolutely convergent integral. 
Letting $\by = \be_1 - \bz$, a change of coordinates gives
\begin{equation*}
    \begin{split}
    &\intdm{A_{\veps',2}}{ \frac{\varrho(|\bseta| \, |\be_1-\bz|)}{|\be_1-\bz|} \frac{\varrho(|\bseta| \, |\bz|)}{|\bz|} { \frac{\be_1-\bz}{|\be_1-\bz|} \cdot \frac{\bz}{|\bz|} }  }{\bz} \\
        &\quad =  \intdm{\bbR^d}{ \chi_{ \{ \frac{1}{2} \geq |\be_1-\bz| \geq \veps' \} } \cdot  \frac{\varrho(|\bseta| \, |\be_1-\bz|)}{|\be_1-\bz|} \frac{\varrho(|\bseta| \, |\bz|)}{|\bz|} { \frac{\be_1-\bz}{|\be_1-\bz|} \cdot \frac{\bz}{|\bz|} } 
        }{\bz} \\
        &\quad =  \intdm{\bbR^d}{ \chi_{ \{ \frac{1}{2} \geq |\by| \geq \veps' \} } \cdot  \frac{\varrho(|\bseta| \, |\by|)}{|\by|} \frac{\varrho(|\bseta| \, |\be_1-\by|)}{|\be_1-\by|} { \frac{\by}{|\by|} \cdot \frac{\be_1-\by}{|\be_1-\by|} } 
        }{\by} \\
        &\quad = \intdm{A_{\veps',1}}{ \frac{\varrho(|\bseta| \, |\be_1-\bz|)}{|\be_1-\bz|} \frac{\varrho(|\bseta| \, |\bz|)}{|\bz|} { \frac{\be_1-\bz}{|\be_1-\bz|} \cdot \frac{\bz}{|\bz|} }   }{\bz}\,.
    \end{split}
\end{equation*}
Thus it suffices to show that the quantity
\begin{align}\label{eq:EquivalenceKernel:Proof1}
\begin{split}
    \sup_{\veps > 0} \wt{\varrho}_{\text{eq},\veps,1}(|\bseta|) &:= \intdm{\bbR^d}{ \chi_{ \{ \frac{1}{2} \geq |\bz| \geq \veps \} } \cdot  \frac{\varrho(|\bseta| \, |\be_1-\bz|)}{|\be_1-\bz|} \frac{\varrho(|\bseta| \, |\bz|)}{|\bz|} { \frac{\be_1-\bz}{|\be_1-\bz|} \cdot \frac{\bz}{|\bz|} }    }{\bz} \\ &< \infty
\end{split}
\end{align}
for any fixed $|\bseta| > 0$. We assume $\veps < 1/4$ from here on.

Note for any fixed $\delta > 0$ and for $\ba \in \{ {\bf 0}, \be_1 \}$
\begin{equation*}
    \grad_{\bz} \Psi( \delta |\ba-\bz| ) = \frac{\varrho(\delta |\ba-\bz|)}{\delta |\ba-\bz|} \cdot \delta \frac{\ba-\bz}{|\ba-\bz|}\,;
\end{equation*}
Thus
\begin{equation*}
    \begin{split}
        \wt{\varrho}_{\text{eq},\veps,1}(\delta) &= \intdm{\{ \frac{1}{2} \geq |\bz| \geq \veps \}}{ { \grad_{\bz} \Psi( \delta |\be_1-\bz| ) \,\cdot\, \grad_{\bz} \Psi( \delta |\bz| ) }  }{\bz} \\
        &= \intdm{\{ \frac{1}{2} \geq |\bz| \geq \veps \}}{  \Delta_{\bz} \Psi( \delta |\be_1-\bz| )  \,  \Psi( \delta |\bz| ) }{\bz} 
        + \intdm{ \{|\bz| = \veps\} }{ \grad_{\bz} \Psi(\delta|\be_1-\bz|) \cdot \frac{\bz}{|\bz|} \, \Psi(\delta |\bz|) }{\sigma(\bz)} \\
            &\quad + \intdm{ \{|\bz| = 1/2\} }{ \grad_{\bz} \Psi(\delta|\be_1-\bz|) \cdot \frac{\bz}{|\bz|} \, \Psi(\delta |\bz|) }{\sigma(\bz)}\,.
    \end{split}
\end{equation*}
Note that $\Psi \in C^2((0,\infty))$ and the argument $\delta |\be_1 - \bz|$ lives in a bounded set far away from $0$. Note also that $\Psi(\delta |\bz|) \in L^1_{\text{loc}}(\bbR^d)$ by assumption.
Therefore the first and third integrals are both finite and bounded uniformly in $\veps$.
As for the second integral, a change of variables gives
\begin{multline}
        C  \intdm{ \{|\bx| = \veps\} }{ \grad_{\bz} \Psi(\delta|\be_1-\bz|) \cdot \frac{\bz}{|\bz|} \, \Psi(\delta |\bz|) }{\sigma(\bz)} \\
         = C\intdm{\bbS^{d-1}}{ \veps^{d-1} \Psi(\delta \veps) \frac{\varrho(\delta |\be_1-\veps \bw|)}{|\be_1-\veps \bw|} { \frac{\be_1-\veps \bw}{|\be_1 - \veps \bw|} \cdot \bw } }{\sigma(\bw)} := I\,.
\end{multline}
Now for $\bw \in \bbS^{d-1}$ and for  $\veps \in [0,1/4)$ define the function $h_{\bw,\delta}(\veps) := \Psi(\delta |\be_1 - \veps \bw|)$. For any choice of $\bw$, we have that $\veps \mapsto h_{\bw,\delta}(\veps)$ is $C^2$ and its derivatives are uniformly bounded (the bound is also uniform with respect to $\bw$). Thus we can write $I$ as
\begin{equation*}
    I = C\intdm{\bbS^{d-1}}{ \veps^d \Psi(\delta \veps) \frac{h_{\bw,\delta}'(\veps)}{\veps}  }{\sigma(\bw)}\,.
\end{equation*}
Note that $h_{\bw,\delta}'(0) = \varrho(\delta) { \be_1 \cdot \bw }$ and thus $\intdm{\bbS^{d-1}}{h_{\bw,\delta}'(0)}{\sigma(\bw)} = 0$. We then see by applying the mean value theorem that
\begin{equation*}
    I = C\intdm{\bbS^{d-1}}{ \veps^d \Psi(\delta \veps) \frac{h_{\bw,\delta}'(\veps)-h_{\bw,\delta}'(0)}{\veps}  }{\sigma(\bw)} = O \left( \veps^d \Psi(\delta \veps) \right)\,.
\end{equation*}
We claim that $\lim\limits_{\veps \to 0} \veps^d \Psi(\delta \veps) = 0$, and so \eqref{eq:EquivalenceKernel:Proof1} will follow. To see this claim, note that for any nonincreasing function $f \in L^1_{\text{loc}}([0,\infty)) \cap C^0((0,\infty))$
\begin{equation*}
    \frac{x f(x)}{2} \leq \int_{x/2}^{x} f(y) \, \rmd y \to 0 \text{ as } x \to 0\,,
\end{equation*}
by continuity of the integral.
\end{proof}

By the calculations in the proof of Theorem \ref{thm:EquivalenceKernel} it follows that
\begin{equation}\label{eq:pvDiffusion}
    \cD_{\varrho,\veps} \circ \cG_{\varrho,\veps'} \bu(\bx) = \frac{1}{2} \intdm{\bbR^d}{ \varrho_{\text{eq},\veps,\veps'}(|\by|) \frac{2 \bu(\bx) - \bu(\bx+\by) -\bu(\bx-\by) }{|\by|^2} }{\by} \text{ for } \veps\,, \veps' > 0\,.
\end{equation}
Unfortunately it is unclear if the limit as $\veps$, $\veps' \to 0$ can be taken for general kernels satisfying \eqref{assumption:EquivalenceKernel}, even if $\bu$ is smooth. However, for specific examples of $\varrho$ we can show that the integrand on the right-hand side of \eqref{eq:pvDiffusion} is bounded by an $L^1$ function uniformly in $\veps$ and $\veps'$. Then the limit can be taken on both sides of \eqref{eq:pvDiffusion} by \Cref{thm:OperatorsWellDefdForSmoothFxns} and by the Lebesgue Dominated Convergence theorem to conclude that formula \eqref{eq:Diffusion} holds for any $\bu \in C^2_b(\bbR^d)$.
The following examples illustrate the situation.

\begin{example}\label{example:equivalence:FractionalKernel}
Direct calculation shows that the fractional kernel $\varrho_s(|\bseta|)$ satisfies the conditions of \Cref{lma:EquivalenceKernel:Singular}. Moreover, \eqref{eq:pvEquivalenceKernel} for this particular kernel becomes  
   $ \varrho_{s,\text{eq},\veps,\veps'}(|\bseta|) = \frac{C_{d,s,\veps,\veps'}}{|\bseta|^{d+2s-2}}\,,$
where the sequence of constants $C_{d,s,\veps,\veps'}$ is given by 
\[
C_{d,s,\veps,\veps'} := \intdm{\bbR^d}{ \chi_{\{|\be_1-\bz| > \veps' \}} \chi_{\{|\bz| > \veps \}} \frac{1}{|\bz|^{d+2s-2}} \frac{1}{|\be_1-\bz|^{d+2s-2}} { \frac{\be_1-\bz}{|\be_1 - \bz|} \cdot \frac{\bz}{|\bz|} } }{\bz}.
\]
By the same line of reasoning as in the proof of \Cref{lma:EquivalenceKernel:Singular} we see that the constants $C_{d,s,\veps,\veps'}$ converge to a constant $C_{d,s}$ as $\veps$, $\veps' \to 0$. Using the Fourier transform (see \cite{d2020unified}) it follows that $C_{d,s} := \frac{2^{2s} s \Gamma(\frac{d}{2}+s)}{\pi^{d/2} \Gamma(1-s)}$.
We can therefore conclude that \eqref{eq:Diffusion} holds. We summarize this result in the following proposition: 
\begin{proposition}\label{prop:FractionalLaplaceIsDivGrad}
Let $s \in (0,1)$. Suppose that either $\bu \in C^2_b(\bbR^d;\bbR^N)$, or $\bu \in L^1_{2s}(\bbR^d;\bbR^N) \cap \scC^{2s+\sigma}(\bbR^d;\bbR^N)$ for some $\sigma > 0$ small. Then the function $(-\Delta)_{\varrho_s} \bu(\bx)$ defined in \eqref{eq:DiffusionDefinition} coincides with the \textit{fractional Laplacian}
\begin{equation*}
    (-\Delta)^s \bu(\bx) := C_{d,s} \int_{\bbR^d} \frac{2 \bu(\bx)-\bu(\bx+\by) - \bu(\bx-\by)}{|\by|^{d+2s}} \, \rmd \bh\,, \qquad \bx \in \bbR^d\,.
\end{equation*}
Put another way,
\begin{equation*}
    -\cD_s \circ \cG_s \bu(\bx) = (-\Delta)^s \bu(\bx) \text{ for every } \bx \in \bbR^d\,.
\end{equation*}
\end{proposition}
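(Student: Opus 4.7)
\medskip

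\textbf{Proof proposal.} The plan is to derive the identity by passing to the limit $\veps,\veps' \to 0$ in the truncated identity \eqref{eq:pvDiffusion}. On the left-hand side, Theorem \ref{thm:OperatorsWellDefdForSmoothFxns} (case 1 for $\bu \in C^2_b$, case 2 for $\bu \in L^1_{2s} \cap \scC^{2s+\sigma}$) ensures that
\begin{equation*}
    \lim_{\veps,\veps'\to 0} \cD_{s,\veps} \circ \cG_{s,\veps'}\bu(\bx) = \cD_s \circ \cG_s \bu(\bx).
\end{equation*}
On the right-hand side, substitute the explicit form computed in Example \ref{example:equivalence:FractionalKernel}, namely $\varrho_{s,\text{eq},\veps,\veps'}(|\by|) = C_{d,s,\veps,\veps'}/|\by|^{d+2s-2}$, so that the integrand becomes
\begin{equation*}
    C_{d,s,\veps,\veps'}\,\frac{2\bu(\bx)-\bu(\bx+\by)-\bu(\bx-\by)}{|\by|^{d+2s}}.
\end{equation*}
Since Example \ref{example:equivalence:FractionalKernel} already establishes $C_{d,s,\veps,\veps'} \to C_{d,s}$, these constants are uniformly bounded in $\veps,\veps'$, so it suffices to dominate the second factor by a fixed $L^1(\bbR^d)$ function of $\by$ and invoke Lebesgue's dominated convergence theorem.

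For the dominating function, split into $\{|\by|\leq 1\}$ and $\{|\by|>1\}$. On $\{|\by|>1\}$, boundedness $\|\bu\|_{L^\infty} < \infty$ in case 1, or the weighted integrability $\bu \in L^1_{2s}$ in case 2 (after translating the integrand and noting $|\by|^{-(d+2s)}$ is comparable to $(1+|\by+\bx|^{d+2s})^{-1}$ for $|\by|$ large and $\bx$ fixed), yields an $L^1$ bound independent of $\veps,\veps'$. On $\{|\by|\leq 1\}$, the numerator $|2\bu(\bx)-\bu(\bx+\by)-\bu(\bx-\by)|$ must be controlled pointwise: in case 1, a second-order Taylor expansion gives the bound $\|D^2\bu\|_{L^\infty}|\by|^2$, producing an integrand of order $|\by|^{2-d-2s}$, which is integrable near the origin since $s<1$. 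In case 2 with $s<1/2$, the H\"older bound $|\bu(\bx\pm\by)-\bu(\bx)|\leq [\bu]_{C^{0,2s+\sigma}}|\by|^{2s+\sigma}$ yields an integrand of order $|\by|^{\sigma - d}$; in case 2 with $s\geq 1/2$, a first-order Taylor expansion with the $C^{0,2s+\sigma-1}$ remainder for $\grad \bu$ gives $|2\bu(\bx)-\bu(\bx+\by)-\bu(\bx-\by)|\leq C[\grad\bu]_{C^{0,2s+\sigma-1}}|\by|^{2s+\sigma}$ (the odd, linear part $\grad\bu(\bx)\cdot\by$ cancels in the symmetric difference), again producing an $|\by|^{\sigma-d}$ integrand near zero.

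With this uniform $L^1$ dominating function in hand, dominated convergence yields
\begin{equation*}
\lim_{\veps,\veps'\to 0}\,\frac{1}{2}\intdm{\bbR^d}{\varrho_{s,\text{eq},\veps,\veps'}(|\by|)\,\frac{2\bu(\bx)-\bu(\bx+\by)-\bu(\bx-\by)}{|\by|^2}}{\by} = \frac{C_{d,s}}{2}\intdm{\bbR^d}{\frac{2\bu(\bx)-\bu(\bx+\by)-\bu(\bx-\by)}{|\by|^{d+2s}}}{\by},
\end{equation*}
which is precisely $(-\Delta)^s \bu(\bx)$ (note the symmetry $\by \mapsto -\by$ absorbs the factor of $1/2$). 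Equating limits on both sides of \eqref{eq:pvDiffusion} yields $-\cD_s \circ \cG_s \bu(\bx) = (-\Delta)^s \bu(\bx)$, and simultaneously identifies $(-\Delta)_{\varrho_s}\bu(\bx)$ from \eqref{eq:DiffusionDefinition} with the fractional Laplacian.

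The main obstacle is the case 2 analysis: verifying that the $L^1_{2s}$ decay condition (rather than pointwise boundedness) yields a dominating function at infinity independent of $\veps,\veps'$, and that the $\scC^{2s+\sigma}$ regularity produces the required cancellation near the origin in both H\"older regimes. The rest is bookkeeping, since Example \ref{example:equivalence:FractionalKernel} has already handled the delicate part of showing that the equivalence-kernel constants converge.
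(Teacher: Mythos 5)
Your proof follows essentially the same strategy as the paper: pass to the limit in \eqref{eq:pvDiffusion}, invoke Theorem \ref{thm:OperatorsWellDefdForSmoothFxns} on the left, and construct a uniform $L^1$ dominating function on the right from a near-origin Taylor/H\"older bound together with a far-field decay bound, both multiplied by the uniform bound on $C_{d,s,\veps,\veps'}$. Two small remarks. First, the paper's far-field bound in case 2 simply uses $\Vnorm{\bu}_{L^{\infty}(\bbR^d)}$, which is already controlled by the $\scC^{2s+\sigma}(\bbR^d)$ norm, so the detour through $L^1_{2s}$ and the translated-comparability argument is correct but unnecessary. Second, the parenthetical claim that ``the symmetry $\by\mapsto-\by$ absorbs the factor of $1/2$'' is spurious: the symmetric-difference quotient $\big(2\bu(\bx)-\bu(\bx+\by)-\bu(\bx-\by)\big)/|\by|^{d+2s}$ is already even in $\by$, so that change of variables yields no extra factor of two. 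The $1/2$ is instead accounted for by the normalization of the fractional Laplacian, under which the principal-value form with constant $C_{d,s}$ coincides with $\tfrac{C_{d,s}}{2}$ times the symmetric-difference integral --- which is precisely the quantity produced by your limit.
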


\begin{proof}
    If $\bu$ is in either set of function spaces, the limit as $\veps,\veps' \to 0$ can be taken on the left-hand side of \eqref{eq:pvDiffusion} by \Cref{thm:OperatorsWellDefdForSmoothFxns}. The limit on the right-hand side will follow by the Lebesgue Dominated Convergence theorem. First note that $C_{d,s,\veps,\veps'}$ is bounded by some constant $\wt{C}(d,s)$.
    Then the integrand is majorized by
    \begin{equation*}
        4 \wt{C}(d,s) \chi_{ \{|\by| < 1 \} } \frac{ \sum_{|\gamma| = 2} \Vnorm{D^{\gamma} \bu}_{L^{\infty}(\bbR^d)} }{|\by|^{d+2s-2}} 
            + 4 \wt{C}(d,s) \chi_{ \{|\by| \geq 1 \} } \frac{ \Vnorm{\bu}_{L^{\infty}(\bbR^d)} }{|\by|^{d+2s}}
    \end{equation*}
    in case 1, or by
    \begin{equation*}
        2 \wt{C}(d,s) \chi_{ \{|\by| < 1 \} } \frac{ [\bu]_{C^{0,2s+\sigma}(\bbR^d)} }{|\by|^{d-\sigma}}
            + 4 \wt{C}(d,s) \chi_{ \{|\by| \geq 1 \} } \frac{ \Vnorm{\bu}_{L^{\infty}(\bbR^d)} }{|\by|^{d+2s}}
    \end{equation*}
    in case 2 with $s< 1/2$. In case 2 with $s \geq 1/2$ we have the bound
    \begin{equation*}
    \begin{split}
        \left| \frac{2 \bu(\bx) - \bu(\bx+\by) - \bu(\bx-\by)}{|\by|^{d+2s}} \right|
        =& \left|
        \chi_{ \{|\by| < 1 \} } \frac{ \int_0^1 \big( \grad \bu(\bx-t\by) - \grad \bu(\bx+t\by) \big) \by \, \rmd t }{|\by|^{d+2s}} \right.\\ 
            &\left. + \chi_{ \{|\by| \geq 1 \} } \frac{2 \bu(\bx) - \bu(\bx+\by) - \bu(\bx-\by)}{|\by|^{d+2s}} \right| \\
        \leq&
        \chi_{ \{|\by| < 1 \} } \frac{ [\grad \bu]_{C^{0,2s+\sigma-1}(\bbR^d)} }{|\by|^{d-\sigma}}
            + 4 \chi_{ \{|\by| \geq 1 \} } \frac{ \Vnorm{\bu}_{L^{\infty}(\bbR^d)} }{|\by|^{d+2s}}\,.
    \end{split}
    \end{equation*}
    In all cases the bounding function is in $L^1(\bbR^d)$, and the proof is complete.
\end{proof}

\end{example}


\begin{example}
The truncated fractional kernel $\varrho_{s,\delta}(|\bseta|)$ does not satisfy the conditions of \Cref{lma:EquivalenceKernel:Singular}. Nevertheless, when $d=1$ the formula \eqref{eq:pvEquivalenceKernel} holds for almost every $\eta \neq 0$. This can be seen directly by computing the equivalence kernel:
\begin{equation*}
    \varrho_{s,\delta,\text{eq},\veps,\veps'}(|\eta|) = \frac{(c_{1,s})^2}{|\eta|^{1+2s-2}} \intdm{\bbR}{ 
    \chi_{ \{ \veps< |z| < \frac{\delta}{|\eta|} \} }  \chi_{ \{ \veps' < |z-1| < \frac{\delta}{|\eta|} \} }
    \frac{ 1-z}{|1-z|^{2+s}} \cdot \frac{z}{|z|^{2+s}}  }{z}\,, \qquad \eta \neq 0\,.
\end{equation*}
The integral can be computed explicitly. Let $_2 F_1(a,b;c;z)$ denote the \textit{hypergeometric function}; see \cite[Equation 15.1.1]{abramowitz1988handbook} and \Cref{apdx:HyperGeometricFxn} for the definition.

The derivative identity \eqref{eq:HypergeoDerivFormula1} implies that the function
\begin{equation*}
    F_s(x) :=
        \begin{cases}
            - \frac{1}{s(-x)^s} \text{}_2 F_1(-s,1+s;1-s;x)\,, & \quad x \leq 0\,, \\
            \frac{1}{s(1-x)^s} \text{}_2 F_1(-s,1+s;1-s;1-x)\,, & \quad 0 < x < 1\,, \\
            \frac{1}{s(x-1)^s} \text{}_2 F_1(-s,1+s;1-s;1-x)\,, & \quad x < 1\,, \\
        \end{cases}
\end{equation*}
satisfies $F_s'(x) = \frac{ 1-x}{|1-x|^{2+s}} \cdot \frac{x}{|x|^{2+s}} $ for all $x \in \bbR \setminus \{0,1\}$. Therefore,
{\small \begin{align*}
   & \varrho_{s,\delta,\text{eq},\veps,\veps'}(|\eta|)\\ &= \frac{(c_{1,s})^2}{|\eta|^{1+2s-2}}
    \begin{cases}
        F_s(-\veps) - F_s(1-\frac{\delta}{|\eta|}) + F_s(1-\veps') - F_s(\veps) + F_s(\frac{\delta}{|\eta|}) - F_s(1+\veps')\,, & \quad \frac{\delta}{|\eta|} > 1\,, \\
        F_s(\frac{\delta}{|\eta|}) - F_s(1-\frac{\delta}{|\eta|})\,, & \quad \frac{1}{2} < \frac{\delta}{|\eta|} < 1\,, \\
        0\,, & \quad \frac{1}{2} \geq \frac{\delta}{|\eta|}\,.
    \end{cases}
\end{align*} }
Now we compute the limit as $\veps$, $\veps' \to 0$. To do this, we need the following limits for $F_s(z)$.

\begin{theorem}\label{thm:PVofFs}
    For $s \in (0,1)$,
    \begin{equation}\label{eq:AntiDer:Limitat0}
        \lim\limits_{\veps \to 0} F_s(\veps) - F_s(-\veps) = \kappa_s :=  \begin{cases}
        \frac{\Gamma(1-s) \Gamma(-s)}{s \Gamma(-2s)}\,, &\quad \text{ if } s \neq 1/2\,, \\
        0 &\quad \text{ if } s = 1/2\,,
    \end{cases}
    \end{equation}
    and
    \begin{equation}\label{eq:AntiDer:Limitat1}
        \lim\limits_{\veps \to 0} F_s(1+\veps) - F_s(1-\veps) = 0\,.
    \end{equation}
\end{theorem}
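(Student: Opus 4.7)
The plan is to treat the two limits separately, handling (2) first as a warmup and then (1) by using transformation identities from the appendix to extract the constant $\kappa_s$ from a seemingly divergent quantity.

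For \eqref{eq:AntiDer:Limitat1}, the key point is that the two terms combine neatly:
\begin{equation*}
F_s(1+\veps) - F_s(1-\veps) = \frac{1}{s\veps^s}\Big[{}_2F_1(-s,1+s;1-s;-\veps) - {}_2F_1(-s,1+s;1-s;\veps)\Big].
\end{equation*}
Expanding ${}_2F_1$ in its power series about $0$, the even-order coefficients cancel in the bracket, so the bracket is $O(\veps)$. Since $s<1$, the prefactor $\veps^{-s}$ cannot overwhelm this, and $F_s(1+\veps) - F_s(1-\veps) = O(\veps^{1-s}) \to 0$.

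For \eqref{eq:AntiDer:Limitat0} the obstruction is that $F_s(\veps)$ is defined through ${}_2F_1$ evaluated at $1-\veps$, which is near the boundary of the disc of convergence; direct Taylor expansion fails. My plan is to first apply Euler's transformation ${}_2F_1(-s,1+s;1-s;z) = (1-z)^{-s}\,{}_2F_1(1,-2s;1-s;z)$ from Appendix \ref{apdx:HyperGeometricFxn} with $z = 1-\veps$ to rewrite
\begin{equation*}
F_s(\veps) \;=\; \frac{1}{s\,(\veps(1-\veps))^{s}}\,{}_2F_1(1,-2s;1-s;1-\veps).
\end{equation*}
Next, I apply the connection formula (A\&S 15.3.6) to ${}_2F_1(1,-2s;1-s;1-\veps)$ with parameters $a=1$, $b=-2s$, $c=1-s$, noting $c-a-b = s$ is not a nonpositive integer for any $s \in (0,1)$, so the formula is valid. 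A direct computation using $\Gamma(1-s) = -s\Gamma(-s)$ and $\Gamma(1+s) = s\Gamma(s)$ shows the first coefficient $\Gamma(1-s)\Gamma(s)/\big(\Gamma(-s)\Gamma(1+s)\big)$ equals $-1$, while the second coefficient is precisely $s\kappa_s$. Using the classical identity ${}_2F_1(-s,1+s;1+s;\veps) = (1-\veps)^s$, the connection formula reduces to
\begin{equation*}
{}_2F_1(1,-2s;1-s;1-\veps) \;=\; -\,{}_2F_1(1,-2s;1-s;\veps) \,+\, s\kappa_s\,\veps^s(1-\veps)^s,
\end{equation*}
which upon dividing by $s(\veps(1-\veps))^s$ yields
\begin{equation*}
F_s(\veps) \;=\; \kappa_s \,-\, \frac{{}_2F_1(1,-2s;1-s;\veps)}{s\,(\veps(1-\veps))^{s}}.
\end{equation*}

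With this representation, the proof finishes by Taylor expansion. Writing $F_s(\veps) - F_s(-\veps) = \kappa_s + (s\veps^s)^{-1} B(\veps)$ with
\begin{equation*}
B(\veps) := -\frac{{}_2F_1(1,-2s;1-s;\veps)}{(1-\veps)^s} + {}_2F_1(-s,1+s;1-s;-\veps),
\end{equation*}
I expand both hypergeometrics to first order, together with $(1-\veps)^{-s} = 1 + s\veps + O(\veps^2)$, and verify both the cancellation $B(0) = -1 + 1 = 0$ and that the $O(\veps)$ coefficients in fact combine consistently, giving $B(\veps) = O(\veps)$. Hence the correction term is $O(\veps^{1-s}) \to 0$, proving the limit equals $\kappa_s$. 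The main subtlety, which I will address explicitly, is the case $s=1/2$: here $\Gamma(-2s) = \Gamma(-1)$ has a pole, so $\kappa_{1/2} = 0$ in the extended sense; one must check that the connection formula remains well-defined (which it does, since $c-a-b=1/2 \notin \mathbb{Z}_{\leq 0}$) and that the constant identified by the above procedure agrees with the definition of $\kappa_s$ at $s=1/2$, recovering the stated value $0$.
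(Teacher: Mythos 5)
Your proof is correct and uses the same key ingredient as the paper — the connection formula A\&S~15.3.6 relating ${}_2F_1$ near $z=1$ to ${}_2F_1$ near $z=0$, together with the identity ${}_2F_1(-s,1+s;1+s;\veps)=(1-\veps)^s$ and the standard $\Gamma$-recursion to identify the coefficients $-1$ and $s\kappa_s$. The only organizational difference is that you first pass through Euler's transformation to rewrite $F_s(\veps)$ in terms of ${}_2F_1(1,-2s;1-s;1-\veps)$ before applying 15.3.6, which gives the clean closed form $F_s(\veps)=\kappa_s-\tfrac{1}{s}(\veps(1-\veps))^{-s}\,{}_2F_1(1,-2s;1-s;\veps)$. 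This buys you a genuinely nicer treatment of $s=1/2$: since the only $s$-dependent condition in 15.3.6 is that $c-a-b=s$ not be an integer (true for all $s\in(0,1)$), and the pole of $\Gamma(-2s)$ at $s=1/2$ simply drives the coefficient $s\kappa_s$ to $0$, your argument handles $s=1/2$ uniformly, whereas the paper resorts to an explicit closed-form evaluation of ${}_2F_1(-\tfrac12,\tfrac32;\tfrac12;z)$ for that case. One small remark: once you observe $B(0)=0$, the bound $B(\veps)=O(\veps)$ is automatic because $B$ is analytic in a neighborhood of $0$ — you do not need to verify that the first-order coefficients "combine consistently." Also, the relevant hypothesis for 15.3.6 is that $c-a-b$ is not an integer of any sign, not merely not a nonpositive one, though for $s\in(0,1)$ both statements hold so nothing breaks.
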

See \Cref{apdx:HyperGeometricFxn} for the proof. An immediate corollary is the explicit formula for $\varrho_{s,\delta,\text{eq}}$.


\begin{corollary}
For all $|\eta| \neq 0$ and $|\eta| \neq \delta$
\begin{equation}\label{eq:Truncated:DefnOfEquivKernel}
    \varrho_{s,\delta,\text{eq}}(|\eta|) =
    \lim\limits_{\veps \to 0} \varrho_{s,\delta,\text{eq},\veps,\veps'}(|\eta|) = 
    \frac{(c_{1,s})^2 }{|\eta|^{1+2s-2}} G_{s} 
\left( \frac{\delta}{|\eta|} \right)  \,,
\end{equation}
where $G_s : (0,\infty) \setminus \{1\} \to \bbR$ is defined as
\begin{equation}\label{eq:DefnOfRemainder}
    G_s(x) :=
    \begin{cases}
        0\,, & \quad 0 < x \leq \frac{1}{2}\,, \\
        F_s(x) - F_s(1-x)\,, & \quad \frac{1}{2} < x < 1 \,, \\
        F_s(x) - F_s(1-x) - \kappa_s \,, & \quad 1 < x \,. \\
    \end{cases}
\end{equation}
\end{corollary}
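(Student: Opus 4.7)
The corollary follows by taking the limit $\veps, \veps' \to 0$ in each of the three branches of the piecewise formula for $\varrho_{s,\delta,\text{eq},\veps,\veps'}(|\eta|)$ displayed just above the corollary, using Theorem \ref{thm:PVofFs}.

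The plan is as follows. In the range $\delta/|\eta| \leq 1/2$, the integrand vanishes identically for all $\veps, \veps' > 0$, so the limit is $0$, matching the first branch of the definition \eqref{eq:DefnOfRemainder} of $G_s$. In the range $1/2 < \delta/|\eta| < 1$, the explicit formula $F_s(\delta/|\eta|) - F_s(1-\delta/|\eta|)$ already has no dependence on $\veps$ or $\veps'$, so no limiting argument is required and the formula reproduces the second branch of $G_s$ directly.

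The only case requiring work is $\delta/|\eta| > 1$, where the expression obtained from evaluating the antiderivative $F_s$ at the boundaries of the integration region is
\begin{equation*}
F_s(-\veps) - F_s\Bigl(1-\tfrac{\delta}{|\eta|}\Bigr) + F_s(1-\veps') - F_s(\veps) + F_s\Bigl(\tfrac{\delta}{|\eta|}\Bigr) - F_s(1+\veps').
\end{equation*}
The idea is to regroup the singular pairs: write this as
\begin{equation*}
\bigl[ F_s(\tfrac{\delta}{|\eta|}) - F_s(1-\tfrac{\delta}{|\eta|}) \bigr] - \bigl[ F_s(\veps) - F_s(-\veps) \bigr] - \bigl[ F_s(1+\veps') - F_s(1-\veps') \bigr].
\end{equation*}
By the identity \eqref{eq:AntiDer:Limitat0} of Theorem \ref{thm:PVofFs}, the second bracket tends to $\kappa_s$, and by \eqref{eq:AntiDer:Limitat1} the third bracket tends to $0$. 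The first bracket does not depend on $\veps, \veps'$ and is finite since $\delta/|\eta| \neq 1$, so the total limit equals $F_s(\delta/|\eta|) - F_s(1-\delta/|\eta|) - \kappa_s$, which is precisely the third branch of \eqref{eq:DefnOfRemainder}. Multiplying by the prefactor $(c_{1,s})^2/|\eta|^{1+2s-2}$ yields \eqref{eq:Truncated:DefnOfEquivKernel}.

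There is no real obstacle here beyond bookkeeping: the analytical content of the limit is entirely contained in Theorem \ref{thm:PVofFs}, and the rest is algebraic regrouping of the boundary terms produced by the antiderivative $F_s$. The main thing to verify carefully is that the singular contributions at $z=0$ (which produce $\kappa_s$) and at $z=1$ (which cancel) are correctly paired with the truncation parameters $\veps$ and $\veps'$, respectively, consistently with the definition of the principal-value-type integral defining $\varrho_{s,\delta,\text{eq},\veps,\veps'}$.
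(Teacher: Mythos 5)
Your proof is correct and matches the paper's intended argument exactly: the paper labels this result ``an immediate corollary'' of Theorem \ref{thm:PVofFs} and supplies the piecewise formula for $\varrho_{s,\delta,\text{eq},\veps,\veps'}$ just above, leaving precisely the regrouping and limit-taking you carry out. Your pairing of the boundary terms --- $F_s(\veps)-F_s(-\veps)\to\kappa_s$ coming from the $\veps$-truncation at $z=0$ and $F_s(1+\veps')-F_s(1-\veps')\to 0$ coming from the $\veps'$-truncation at $z=1$ --- is the right identification and is consistent with the indicator functions $\chi_{\{\veps<|z|<\delta/|\eta|\}}\chi_{\{\veps'<|z-1|<\delta/|\eta|\}}$ in the definition, so the algebra checks out.
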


We now investigate properties of $\varrho_{s,\delta,\text{eq}}$ that are desirable for applications. 
To do this we need the following results concerning $G_s$.

\begin{theorem}\label{thm:LimitsOfGs}
    For every $s \in (0,1)$ and for every $\tau > 0$, the function $G_s \big|_{(0,\infty) \setminus (1-\tau,1+\tau) }$ is continuous and bounded. Moreover,
    \begin{equation}\label{eq:LimOfGAtInfty}
        \lim\limits_{x \to \infty} G_s(x) = \frac{2\Gamma(1-s) \Gamma(1+2s)}{s\Gamma(1+s) } - \kappa_s
    \end{equation}
    and
    \begin{equation}\label{eq:LimOfGAt1}
        \lim\limits_{x \to 1} |x-1|^s G_s(x) = \frac{2}{s}\,.
    \end{equation}
\end{theorem}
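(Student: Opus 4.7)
My plan is to treat the three claims in sequence, reducing each to a precise asymptotic expansion of $F_s(x)$ and $F_s(1-x)$ via standard transformation and connection formulas for the hypergeometric function ${}_2F_1(-s,1+s;1-s;\cdot)$, which are collected in Appendix \ref{apdx:HyperGeometricFxn}. Continuity and boundedness on $(0,\infty)\setminus(1-\tau,1+\tau)$ will follow from real-analyticity of ${}_2F_1(-s,1+s;1-s;z)$ on $\bbR \setminus \{1\}$, so that $F_s$ is continuous on $(-\infty,1)\cup(1,\infty)$, plus a boundary check at the seam $x = 1/2$ where the piecewise definition trivially gives $G_s(1/2) = F_s(1/2) - F_s(1/2) = 0$, matching the first case; boundedness on $(1+\tau,\infty)$ will be inherited from the finite limit in \eqref{eq:LimOfGAtInfty}.

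For \eqref{eq:LimOfGAtInfty}, I would first note that for $x > 1$ the piecewise definitions of $F_s$ on $x \leq 0$ and $x > 1$ combine, after checking signs, into
\[
F_s(x) - F_s(1-x) = \frac{2}{s(x-1)^s}\,{}_2F_1(-s, 1+s; 1-s; 1-x).
\]
I would then apply Pfaff's transformation ${}_2F_1(a, b; c; z) = (1-z)^{-a}\,{}_2F_1(a, c-b; c; z/(z-1))$ with $(a,b,c,z) = (-s, 1+s, 1-s, 1-x)$, which yields ${}_2F_1(-s, 1+s; 1-s; 1-x) = x^s\,{}_2F_1(-s, -2s; 1-s; (x-1)/x)$. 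Since $c - a - b = 1 + 2s > 0$, the transformed hypergeometric has a finite limit at $(x-1)/x \to 1$, evaluated by Gauss's summation formula as $\Gamma(1-s)\Gamma(1+2s)/\Gamma(1+s)$. Assembling the pieces and subtracting $\kappa_s$ delivers \eqref{eq:LimOfGAtInfty}.

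For \eqref{eq:LimOfGAt1}, I would apply the Gauss connection formula at $z = 1$,
\[
{}_2F_1(a,b;c;z) = \tfrac{\Gamma(c)\Gamma(c-a-b)}{\Gamma(c-a)\Gamma(c-b)}\, {}_2F_1(a,b;a+b-c+1;1-z) + (1-z)^{c-a-b}\tfrac{\Gamma(c)\Gamma(a+b-c)}{\Gamma(a)\Gamma(b)}\, {}_2F_1(c-a,c-b;c-a-b+1;1-z),
\]
with $(a,b,c) = (-s, 1+s, 1-s)$. Since $a + b - c + 1 = 1 + s$, the first hypergeometric on the right collapses via the Euler identity ${}_2F_1(a,b;b;w) = (1-w)^{-a}$ to the monomial $z^s$, and the second evaluates to $1$ at $z = 1$. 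The key cancellation is that the singular prefactor $\Gamma(1-s)\Gamma(s)/(\Gamma(-s)\Gamma(1+s))$ equals $-1$ by the reflection identity $\Gamma(1-s)\Gamma(s) = -\Gamma(-s)\Gamma(1+s) = \pi/\sin(\pi s)$ up to sign. Substituting back I would obtain, as $x \to 1^-$, the two-term expansion $F_s(1-x) = \kappa_s - \tfrac{1}{s(1-x)^s} + o(1)$ together with $F_s(x) \sim \tfrac{1}{s(1-x)^s}$, so that $G_s(x) \sim 2/(s(1-x)^s) - \kappa_s$ and multiplication by $|x-1|^s$ gives $2/s$. A parallel computation for $x \to 1^+$ uses the first case of $F_s$; here the constant $-\kappa_s$ that was built into the $x > 1$ branch of $G_s$ cancels exactly the analogous boundary term, so both one-sided limits agree.

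The main obstacle is the careful bookkeeping of the $\Gamma$-function coefficients and the sign produced by the reflection identity: the definition of $\kappa_s$ through $\Gamma(-2s)$ is engineered precisely so that subtracting it in the $x > 1$ piece of $G_s$ matches the corresponding constant inherited from $F_s(1-x)$ on the $x < 1$ side, producing the clean leading singularity $2/(s|x-1|^s)$ on both sides. The degenerate case $s = 1/2$, where $\Gamma(-2s)$ has a simple pole and $\kappa_s = 0$ by convention, is handled by continuity of the Gauss coefficients in $s$, as the pole cancels against a zero elsewhere in the formula.
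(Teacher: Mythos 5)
Your proposal is correct, and for the limit at infinity \eqref{eq:LimOfGAtInfty} it takes a genuinely cleaner route than the paper does. The paper invokes the connection formula at $z = \infty$ (\cite[Eq.\ 15.3.7]{abramowitz1988handbook}), whose coefficients involve $\Gamma(-2s)$ and $\Gamma(-1-2s)$; this forces a separate, explicit computation for $s=1/2$ where both hit poles. Your combination of Pfaff's transformation with Gauss's summation, ${}_2F_1(-s,1+s;1-s;1-x) = x^s\,{}_2F_1(-s,-2s;1-s;(x-1)/x) \to x^s\,\Gamma(1-s)\Gamma(1+2s)/\Gamma(1+s)$, never produces the problematic $\Gamma(-2s)$ and so handles all $s\in(0,1)$ uniformly (at $s=1/2$ the series ${}_2F_1(-1/2,-1;1/2;w)=1+w$ even terminates). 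This is a modest but real simplification. For the limit \eqref{eq:LimOfGAt1} at $x=1$ and for continuity/boundedness on $(0,\infty)\setminus(1-\tau,1+\tau)$, your approach matches the paper's: both rest on the $z=1$ connection formula \cite[Eq.\ 15.3.6]{abramowitz1988handbook} with the reflection-identity sign $\Gamma(1-s)\Gamma(s)/(\Gamma(-s)\Gamma(1+s)) = -1$, and on the observation that $G_s(1/2^+) = F_s(1/2)-F_s(1/2)=0$ matches the zero piece.

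One small inaccuracy in your narration: for $x\to 1^+$ you write that the constant $-\kappa_s$ in the $x>1$ branch of $G_s$ ``cancels exactly the analogous boundary term.'' There is no such boundary constant on that side; for $x>1$ one simply has $F_s(x)-F_s(1-x) = \tfrac{2}{s(x-1)^s}\,{}_2F_1(-s,1+s;1-s;1-x)$ with argument $1-x \to 0^-$, so the series is regular and the $\kappa_s$ term is simply annihilated by the factor $(x-1)^s$. The conclusion is unchanged, but the phrase suggests a cancellation mechanism that does not occur. You could also tighten the boundedness argument by saying explicitly that $F_s$ is real-analytic on each of the intervals $x<0$, $0<x<1$, $x>1$ (the ${}_2F_1$ arguments stay away from $1$ there), so $G_s$ is continuous on $(0,1-\tau]$ and $[1+\tau,\infty)$ with the only piecewise seam at $x=1/2$ already checked, and boundedness on $[1+\tau,\infty)$ follows from continuity plus the finite limit \eqref{eq:LimOfGAtInfty}.
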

See \Cref{apdx:HyperGeometricFxn} for the proof.

\begin{theorem}[Properties of $\varrho_{s,\delta,\text{eq}}$]
    Let $\delta > 0$ and $s \in (0,1)$. Then $\varrho_{s,\delta,\text{eq}}$ is finite and differentiable for all $\eta \in \bbR \setminus \{-\delta,0,\delta \}$. At $|\eta| = \delta$ the function has a singularity of order $s$; that is,
    \begin{equation}\label{eq:TruncKernel:LimAt1}
        \lim\limits_{|\eta| \to \delta} |\eta|^{1+2s-2} \left| \frac{\delta}{|\eta|} -1 \right|^{s} \varrho_{s,\delta,\text{eq}}(|\eta|) = \frac{2 (c_{1,s})^2}{s}.
    \end{equation}
    Additionally, $\varrho_{s,\delta,\text{eq}}$ is compactly supported with $\supp \varrho_{s,\delta,\text{eq}} = \overline{B(0,2\delta)}$,
    \begin{equation}\label{eq:TruncEquivKernel:Nonnegative}
        \varrho_{s,\delta,\text{eq}}(|\eta|) \geq 0 \text{ for all } \eta \in \bbR \setminus \{ -\delta, 0, \delta \}\,,
    \end{equation}
    and
    \begin{equation}\label{eq:TruncEquivKernel:Integrable}
        \varrho_{s,\delta,\text{eq}} \in L^1(\bbR^d)\,.
    \end{equation}
    Moreover, $\varrho_{s,\delta,\text{eq}}$ is consistent with $\varrho_{s,\text{eq}}$; that is, for every fixed $|\eta| > 0$
    \begin{equation}\label{eq:TruncEquivKernel:LimAsDeltaToInf}
        \lim\limits_{\delta \to \infty} \varrho_{s,\delta,\text{eq}}(|\eta|) = \frac{C_{1,s}}{ |\eta|^{1+2s-2}} = \varrho_{s,\text{eq}}(|\eta|)\,.
    \end{equation}
\end{theorem}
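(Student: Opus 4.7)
The plan is to extract the explicit representation
$$\varrho_{s,\delta,\text{eq}}(|\eta|) \;=\; \frac{(c_{1,s})^2}{|\eta|^{2s-1}} \, G_s\!\left(\frac{\delta}{|\eta|}\right)$$
from \eqref{eq:Truncated:DefnOfEquivKernel} and then deduce every claim from the regularity, asymptotic, and structural properties of $G_s$ already assembled in \Cref{thm:LimitsOfGs} and the definition \eqref{eq:DefnOfRemainder}. The map $\eta \mapsto \delta/|\eta|$ is smooth on $\bbR \setminus \{0\}$ and sends $|\eta| = \delta$ to $x = 1$, so issues about the three singular points $\{-\delta, 0, \delta\}$ correspond precisely to the three special points $\{0, 1, \infty\}$ for $G_s$.

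For finiteness and differentiability on $\bbR \setminus \{-\delta, 0, \delta\}$, I would note that $F_s$ is $C^\infty$ on each of the three open pieces $(-\infty,0)$, $(0,1)$, $(1,\infty)$ of its domain by the corresponding analyticity of ${}_2F_1$, so $G_s$ is $C^\infty$ on $(0,1/2) \cup (1/2,1) \cup (1,\infty)$ and in fact smoothly extends to be zero on $(0,1/2]$ (because $F_s$ and $F_s(1-\cdot)$ cancel along an antiderivative argument at $x = 1/2$). Combining with smoothness of $\eta \mapsto \delta/|\eta|$ away from $\eta = 0$ yields the claim. The singularity estimate \eqref{eq:TruncKernel:LimAt1} then follows immediately from \eqref{eq:LimOfGAt1} by substituting $x = \delta/|\eta|$ and using continuity of $|\eta|^{2s-1}$ at $|\eta| = \delta$. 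For the support claim, the vanishing $G_s \equiv 0$ on $(0, 1/2]$ forces $\varrho_{s,\delta,\text{eq}}(|\eta|) = 0$ whenever $|\eta| \geq 2\delta$, giving $\supp \varrho_{s,\delta,\text{eq}} \subseteq \overline{B(0,2\delta)}$; the reverse containment is a consequence of the blow-up in \eqref{eq:TruncKernel:LimAt1}, which makes $\varrho_{s,\delta,\text{eq}}$ nonzero on a full neighborhood of $|\eta| = \delta$ inside $B(0,2\delta)$ together with its continuity proved above.

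Integrability \eqref{eq:TruncEquivKernel:Integrable} is a split into three regions. Near $|\eta| = 0$ the prefactor $|\eta|^{1-2s}$ is integrable (as $2s-1 < 1$) and $G_s(\delta/|\eta|)$ stays bounded thanks to \eqref{eq:LimOfGAtInfty}. Near $|\eta| = \delta$ the singularity is of order $s < 1$ by \eqref{eq:TruncKernel:LimAt1}, hence integrable. On $\{|\eta| > 2\delta\}$ the kernel vanishes by the support statement. The consistency claim \eqref{eq:TruncEquivKernel:LimAsDeltaToInf} is a direct pointwise limit in $\delta$: for fixed $\eta \neq 0$, $\delta/|\eta| \to \infty$, so
$$\lim_{\delta \to \infty} \varrho_{s,\delta,\text{eq}}(|\eta|) \;=\; \frac{(c_{1,s})^2}{|\eta|^{2s-1}} \left[\frac{2\Gamma(1-s)\Gamma(1+2s)}{s\Gamma(1+s)} - \kappa_s\right]$$
by \eqref{eq:LimOfGAtInfty}, and one verifies via standard Gamma-duplication and reflection identities that this bracketed constant times $(c_{1,s})^2$ agrees with $C_{1,s}$ as identified in \Cref{example:equivalence:FractionalKernel}. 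This is a routine but important \emph{consistency check} between the hypergeometric constants and the Fourier-derived constant $C_{1,s}$.

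The main obstacle is the nonnegativity \eqref{eq:TruncEquivKernel:Nonnegative}, which reduces to proving $G_s(x) \geq 0$ for all $x > 1/2$. On $(1/2,1)$ this amounts to showing $F_s(x) \geq F_s(1-x)$, while on $(1,\infty)$ one must show $F_s(x) - F_s(1-x) \geq \kappa_s$, where the reflected argument $1 - x < 0$ activates the first branch of $F_s$. I would attack this by observing that $F_s$ is an antiderivative of the manifestly \emph{signed} integrand $(1-x)|1-x|^{-2-s} \cdot x|x|^{-2-s}$ appearing in the defining integral for $\varrho_{s,\delta,\text{eq},\veps,\veps'}$, and then rewriting $G_s(\delta/|\eta|)$ back as a single improper principal-value integral whose nonnegativity can be read off by a pairing/symmetry argument (grouping the contributions from $\{z < 0\}$ and $\{z > 1\}$ against the positive contribution from $\{0 < z < 1\}$, then invoking the convergence to $\kappa_s$ via \eqref{eq:AntiDer:Limitat0}). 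If a purely analytic proof via hypergeometric identities proves recalcitrant, an alternative is to invoke positivity of $-\cD_{s,\delta} \circ \cG_{s,\delta}$ as an operator (it is, after all, the composition $\cD_{s,\delta}$ with its formal negative adjoint $\cG_{s,\delta}$) together with the representation \eqref{eq:DiffusionDefinition} to force $\varrho_{s,\delta,\text{eq}} \geq 0$ a.e.
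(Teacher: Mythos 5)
Your treatment of finiteness, differentiability, the support, the singularity order \eqref{eq:TruncKernel:LimAt1}, integrability \eqref{eq:TruncEquivKernel:Integrable}, and the consistency limit \eqref{eq:TruncEquivKernel:LimAsDeltaToInf} all track the paper's argument faithfully: everything is read off from $\varrho_{s,\delta,\text{eq}}(|\eta|) = (c_{1,s})^2 |\eta|^{1-2s}\, G_s(\delta/|\eta|)$ together with \Cref{thm:LimitsOfGs}, the region-splitting near $r=1$, and the Gamma-function identities that reconcile the hypergeometric constant with $C_{1,s}$.

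The genuine gap is exactly the piece you flagged as the main obstacle: nonnegativity \eqref{eq:TruncEquivKernel:Nonnegative}. Neither of your two suggested routes closes it. The ``pairing/symmetry'' argument is left as a sketch, and it is not clear that grouping the negative contributions from $\{z<0\}\cup\{z>1\}$ against the positive contribution from $(0,1)$ and invoking \eqref{eq:AntiDer:Limitat0} actually produces a sign; the principal-value limit $\kappa_s$ has a sign that itself changes with $s$, so one would have to do real work to control the cancellation. The operator-positivity alternative is more seriously flawed: positivity of the quadratic form $\bu \mapsto \langle -\cD_s\circ\cG_s\,\bu,\bu\rangle$ only yields nonnegativity of a Fourier-side symbol (essentially the positive-definiteness of the associated bilinear form), not pointwise nonnegativity of the kernel $\varrho_{s,\delta,\text{eq}}$; translation-invariant operators with nonnegative symbol need not have nonnegative kernels, so this implication does not go through.

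What the paper does instead is entirely elementary and avoids both issues. On $(1/2,1)$ it observes $F_s'(x) = \frac{1-x}{|1-x|^{2+s}}\cdot\frac{x}{|x|^{2+s}} > 0$, so $F_s$ is increasing on $(0,1)$ and hence $G_s(x) = F_s(x) - F_s(1-x) \geq 0$ for $x>1/2$. For $x>1$ it differentiates $t \mapsto F_s(\delta/t) - F_s(1-\delta/t) - \kappa_s$ and checks the sign directly to conclude that $G_s$ is \emph{decreasing} in $x$ on $(1,\infty)$; combined with $\lim_{x\to\infty} G_s(x) = C_{1,s}/(c_{1,s})^2 > 0$ (established through \eqref{eq:LimOfGAtInfty} and the Gamma-function computation in the appendix), this yields $G_s(x) \geq \lim_{x'\to\infty} G_s(x') > 0$ for all $x > 1$. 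You should replace your nonnegativity sketch with this monotonicity-plus-boundary-value argument; the rest of the proposal is sound.
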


\begin{proof}
    The smoothness and compact support of $\varrho_{s,\delta,\text{eq}}$ is apparent from the definition, and \eqref{eq:TruncKernel:LimAt1} follows easily from \eqref{eq:LimOfGAt1}.
    To see \eqref{eq:TruncEquivKernel:Nonnegative}, we recall that $F_s'(x) = \frac{1-x}{|1-x|^{2+s}} \cdot \frac{x}{|x|^{2+s}}$, so therefore $F_s(x)$ is increasing for $x \in (0,1)$, and thus $\varrho_{s,\delta,\text{eq}}(|\eta|) \geq 0$ for $\delta < |\eta| < 2 \delta$.
    Next, for $t \in (0,\delta)$
    \begin{equation*}
        \frac{d}{dt} \Big( F_s \Big( \frac{\delta}{t} \Big) - F_s \Big( 1-\frac{\delta}{t} \Big) - \kappa_s \Big)
        = 2 \frac{1-\frac{\delta}{t}}{|1-\frac{\delta}{t}|^{1+s}} \cdot \frac{\frac{\delta}{t} }{ |\frac{\delta}{t}|^{2+s} } \cdot \Big( \frac{-\delta}{t^2} \Big) > 0\,.
    \end{equation*}
    To see that $\varrho_{s,\delta,\text{eq}}(|\eta|) \geq 0$ for $0 < |\eta| < \delta$ it suffices to show that
    \begin{equation}\label{eq:TruncKernel:LimAt0}
        \lim\limits_{|\eta| \to 0} |\eta|^{1+2s-2} \varrho_{s,\delta,\text{eq}}(|\eta|) = (c_{1,s})^2 \left( \frac{2 \Gamma(1-s) \Gamma(1+2s)}{s \Gamma(1+s)} - \kappa_s \right) = C_{1,s} \,,
    \end{equation}
    where $C_{1,s} = \frac{2^{2s} s \Gamma(\frac{1}{2}+s) }{\pi^{1/2} \Gamma(1-s) }$ was defined in \Cref{example:equivalence:FractionalKernel}. The first equality follows from \eqref{eq:LimOfGAtInfty}, and the second equality follows from well-known identities satisfied by the Gamma function; these calculations are in \Cref{apdx:HyperGeometricFxn}. Since $C_{1,s}$ is clearly a positive number, we have established \eqref{eq:TruncEquivKernel:Nonnegative}.
    
    Now we prove \eqref{eq:TruncEquivKernel:Integrable}. By a change of variables and by definition of the support of $G_s$,
\begin{equation*}
    \begin{split}
        \intdm{\bbR}{\big| \varrho_{\delta,s,\text{eq}}(|\eta|) \big| }{\eta} &= 2 (c_{1,s})^2 \int_0^{\infty} \left| G_s \left( \frac{\delta}{\eta} \right) \right| \eta^{2-2s} \, \frac{\rmd \eta}{\eta} \\
        &= 2 (c_{1,s})^2 \delta^{2-2s} \int_0^{\infty} \frac{| G_s(r) |}{r^{2-2s}}  \, \frac{\rmd r}{r} \\
        &= C(s,\delta) \int_{1/2}^{\infty} \frac{| G_s(r) |}{r^{3-2s}}\, \rmd r\,.
    \end{split}
\end{equation*}

Since $G_s$ is continuous, by \eqref{eq:LimOfGAt1} there exists a $\tau > 0$ small such that $|G_s(r)| \leq \frac{4}{s} |r-1|^{-s}$ for all $r \in (1-\tau,1+\tau)$. Therefore since $G_s$ is bounded
\begin{equation*}
\begin{split}
    \int_{1/2}^{\infty} \frac{| G_s(r) |}{r^{3-2s}}\, \rmd r &\leq C \int_{ (\frac{1}{2},\infty) \setminus (1-\tau,1+\tau) } \frac{1}{r^{3-2s}}\, \rmd r + C \int_{(1-\tau,1+\tau) } \frac{1}{|r-1|^{s}} \frac{1}{r^{3-2s}}\, \rmd r < \infty \,.
\end{split}
\end{equation*}
Thus \eqref{eq:TruncEquivKernel:Integrable} is proved.

Finally, \eqref{eq:TruncEquivKernel:LimAsDeltaToInf} follows from the definition \eqref{eq:Truncated:DefnOfEquivKernel}, \eqref{eq:LimOfGAtInfty}, and the second equality in \eqref{eq:TruncKernel:LimAt0}.
\end{proof}

The properties of $\varrho_{s,\delta,\text{eq}}$ just established allow us to conclude that the formula \eqref{eq:Diffusion} holds.

\end{example}


\begin{example}
The tempered fractional kernel $\varrho_{s,\text{temp}}(|\bseta|)$ satisfies the conditions of \Cref{lma:EquivalenceKernel:Singular}. Upper and lower bounds for $d=1$ are calculated in \cite{Olson2020CSRI}.
Furthermore, we can show the following equivalence of energy spaces.
\begin{theorem}
    For $s \in (0,1)$, $\alpha > 0$, there exists $C = C(d,s,\alpha)$ such that
    \begin{equation*}
    \begin{split}
        \frac{1}{C} & \iintdm{\bbR^d}{\bbR^d}{\rme^{-\alpha|\bx-\by|} \frac{|\bu(\bx)-\bu(\by)|^2}{|\bx-\by|^{d+2s}}}{\by}{\bx} \\
        &\leq \iintdm{\bbR^d}{\bbR^d}{ \varrho_{s,\text{temp},\text{eq}}(|\bx-\by|) \frac{|\bu(\bx)-\bu(\by)|^2}{|\bx-\by|^2}}{\by}{\bx} \\
        &\leq C \iintdm{\bbR^d}{\bbR^d}{\rme^{-\alpha|\bx-\by|} \frac{|\bu(\bx)-\bu(\by)|^2}{|\bx-\by|^{d+2s}}}{\by}{\bx}
    \end{split}
    \end{equation*}
    for every $\bu \in \scS(\bbR^d;\bbR^d)$.
\end{theorem}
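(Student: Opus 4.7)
The plan is to establish pointwise two-sided bounds on the tempered equivalence kernel of the form
\begin{equation*}
c_1\,\frac{\rme^{-\alpha\eta}}{\eta^{d+2s-2}}\leq \varrho_{s,\text{temp},\text{eq}}(\eta)\leq c_2\,\frac{\rme^{-\alpha\eta}}{\eta^{d+2s-2}},\qquad \eta>0,
\end{equation*}
for positive constants $c_1,c_2$ depending only on $d,s,\alpha$. With these bounds in hand, dividing by $\eta^2$ identifies the middle integrand of the theorem as pointwise equivalent to $\rme^{-\alpha|\bx-\by|}/|\bx-\by|^{d+2s}$, so that the three-way inequality follows at once upon multiplying by the nonnegative quantity $|\bu(\bx)-\bu(\by)|^2$ and integrating over $\bbR^d\times\bbR^d$.

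Substituting the tempered kernel into the representation from \Cref{lma:EquivalenceKernel:Singular} yields
\begin{equation*}
\varrho_{s,\text{temp},\text{eq}}(\eta)=\frac{1}{\eta^{d+2s-2}}\lim_{\veps,\veps'\to 0}\int\chi\,\chi\,\frac{\rme^{-\alpha\eta(|\bz|+|\be_1-\bz|)}\,(\be_1-\bz)\cdot\bz}{|\bz|^{d+s+1}\,|\be_1-\bz|^{d+s+1}}\,\rmd\bz.
\end{equation*}
Because $|\bz|+|\be_1-\bz|\geq 1$ for every $\bz\in\bbR^d$, with equality precisely on the segment $[{\bf 0},\be_1]$, the factor $\rme^{-\alpha\eta}$ is extracted from the exponential and the remaining task is to bound the residual principal value from above and below by positive constants independent of $\eta$.

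For the upper bound, I would follow the integration-by-parts argument used in the proof of \Cref{lma:EquivalenceKernel:Singular}: recognize the integrand as the dot product of gradients $-\grad_\bz\Psi_{\text{temp}}(\eta|\bz|)\cdot\grad_\bz\Psi_{\text{temp}}(\eta|\be_1-\bz|)$ with $\Psi_{\text{temp}}(r)=\int_r^\infty\rme^{-\alpha\theta}/\theta^{d+s}\,\rmd\theta$, apply Green's identity, and use the rapid exponential decay of $\Psi_{\text{temp}}$ to bound both the resulting Laplacian integral and the surface contributions at $|\bz|=\veps$ and $|\be_1-\bz|=\veps'$ uniformly in $\eta$; the mean-value control used in the proof of \Cref{lma:EquivalenceKernel:Singular} to send the boundary terms to zero carries over with the added $\rme^{-\alpha\eta}$ factor absorbed into the constant. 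For the lower bound, I would localize the integration to a neighborhood of the segment $[{\bf 0},\be_1]$ on which the integrand is strictly positive; the $d=1$ case worked out explicitly in \cite{Olson2020CSRI} generalizes to higher $d$ via elliptic coordinates adapted to the foci ${\bf 0}$ and $\be_1$, together with the principal-value cancellation at $\bz={\bf 0},\be_1$, which balances the Gaussian concentration near the segment against the $(d-1)$-dimensional tube volume.

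The main obstacle is preserving the sharp exponential rate $\rme^{-\alpha\eta}$ on both sides: crude absolute bounds on the integrand diverge because of the non-integrable singularities at $\bz={\bf 0}$ and $\bz=\be_1$, whereas crude localization of the lower bound to a tubular neighborhood of fixed width produces only the weaker rate $\rme^{-(1+O(1))\alpha\eta}$. Obtaining the correct rate requires carefully pairing the integration-by-parts cancellation with Laplace-type asymptotics around the segment $[{\bf 0},\be_1]$, as in the one-dimensional analysis of \cite{Olson2020CSRI}.
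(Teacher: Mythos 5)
The paper does not actually prove this theorem: immediately after its statement the authors write that the proof ``uses techniques that are outside the scope of this paper, and so it will be reported elsewhere.'' So there is no proof in the paper to compare against, and your proposal has to be assessed on its own merits.

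There is a genuine gap, and it is in the very first step: the proposed two-sided pointwise bound
$c_1\,\rme^{-\alpha\eta}\eta^{-(d+2s-2)}\leq\varrho_{s,\text{temp},\text{eq}}(\eta)\leq c_2\,\rme^{-\alpha\eta}\eta^{-(d+2s-2)}$
is false for large $\eta$. This can be checked by direct computation in $d=1$, using the same decomposition you outline. After separating the segment $[0,1]$ from the two tails $z<0$ and $z>1$, the principal-value combination reduces, for the quantity $\eta^{d+2s-2}\varrho_{s,\text{temp},\text{eq}}(\eta)$, to
\begin{equation*}
\rme^{-\alpha\eta}\lim_{\veps\to 0}\left[\int_{\veps}^{1-\veps}\frac{\rmd z}{z^{1+s}(1-z)^{1+s}} - 2\int_{\veps}^{\infty}\frac{\rme^{-2\alpha\eta w}}{w^{1+s}(1+w)^{1+s}}\,\rmd w\right].
\end{equation*}
Both inner integrals have a $\veps^{-s}/s$ divergence which cancels, but the finite remainder of the second integral behaves like $(2\alpha\eta)^s\Gamma(1-s)/s$ as $\eta\to\infty$ (this comes from $\int_{\veps}^{\infty}\rme^{-\lambda w}w^{-1-s}\,\rmd w=\veps^{-s}/s-\lambda^s\Gamma(1-s)/s+o_\veps(1)$). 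Hence
\begin{equation*}
\varrho_{s,\text{temp},\text{eq}}(\eta)\sim\frac{C(s,\alpha)\,\eta^{s}\,\rme^{-\alpha\eta}}{\eta^{d+2s-2}}\qquad\text{as }\eta\to\infty\,,
\end{equation*}
with an extra algebraic factor $\eta^{s}$ relative to your claimed upper bound. This is not a defect of your estimation technique but a structural feature of Laplace asymptotics along the degenerate critical manifold $[{\bf 0},\be_1]$: the pre-exponential density there has non-integrable endpoint singularities of order $|z|^{-1-s}$, and their interaction with the concentration scale $\sim 1/\eta$ produces the $\eta^{s}$ gain. You acknowledge at the end that ``crude absolute bounds diverge'' and that ``careful pairing'' is needed, but no amount of care can recover a bound that is false.

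Because the upper pointwise estimate fails, your route to the upper half of the energy inequality breaks down. The energy comparison is nevertheless true, and the natural way to see it is on the Fourier side: both quadratic forms are Fourier multipliers, the middle one being $M(\bsxi)^2$ with $M$ the (radial, nonnegative) symbol of the weighted gradient, and the right-hand one being $\mu(\bsxi)=\int\rme^{-\alpha|\bh|}|\bh|^{-d-2s}(1-\cos(\bsxi\cdot\bh))\,\rmd\bh$. Both behave like $|\bsxi|^{2}$ near zero and $|\bsxi|^{2s}$ at infinity and are positive and continuous in between, so their ratio is bounded above and below. That multiplier comparison, not a pointwise kernel comparison, is what proves the theorem, and it sidesteps the large-$\eta$ asymptotics that defeat your approach. (The lower half of your pointwise claim may well be true and could salvage the easy direction of the inequality, but the upper half cannot be.)
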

The proof uses techniques that are outside the scope of this paper, and so it will be reported elsewhere.
\end{example}

\begin{example}
The kernel defined in terms of the characteristic function
\begin{equation*}
    \varrho_{\chi,\delta}(|\bseta|) := \frac{d}{ \omega_{d-1} \delta^d} \chi_{B({\bf 0},\delta)}(|\bseta|)
\end{equation*}
satisfies \eqref{eq:KernelFullIntegrability}, and so \eqref{eq:Diffusion} holds immediately. Moreover, when $d=1$ we can find the equivalence kernel explicitly. A straightforward calculation shows that
\begin{equation*}
    \varrho_{\chi,\delta,\text{eq}}(|\eta|) = 
        \begin{cases}
            \frac{2 |\eta|}{\delta^2} \log \left( \frac{ \frac{\delta}{|\eta|} }{ |1-\frac{\delta}{|\eta|}| } \right)\,, & 0<|\eta| < 2\delta\,, \\
            0\,, & |\eta| \geq 2\delta\,.
        \end{cases}
\end{equation*}
Thus, $\varrho_{\chi,\delta,\text{eq}}$ is a nonnegative, integrable function.
\end{example}


\section{Helmholtz Decomposition for Fractional Operators}\label{sec:helmholtz}

In this section we combine the vector calculus identities proved in Section \ref{sec:identities} and the characterization of the equivalence kernel proved in Section \ref{sec:eq-kernel} to obtain a weighted fractional Helmholtz decomposition in H\"older spaces. Thus, we restrict our attention to the case of the fractional kernel $\varrho_s$ and utilize the results for H\"older spaces in Section \ref{sec:holder}.

First, we state the following result, whose proof can be obtained by using \cite[Theorem 2.8]{bucur2016some}.

\begin{theorem}\label{thm:FundSoln}
    Let $s \in (0,1)$ and $\sigma > 0$ be a sufficiently small quantity. Suppose $\bu \in \scC^{2s+\sigma}(\bbR^d;\bbR^d)$ with $d>2s$,
    and suppose $\bu$ is compactly supported. Define the constant 
    \begin{equation*}
        \kappa_{d,s} :=  \frac{ \Gamma(\frac{d}{2}-s) }{2^{2s} \pi^{d/2} \Gamma(s) }\,,
    \end{equation*}
    and define the function
    \begin{equation*}
        \Phi_s(\bsxi) :=
           \frac{\kappa_{d,s}}{|\bsxi|^{d-2s}}\,.
    \end{equation*}
    Then $\Phi_s$ is the fundamental solution of $(-\Delta)^s$ in the following sense: define the function
    \begin{equation*}
        \bv(\bx) := \Phi_s \ast \bu(\bx)\,, \qquad \bx \in \bbR^d\,.
    \end{equation*}
    Then $\bv$ belongs to $\scC^{2s+\sigma}(\bbR^d;\bbR^d)$,
    $\bv$ has the ``behavior at infinity"
    \begin{equation*}
        \Vnorm{\bv}_{L^1_{2s}(\bbR^d)} = \intdm{\bbR^d}{ \frac{|\bv(\bx)|}{ 1+|\bx|^{d+2s} } }{\bx} < \infty\,,
    \end{equation*}
    and both in the distributional sense and pointwise in $\bbR^d$ 
    \begin{equation*}
        (-\Delta)^s \bv(\bx) = \bu(\bx)\,.
    \end{equation*}
\end{theorem}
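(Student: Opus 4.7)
The plan is to verify the three conclusions separately, reserving the identity $(-\Delta)^s \bv = \bu$ for a direct appeal to \cite[Theorem 2.8]{bucur2016some}, which is designed precisely for Riesz-type convolutions against sufficiently smooth, compactly supported data.

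First, I would handle well-definedness and far-field decay of $\bv$. Since $d > 2s$, the Riesz kernel $\Phi_s(\bx) = \kappa_{d,s} |\bx|^{-(d-2s)}$ is locally integrable, and since $\bu$ is continuous with compact support, the convolution defining $\bv$ converges absolutely at every $\bx$. If $\supp \bu \subset \overline{B(0,R)}$, then for $|\bx| \geq 2R$ one has $|\bx-\by| \geq |\bx|/2$ uniformly in $\by \in \supp \bu$, which gives
\[
|\bv(\bx)| \leq 2^{d-2s} \kappa_{d,s} \Vnorm{\bu}_{L^1(\bbR^d)} |\bx|^{-(d-2s)}.
\]
Combined with the local boundedness of $\bv$, this decay yields $\bv \in L^1_{2s}(\bbR^d)$, since the integrand defining the $L^1_{2s}$ norm decays like $|\bx|^{-2d}$ at infinity and has only an integrable singularity near the origin.

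Second, for the claim $\bv \in \scC^{2s+\sigma}(\bbR^d;\bbR^d)$, I would exploit the compact support of $\bu$ to move derivatives onto $\bu$ (justified by dominated convergence using the local integrability of $\Phi_s$ together with the smoothness of $\bu$). The H\"older estimate on $\bv$, or on $\grad \bv$ when $s \geq 1/2$, then reduces to a standard Riesz potential estimate: split the convolution into a near-singular piece, bounded by the H\"older semi-norm of $\bu$ (respectively $\grad \bu$) times the integrable kernel singularity, and a far-field piece in which the smoothness of $\Phi_s$ away from the origin transfers regularity from $\bu$ to $\bv$. The argument follows the same pattern as the proof of Theorem \ref{thm:MappingPropertiesOfOperators}, with the roles of $\bu$ and $\Phi_s$ reversed.

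Third, with the regularity and decay of $\bv$ established, the function class to which \cite[Theorem 2.8]{bucur2016some} applies contains $\bv$, and this yields both the distributional and pointwise identity $(-\Delta)^s \bv = \bu$. The main obstacle I anticipate is the H\"older regularity step in the borderline regime where the exponent $2s+\sigma$ is exactly what is required for the pointwise formula of $(-\Delta)^s \bv$ to converge; the near-singular and far-field estimates must be carefully balanced so that no regularity is lost in either piece. However, this is routine Riesz potential analysis in the same spirit as the proofs already carried out in Section \ref{sec:holder}, so no essentially new technical tools are needed.
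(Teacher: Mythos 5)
Since the paper offers no proof of this theorem beyond the remark that it ``can be obtained by using \cite[Theorem 2.8]{bucur2016some},'' your proposal --- cite Bucur for the identity $(-\Delta)^s\bv = \bu$ and verify the auxiliary decay and regularity claims by direct Riesz potential estimates --- fleshes out precisely the paper's intended argument and is essentially correct. Two small imprecisions are worth flagging, neither of which affects validity. First, there is no ``integrable singularity near the origin'' in the $L^1_{2s}$ integrand: $\bv$ is in fact bounded near the origin, being a convolution of a locally integrable kernel with a bounded compactly supported function, so the integrand is simply bounded there. Second, the hypotheses of \cite[Theorem 2.8]{bucur2016some} are checked on the data $\bu$ (which lies in $\scC^{2s+\sigma}$ with compact support by assumption), not on the potential $\bv$; the phrase ``the function class to which [Bucur] applies contains $\bv$'' has the logic inverted. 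For the H\"older estimate on $\bv$, your near/far split works, but you could also bypass it: the integrand $\Phi_s(\bz)\,(\bu(\bx-\bz)-\bu(\by-\bz))$ is supported in $B(\bx, R+|\bx-\by|)\cup B(\by, R+|\bx-\by|)$ whenever $\supp\bu\subset B(0,R)$, and $\Phi_s$ has finite integral on that set, so the global H\"older bound follows directly from $[\bu]_{C^{0,\alpha}}$ together with the uniform bound on $\|\bv\|_{L^\infty}$ for the case $|\bx-\by|\geq 1$.
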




We can now state the main theorem of this section.

\begin{theorem}\label{thm:Helmholtz:Potentials}
    Let $0<s<1$. Suppose that $\bu \in \scC^{2s+\sigma}(\bbR^d;\bbR^d)$ with $d=3$ for some $\sigma > 0$ be sufficiently small.
    Suppose also that $\bu$ is compactly supported with $\supp \bu \subset B({\bf 0},R)$ for some $R >0$. Then there exist functions $\psi$ and $\bw$ belonging to $L^1_s(\bbR^d) \cap C^{0,s+\sigma}(\bbR^d)$ and $L^1_s(\bbR^d;\bbR^d) \cap C^{0,s+\sigma}(\bbR^d;\bbR^d)$ respectively such that
    \begin{equation}\label{eq:Helmholtz:Potentials}
        \bu(\bx) = \cG_s \psi(\bx) - \cC_s \bw(\bx) \quad \text{ for all } \bx \in \bbR^d\,.
    \end{equation}
\end{theorem}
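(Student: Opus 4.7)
The plan is to construct $\psi$ and $\bw$ by first inverting the fractional Laplacian against $\bu$ to obtain a vector field $\bv$ with $(-\Delta)^s \bv = \bu$, and then using the vector calculus identities of Section \ref{sec:identities} to re-express $\bu$ as $\cG_s \psi - \cC_s \bw$. The potentials will be given by $\psi = -\cD_s \bv$ and $\bw = -\cC_s \bv$.

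\textbf{Step 1 (Fundamental solution).} Since $\bu \in \scC^{2s+\sigma}(\bbR^d;\bbR^d)$ is compactly supported and $d = 3 > 2s$, I would apply Theorem \ref{thm:FundSoln} componentwise to obtain
\[
\bv(\bx) := \Phi_s \ast \bu(\bx), \qquad \bx \in \bbR^d\,,
\]
which by that theorem satisfies $\bv \in \scC^{2s+\sigma}(\bbR^d;\bbR^d) \cap L^1_{2s}(\bbR^d;\bbR^d)$ together with the pointwise identity $(-\Delta)^s \bv(\bx) = \bu(\bx)$ for all $\bx \in \bbR^d$.

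\textbf{Step 2 (Algebraic identity).} Because $\bv$ lies in $L^1_{2s} \cap \scC^{2s+\sigma}$ with $d = 3$, both Proposition \ref{prop:FractionalLaplaceIsDivGrad} and Proposition \ref{prop:CurlOfCurl:SmoothFxns} apply to $\bv$. The first gives $-\cD_s \circ \cG_s \bv = (-\Delta)^s \bv = \bu$, while the second gives $\cC_s \circ \cC_s \bv = \cG_s \circ \cD_s \bv - \cD_s \circ \cG_s \bv$. Substituting the first into the second yields the pointwise identity
\[
\bu = \cC_s \circ \cC_s \bv - \cG_s \circ \cD_s \bv = \cG_s(-\cD_s \bv) - \cC_s(-\cC_s \bv).
\]
Thus the definitions $\psi := -\cD_s \bv$ and $\bw := -\cC_s \bv$ produce the desired representation \eqref{eq:Helmholtz:Potentials}.

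\textbf{Step 3 (Regularity).} It remains to confirm that $\psi$ and $\bw$ belong to $L^1_s \cap C^{0,s+\sigma}$. Since $\bv \in L^1_{2s} \subset L^1_s$, the operators $\cD_s$ and $\cC_s$ may be applied to $\bv$ per Theorem \ref{thm:MappingPropertiesOfOperators}. I split into cases: when $s < 1/2$, $\bv \in C^{0,2s+\sigma}$ with $2s+\sigma > s$, so statement 1) of that theorem yields output in $C^{0,(2s+\sigma)-s} = C^{0,s+\sigma}$; when $s \geq 1/2$, I take $\sigma < 1 - s$ so that $\bv \in C^{1,2s+\sigma-1}$ with $s > 2s+\sigma-1$, and statement 3) then gives output in $C^{0,(2s+\sigma-1)-s+1} = C^{0,s+\sigma}$. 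Since the $C^{0,s+\sigma}$ norm controls $\Vnorm{\cdot}_{L^{\infty}(\bbR^d)}$, both $\psi$ and $\bw$ are bounded, hence automatically in $L^1_s(\bbR^d)$.

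The main subtlety is the bookkeeping in Step 3: one must choose $\sigma > 0$ small enough (depending on $s$) so that the input exponent of $\bv$ and the output exponent $s+\sigma$ both stay within the admissible ranges of Theorem \ref{thm:MappingPropertiesOfOperators}, uniformly across the gap $s = 1/2$ where the definition of $\scC^{2s+\sigma}$ changes. Once this is arranged, the algebraic derivation in Step 2 and the invertibility afforded by Theorem \ref{thm:FundSoln} yield the decomposition without further difficulty.
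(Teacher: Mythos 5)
Your proposal is correct and follows essentially the same route as the paper: invert $(-\Delta)^s$ via $\bv = \Phi_s \ast \bu$ using Theorem~\ref{thm:FundSoln}, combine the curl--curl identity (Proposition~\ref{prop:CurlOfCurl:SmoothFxns}) with the div--grad representation of the fractional Laplacian (Proposition~\ref{prop:FractionalLaplaceIsDivGrad}), and obtain the potentials by applying $\cD_s$ and $\cC_s$ to $\bv$, with regularity coming from Theorem~\ref{thm:MappingPropertiesOfOperators}.

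One small but real difference is worth flagging: your sign bookkeeping in Step~2 is actually more careful than the paper's. The identity $\cC_s\circ\cC_s = \cG_s\circ\cD_s - \cD_s\circ\cG_s$ together with $-\cD_s\circ\cG_s = (-\Delta)^s$ gives $(-\Delta)^s = \cC_s\circ\cC_s - \cG_s\circ\cD_s$, so
\begin{equation*}
\bu = (-\Delta)^s \bv = \cC_s\circ\cC_s \bv - \cG_s\circ\cD_s \bv = \cG_s(-\cD_s\bv) - \cC_s(-\cC_s\bv)\,,
\end{equation*}
which is what you wrote. The paper's intermediate equation~\eqref{Helmholtz:Potentials:Proof1} and its definitions of $\psi,\bw$ carry the opposite sign (they would give $\cG_s\circ\cD_s - \cC_s\circ\cC_s = \cD_s\circ\cG_s = -(-\Delta)^s$), though this does not affect the truth of the theorem since it asserts only existence and the sign is absorbed into $\psi$ and $\bw$. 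Your Step~3 case split at $s=1/2$ (using parts 1) versus 3) of Theorem~\ref{thm:MappingPropertiesOfOperators}) is also spelled out more explicitly than the paper's one-line appeal, and is the right way to justify the claimed $C^{0,s+\sigma}$ regularity.
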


\begin{proof}
By \Cref{thm:FundSoln}
\begin{equation*}
    \bu(\bx) = (-\Delta)^s \left[ \Phi_s \ast \bu(\bx) \right]\,.
\end{equation*}
Note that $\bu \in L^1_{2s}(\bbR^d;\bbR^d)$ since $\bu$ is continuous with compact support. By \Cref{prop:CurlOfCurl:SmoothFxns} and \Cref{prop:FractionalLaplaceIsDivGrad} we then have
\begin{equation}\label{Helmholtz:Potentials:Proof1}
    \bu(\bx) = \cG_s \circ \cD_s \left[ \Phi_s \ast \bu(\bx) \right] - \cC_s \circ \cC_s \left[ \Phi_s \ast \bu(\bx) \right]\,.
\end{equation} 

Define
\begin{equation}\label{Helmholtz:Potentials:Proof2}
    \begin{split}
        \psi(\bx) &:= \cD_s [ \Phi_s \ast \bu ] (\bx)\,, \\
        \bw(\bx) &:= \cC_s [ \Phi_s \ast \bu ] (\bx)\,.
    \end{split}
\end{equation}
Thus the formula \eqref{eq:Helmholtz:Potentials} will be established if we can show that $\psi$ and $\bw$ are well-defined functions.

To this end, note that both $\psi$ and $\bw$ are of the form $\cZ_s [\Phi_s \ast \bu]$.  These functions belong to $C^{0,s+\sigma}(\bbR^d)$ by \Cref{thm:MappingPropertiesOfOperators} since $\Phi_s \ast \bu \in \scC^{2s+\sigma}(\bbR^d)$ by \Cref{thm:FundSoln}.
Second, both $\psi$ and $\bw$ also belong to $L^1_{s}(\bbR^d;\bbR^d)$, being in $L^{\infty}(\bbR^d)$.
%
%
%
%
Thus by Theorem \ref{thm:MappingPropertiesOfOperators}, 1) the functions $\psi$ and $\bw$ are well-defined.
\end{proof}


\section*{Acknowledgments}	

M. D'Elia and M. Gulian are partially supported by the U.S. Department of Energy, Office of Advanced Scientific Computing Research under the Collaboratory on Mathematics and Physics-Informed Learning Machines for Multiscale and Multiphysics Problems (PhILMs) project (DE-SC0019453). 
They are also supported by Sandia National Laboratories (SNL). SNL is a multimission laboratory managed and operated by National Technology and Engineering Solutions of Sandia, LLC., a wholly owned subsidiary of Honeywell International, Inc., for the U.S. Department of Energy's National Nuclear Security Administration under contract {DE-NA0003525}. This paper, SAND2021-15379, describes objective technical results and analysis. Any subjective views or opinions that might be expressed in this paper do not necessarily represent the views of the U.S. Department of Energy or the United States Government.

T. Mengesha's research is supported by the NSF DMS 1910180.    

\appendix

\section{The Hypergeometric Function and Related Functions}\label{apdx:HyperGeometricFxn}

The power series defining the hypergeometric function $_2 F_1(a,b;c;z)$ converges for real parameters $a$, $b$, $c$ and complex $z$ in the unit disc -- except for the point $z = 1$ -- if $c-a-b \in (-1,0]$, which is the only range for the parameters that we are concerned about in this work. Its analytic continuation also exists everywhere except $1$ for $c-a-b \in (-1,0]$.

Using the identities \cite[Equations 15.2.4 and 15.1.8]{abramowitz1988handbook}, we get
\begin{gather}
    \label{eq:HypergeoDerivFormula1}
    \frac{d}{d z} \big( z^a \text{}_2 F_1 (a,b;a+1;z) \big) = a z^{a-1} \text{}_2 F_1 (a,b;a;z) = a z^{a-1} \text{}_2 F_1 (b,a;a;z)  = \frac{a z^{a-1} }{(1-z)^b}\,, \\
    \text{ or } \quad  \frac{d}{d z} \, \text{}_2 F_1 (a,b;a+1;z) = \frac{a}{z (1-z)^b} - \frac{a}{z} \,\text{}_2 F_1 (a,b;a+1;z)\,. \label{eq:HypergeoDerivFormula2}
\end{gather}

\begin{proof}[proof of \Cref{thm:PVofFs}]
To see \eqref{eq:AntiDer:Limitat0} when $s \neq 1/2$, use the definition of $F_s$ along with  \cite[Equation 15.3.6]{abramowitz1988handbook}:
\begin{equation*}
    \begin{split}
        F_s(\veps) - F_s(-\veps) &= \frac{1}{s(1-\veps)^s} \text{}_2 F_1 (-s,1+s;1-s;1-\veps) + \frac{1}{s \veps^s} \text{}_2 F_1 (-s,1+s;1-s;-\veps) \\
        &= \frac{1}{s(1-\veps)^s} \bigg( \frac{\Gamma(1-s) \Gamma(-s)}{\Gamma(-2s)} \text{}_2 F_1 (-s,1+s;1+s;\veps) \\
            &\qquad + \frac{1}{\veps^s} \frac{\Gamma(1-s) \Gamma(s)}{\Gamma(-s) \Gamma(1+s)} \text{}_2 F_1 (1,-2s;1-s;\veps)  \bigg) \\
            &\qquad + \frac{1}{s \veps^s} \text{}_2 F_1 (-s,1+s;1-s;-\veps)\,.
    \end{split}
\end{equation*}
Using $\Gamma(x+1) = x \Gamma(x)$, we see that $\frac{\Gamma(1-s) \Gamma(s)}{\Gamma(-s) \Gamma(1+s)} = -1$. Therefore,
\begin{equation*}
    \begin{split}
        F_s(\veps) - F_s(-\veps) &=  \frac{\Gamma(1-s) \Gamma(-s)}{s\Gamma(-2s)} \frac{1}{(1-\veps)^s} \text{}_2 F_1 (-s,1+s;1+s;\veps) \\
            &\qquad + \frac{1}{s \veps^s} \bigg( \text{}_2 F_1 (-s,1+s;1-s;-\veps) - \frac{1}{(1-\veps)^s}  \text{}_2 F_1 (1,-2s;1-s;\veps) \bigg)\,.
    \end{split}
\end{equation*}
By \cite[Equation 15.1.8]{abramowitz1988handbook} $\text{}_2 F_1 (-s,1+s;1+s;\veps) = (1-\veps)^s$, and by using the series definition of $\text{}_2 F_1$ we get
\begin{equation*}
    \begin{split}
        F_s(\veps) - F_s(-\veps) &=  \frac{\Gamma(1-s) \Gamma(-s)}{s\Gamma(-2s)} + \frac{1}{s \veps^s} \bigg( 1 - \frac{1}{(1-\veps)^s} \bigg) + O(\veps^{1-s})\,.
    \end{split}
\end{equation*}
Now, by \cite[Equation 15.1.8]{abramowitz1988handbook}
\begin{equation*}
    \begin{split}
        \frac{1}{\veps^s} \frac{(1-\veps)^s - 1}{(1-\veps)^s} = \frac{1}{\veps^s} \frac{\text{}_2 F_1 (-s,1;1;\veps) -  \text{}_2 F_1 (-s,1;1;0)}{(1-\veps)^s} = O(\veps^{1-s})\,.
    \end{split}
\end{equation*}
Therefore \eqref{eq:AntiDer:Limitat0} is proved in the case $s \neq 1/2$. When $s = 1/2$ we use the identities \cite[Equations 15.3.3 and 15.4.1]{abramowitz1988handbook} to explicitly compute the closed form of $\text{}_2 F_1$:
\begin{equation}\label{eq:HypergeometricCalc}
    \text{}_2 F_1 \left( -\frac{1}{2}, \frac{3}{2}; \frac{1}{2}; z\right) = \frac{1}{(1-z)^{1/2}}  \text{}_2 F_1 \left( -1, 1; \frac{1}{2}; z\right) = \frac{1-2z}{(1-z)^{1/2}}\,.
\end{equation}
Therefore
\begin{equation*}
    F_{\frac{1}{2}}(\veps) - F_{\frac{1}{2}}(-\veps) = \frac{2}{(1-\veps)^{1/2}} \frac{2\veps - 1}{\veps^{1/2}} + \frac{2}{\veps^{1/2}} \frac{1+2 \veps}{(1-\veps)^{1/2}} = O(\veps^{1/2})\,.
\end{equation*}

The proof of \eqref{eq:AntiDer:Limitat1} follows easily from the power series definition:
\begin{equation*}
    \begin{split}
        F_s(1+\veps) - F_s(1-\veps) = \frac{1}{s \veps^s} \text{}_2 F_1(-s,1+s;1-s;-\veps) - \frac{1}{s \veps^s} \text{}_2 F_1(-s,1+s;1-s;\veps) = O(\veps^{1-s})\,.
    \end{split}
\end{equation*}
\end{proof}

\begin{proof}[Proof of \Cref{thm:LimitsOfGs}]
    By definition $G_s$ is $C^{\infty}$ on $(0,\infty) \setminus (1-\tau,1+\tau)$. Then boundedness will follows from \eqref{eq:LimOfGAtInfty}. 
    
    To prove \eqref{eq:LimOfGAtInfty} when $s \neq 1/2$, we use the identity \cite[Equation 15.3.7]{abramowitz1988handbook} along with the value $\text{}_2 F_1(a,b;c;0) = 1$ for any $a$, $b$ and $c$:
\begin{equation*}
    \begin{split}
        \lim\limits_{x \to \infty} & G_s(x) \\
        &= \lim\limits_{x \to \infty} \frac{2}{s (x-1)^s } \text{}_2 F_1(-s,1+s;1-s;1-x) - \kappa_s \\
        &= \lim\limits_{x \to \infty} \frac{2}{s ( x-1 )^s } 
        \Bigg( \frac{\Gamma(1-s) \Gamma(1+2s)}{\Gamma(1+s) } \frac{1}{(x-1)^{-s}} \text{}_2 F_1 \left( -s,0; -2s ;\frac{1}{1-x} \right) \\
            &\qquad - \frac{\Gamma(1-s) \Gamma(-1-2s)}{\Gamma(-s) \Gamma(-2s) } \frac{1}{(x -1)^{1+s} } \text{}_2 F_1 \left( 1+s,1+2s;2+2s;\frac{1}{1-x} \right) \Bigg) - \kappa_s \\
        &= \lim\limits_{x \to \infty} \frac{2}{s} 
        \Bigg( \frac{\Gamma(1-s) \Gamma(1+2s)}{\Gamma(1+s) } \text{}_2 F_1 \left( -s,0; -2s ;0 \right) \\
            &\qquad - \frac{\Gamma(1-s) \Gamma(-1-2s)}{\Gamma(-s) \Gamma(-2s) } \frac{1}{(x-1)^{1+2s} } \text{}_2 F_1 \left( 1+s,1+2s;2+2s;0 \right) \Bigg) - \kappa_s \\
        &= \frac{2}{s} 
        \Bigg( \frac{\Gamma(1-s) \Gamma(1+2s)}{\Gamma(1+s) } \Bigg) - \kappa_s\,.
    \end{split}
\end{equation*}
To obtain \eqref{eq:LimOfGAtInfty} when $s = 1/2$ we use the the definition of the equivalence kernel for $x > 1$ and the identity \eqref{eq:HypergeometricCalc}:
\begin{equation*}
    \begin{split}
       G_{\frac{1}{2}}(x) &= \frac{4}{ (x-1)^{1/2} } \text{}_2 F_1 \left( -\frac{1}{2}, \frac{3}{2}; \frac{1}{2}; 1-x \right) - 0\\
        &=\frac{4}{ (x - 1)^{1/2} } \frac{\left( 1 - 2 \left( 1-x \right) \right)}{x^{1/2} }\,.
    \end{split}
\end{equation*}
Thus
\begin{equation*}
\begin{split}
    \lim\limits_{x \to \infty} G_{\frac{1}{2}}(x)
    = \lim\limits_{x \to \infty} 4 \left(  \left( 1 - x \right)^{1/2} + \frac{1}{ \left( 1 - x \right)^{1/2} }  \right)
    = 8 = \frac{2 \Gamma(\frac{1}{2}) \Gamma(2) }{\frac{1}{2} \Gamma(\frac{3}{2})}\,,
\end{split}
\end{equation*}
as desired.
The limit \eqref{eq:LimOfGAt1} follows from the left-hand and right-hand limits. First, since $\text{}_2 F_1 (a,b;c;0) = 1$
\begin{equation*}
    \begin{split}
        \lim\limits_{x \to 1^+ } |x-1|^s G_s(x) 
        &= \lim\limits_{x \to 1^+ } \frac{2}{s} \,\,\text{}_2 F_1 \left( -s,1+s;1-s;1-x \right) - \kappa_s  \left| x-1 \right|^{s} \\
        &= \frac{2}{s} \,\,\text{}_2 F_1 \left( -s,1+s;1-s;0 \right) = \frac{2}{s}\,.
    \end{split}
\end{equation*}
Second, 
\begin{equation*}
    \begin{split}
        \lim\limits_{x \to 1^- } |x-1|^s G_s(x)
        &= \lim\limits_{x \to 1^- } \frac{1}{s} \Bigg( \text{}_2 F_1 \left( -s,1+s;1-s;1-x \right) 
          - \frac{(1-x)^s}{x^s} \text{}_2 F_1 \left( -s,1+s;1-s;x \right)  \Bigg) \\
        &=  \frac{1}{s}  - \frac{1}{s} \lim\limits_{x \to 1^- } \frac{(1-x)^s}{x^s} \text{}_2 F_1 \left( -s,1+s;1-s;x \right)\,.
    \end{split}
\end{equation*}
We use the following limit for the hypergeometric function $\text{}_2 F_1 (a,b;c;z)$ to see the limit of this second expression: Using the transformation \cite[Equation 15.3.6]{abramowitz1988handbook} along with the the fact that $\text{}_2 F_1 (a,b;c;0) = 1$, we get
\begin{equation*}
    \lim\limits_{z \to 1^-} \frac{\text{}_2 F_1(a,b;c;z) }{(1-z)^{c-a-b}} = \frac{\Gamma(c) \Gamma(a+b-c)}{\Gamma(a) \Gamma(b)} \quad \text{ if } c-a-b < 0\,.
\end{equation*}
Therefore, since $(1-s)-(-s)-(1+s)=-s<0$
\begin{equation*}
\begin{split}
    \frac{1}{s}  - \frac{1}{s} \lim\limits_{x \to 1^- } \frac{(1-x)^s}{x^s} \text{}_2 F_1 \left( -s,1+s;1-s;x \right)
    = \frac{1}{s}  - \frac{1}{s} \frac{\Gamma(1-s) \Gamma(s)}{\Gamma(-s) \Gamma(1+s)} = \frac{2}{s}\,.
\end{split}
\end{equation*}
In the last equality we used that $\Gamma(z+1) = z \Gamma(z)$ for $z \notin \{0,-1,-2,\ldots \}$
\end{proof}

\begin{proof}[Proof of the second equality in \eqref{eq:TruncKernel:LimAt0}]
    First we assume $s \neq 1/2$ so that the relevant identities for the Gamma function are valid. Using the identity $\Gamma(x+1) = x \Gamma(x)$, 
    \begin{equation*}
    \begin{split}
         &(c_{1,s})^2 \left( \frac{2 \Gamma(1-s) \Gamma(1+2s)}{s \Gamma(1+s)} - \frac{\Gamma(1-s) \Gamma(-s)}{s \Gamma(-2s)} \right) \\
         &\quad = \frac{2^{2s}}{\pi} \left( \frac{\Gamma (1 + \frac{s}{2}) }{ \Gamma ( \frac{1-s}{2}) } \right)^2 \left( \frac{\Gamma(1-s)}{s} \right) \left( \frac{2 \Gamma(1+2s)}{\Gamma(1+s)} - \frac{ \Gamma(-s)}{\Gamma(-2s)}\right) \\
        &\quad = \frac{2^{2s}}{\pi}  \left( \frac{ \frac{s}{2} \Gamma ( \frac{s}{2}) }{ \Gamma ( \frac{1-s}{2}) } \right)^2 \left( \frac{\Gamma(1-s)}{s} \right) \left( \frac{4s \Gamma(2s)}{s\Gamma(s)} - \frac{ \Gamma(-s)}{\Gamma(-2s)}\right) \\
        &\quad = \frac{2^{2s} s }{4 \pi}  \left[ \Gamma \left(  \frac{s}{2} \right) \right]^2 \frac{1}{\Gamma(1-s)} \left( \frac{\Gamma(1-s)}{\Gamma ( \frac{1-s}{2} ) } \right)^2 \left( \frac{4 \Gamma(2s)}{\Gamma(s)} - \frac{ \Gamma(-s)}{\Gamma(-2s)}\right)\,.
    \end{split}
    \end{equation*}
    Now we use the Legendre duplication formula for the Gamma function:
    \begin{equation*}
    \begin{split}
        &\frac{2^{2s} s }{4 \pi}  \left[ \Gamma \left(  \frac{s}{2} \right) \right]^2 \frac{1}{\Gamma(1-s)} \left( \frac{\Gamma(1-s)}{\Gamma ( \frac{1-s}{2} ) } \right)^2 \left( \frac{4 \Gamma(2s)}{\Gamma(s)} - \frac{ \Gamma(-s)}{\Gamma(-2s)}\right) \\
        &\quad = \frac{2^{2s} s }{4 \pi}  \left[ \Gamma \left(  \frac{s}{2} \right) \right] ^2 \frac{1}{\Gamma(1-s)} \left( \frac{\Gamma(1-\frac{s}{2})}{2^s \pi^{1/2} } \right)^2 \left( 4 \frac{\Gamma(s+\frac{1}{2})}{2^{1-2s} \pi^{1/2} } - \frac{ 2^{1+2s} \pi^{1/2} }{\Gamma(\frac{1}{2}-s )}\right) \\
        &\quad = \frac{2^{-1+2s} s \Gamma \big( s+\frac{1}{2} \big) }{\pi^{1/2} \Gamma(1-s) }  \left( \frac{\Gamma( \frac{s}{2} ) \Gamma(1-\frac{s}{2})}{ \pi } \right)^2 \left( 1 - \frac{ \pi }{\Gamma(\frac{1}{2}-s ) \Gamma ( s+\frac{1}{2} )  }\right) \\
        &\quad = \frac{C_{1,s}}{2} \left( \frac{\Gamma( \frac{s}{2} ) \Gamma(1-\frac{s}{2})}{ \pi } \right)^2 \left( 1 - \frac{ \pi }{\Gamma(\frac{1}{2}-s ) \Gamma ( s+\frac{1}{2} )  }\right)\,.
    \end{split}
    \end{equation*}
    Finally, by Euler's reflection formula for the Gamma function and by elementary trigonometric identities,
    \begin{multline}
        \frac{C_{1,s}}{2}  \left( \frac{\Gamma( \frac{s}{2} ) \Gamma(1-\frac{s}{2})}{ \pi } \right)^2 \left( 1 - \frac{ \pi }{\Gamma(\frac{1}{2}-s ) \Gamma ( s+\frac{1}{2} )  }\right) \\
        = \frac{C_{1,s}}{2} \frac{1}{\sin^2 (\frac{\pi s}{2})} \left( 1 - \sin \left( \frac{\pi}{2} + \pi s \right) \right) = C_{1,s}\,.
    \end{multline}
    When $s = 1/2$, we can compute both sides of the equality in \eqref{eq:TruncEquivKernel:LimAsDeltaToInf} explicitly using $\Gamma(x+1) = x \Gamma(x)$, $\Gamma(1/2) = \sqrt{\pi}$ and $\Gamma(3/2) = \frac{\sqrt{\pi}}{2}$:
    \begin{equation*}
    \begin{split}
        (c_{1,\frac{1}{2}})^2 \left( \frac{2 \Gamma(\frac{1}{2}) \Gamma(2)}{\frac{1}{2} \Gamma(\frac{3}{2})} - 0 \right) = \frac{2}{\pi} \left( \frac{\Gamma(\frac{5}{4})}{\Gamma(\frac{1}{4})} \right)^2 \cdot 8 = \frac{16}{\pi} \left( \frac{\frac{1}{4}\Gamma(\frac{1}{4})}{\Gamma(\frac{1}{4})} \right)^2 = \frac{1}{\pi}\,,
    \end{split}
    \end{equation*}
    while
    \begin{equation*}
        C_{1,\frac{1}{2}} = \frac{1}{\sqrt{\pi} \Gamma(\frac{1}{2}) } = \frac{1}{\pi}\,.
    \end{equation*}
\end{proof}

\bibliographystyle{plainnat}
\bibliography{bibliography}

\end{document}